\newcommand{\ud}[0]{\,\mathrm{d}}
\newcommand{\dist}[0]{\operatorname{dist}}
\newcommand{\abs}[1]{|#1|}
\newcommand{\Babs}[1]{\Big|#1\Big|}
\newcommand{\Norm}[2]{\|#1\|_{#2}}
\newcommand{\BNorm}[2]{\Big\|#1\Big\|_{#2}}
\newcommand{\pair}[2]{\langle #1,#2 \rangle}
\newcommand{\Bpair}[2]{\Big\langle #1,#2 \Big\rangle}
\newcommand{\ave}[1]{\langle #1\rangle}
\newcommand{\bddlin}[0]{\mathscr{L}}
\newcommand{\BMO}[0]{\operatorname{BMO}}
\newcommand{\supp}[0]{\operatorname{supp}}
\newcommand{\loc}[0]{\operatorname{loc}}
\newcommand{\sign}[0]{\operatorname{sgn}}
\newcommand{\R}{\mathbb{R}}
\newcommand{\Z}{\mathbb{Z}}
\newcommand{\T}{\mathbb{T}}
\newcommand{\Exp}[0]{\mathbb{E}}
\newcommand{\eps}[0]{\varepsilon}
\def\cyr{\fontencoding{OT2}\fontfamily{wncyr}\selectfont}
\DeclareTextFontCommand{\textcyr}{\cyr}
\swapnumbers \numberwithin{equation}{section}
\theoremstyle{plain}
\newtheorem{theorem}[equation]{Theorem}
\newtheorem{proposition}[equation]{Proposition}
\newtheorem{corollary}[equation]{Corollary}
\newtheorem{lemma}[equation]{Lemma}
\theoremstyle{definition}
\newtheorem{definition}[equation]{Definition}
\theoremstyle{remark}
\newtheorem{remark}[equation]{Remark}
\begin{document}

\title{Pointwise convergence of vector-valued Fourier series}

\author[T. P. Hyt\"onen]{Tuomas P. Hyt\"onen}
\address{Department of Mathematics and Statistics, P.O.B.~68 (Gustaf H\"all\-str\"omin katu~2b), FI-00014 University of Helsinki, Finland}
\email{tuomas.hytonen@helsinki.fi}
\thanks{T.H. is supported by the European Union through the ERC Starting Grant ``Analytic--probabilistic methods for borderline singular integrals'', and by the Academy of Finland through projects 130166 and 133264.}

\author[M.T. Lacey]{Michael T. Lacey}
\address{School of Mathematics,
Georgia Institute of Technology, Atlanta GA 30332, United States}
\email{lacey@math.gatech.edu}
\thanks{M.L. is supported in part by the NSF grant 0968499, and a grant from the Simons Foundation (\#229596 to Michael Lacey). }

\date{\today}

\subjclass[2010]{42B20, 42B25}



\maketitle

\begin{abstract}
We prove a vector-valued version of Carleson's theorem: Let $Y=[X,H]_\theta$ be a complex interpolation space between a UMD space $X$ and a Hilbert space $H$. For $p\in(1,\infty)$ and $f\in L^p(\T;Y)$, the partial sums of the Fourier series of $f$ converge to $f$ pointwise almost everywhere. Apparently, all known examples of UMD spaces are of this intermediate form $Y=[X,H]_\theta$. In particular, we answer affirmatively a question of Rubio de Francia on the pointwise convergence of Fourier series of Schatten class valued functions.
\end{abstract}


\section{Introduction}

We are interested in the vector-valued extension of Carleson's celebrated theorem on pointwise convergence of Fourier series \cite{Carleson:66}, in the setting where the functions take values in an infinite-dimensional Banach space. 
Extending our recent model result for the Walsh series \cite{HL:Walsh},  we show that Carleson's theorem holds for a class of 
UMD (unconditionality of martingale differences) spaces that includes all known examples of such spaces.  

This class is referred to as \emph{intermediate UMD spaces:}
$Y$ is a complex interpolation space $Y=[X,H]_\theta$ between another UMD space $X$ and a Hilbert space $H$, where $\theta\in(0,1)$. 
This includes all UMD lattices \cite[Corollary on p.~216]{RdF:LNM}. It also includes the Schatten ideals $C_p$, $p\in(1,\infty)$, a principal example of non-lattice UMD spaces. It is an open question whether every UMD space is of this form.

Denote the Fourier transform and its partial sums of, say, a Schwartz function $f$ by
\begin{equation*}
	 \hat f (\xi ) := \int_{\R} f (x) e ^{-2 \pi i x \cdot \xi } \ud x,\qquad
	S_{m,n} f(x):= \int _{m } ^{n} \hat f (\xi ) e ^{2 \pi i x \cdot \xi }\ud\xi.
\end{equation*}
Then $S_{m,n}f(x)$ is also given by the convolution of $f$ with
\begin{equation*}
  x\mapsto e^{i\pi x(m+n)}\frac{\sin(\pi x(n-m))}{\pi x},
\end{equation*} 
which belongs to $L^q(\R)$ for all $q>1$. Thus, by H\"older's inequality, the map $f\mapsto S_{m,n}f(x)$ is well-defined from $L^p(\R;Y)$ to $Y$ for $p\in(1,\infty)$ and any Banach space $Y$. Our main result reads as follows:

\begin{theorem}\label{thm:main}
Let $Y$ be an intermediate UMD space, $p\in(1,\infty)$, and $f\in L^p( \mathbb R ;Y)$. Then
\begin{equation*}
	S_{m,n} f(x) \to f(x)
\end{equation*}
as $m\to-\infty,n\to\infty$ for a.e.~$x\in \R $.
In fact,  the  maximal partial sum operator $S^*$,
\begin{equation*}
  S^*f(x):=\sup_{\substack{m,n\in\R\\ m<n}}\abs{S_{m,n} f(x)},
\end{equation*}
is bounded from $L^p( \mathbb R ;Y)$ to $L^p( \mathbb R )$.
\end{theorem}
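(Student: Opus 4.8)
\medskip
\noindent\textbf{Proof proposal.}
A standard Fourier computation expresses $S_{m,n}$ as a difference of two modulated Hilbert transforms, $S_{m,n}=\tfrac{1}{2i}\bigl(M_nHM_{-n}-M_mHM_{-m}\bigr)$, where $H$ is the Hilbert transform and $M_\xi$ is modulation by $e^{2\pi i\xi x}$; hence $S^*f$ is pointwise dominated by the (sublinear) Carleson operator
\[
  \mathcal Cf(x):=\sup_{\xi\in\R}\Bnorm{\,\mathrm{p.v.}\!\int_\R e^{2\pi i\xi(x-y)}\frac{f(y)}{x-y}\ud y\,}{Y}.
\]
It thus suffices to prove the a priori estimate $\Norm{\mathcal Cf}{L^p(\R)}\lesssim\Norm{f}{L^p(\R;Y)}$; the almost everywhere convergence follows from it in the usual way, since $\mathcal S(\R)\otimes Y$ is dense in $L^p(\R;Y)$ and $S_{m,n}g\to g$ everywhere for such $g$. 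Linearising the supremum by a measurable frequency $N\colon\R\to\R$, the task reduces to bounding the linear operators $\mathcal C_Nf(x):=\mathrm{p.v.}\!\int_\R e^{2\pi iN(x)(x-y)}(x-y)^{-1}f(y)\ud y$ on $L^p(\R;Y)$ uniformly in $N$. Note that $Y=[X,H]_\theta$ is itself a UMD space, so the $Y$-valued singular-integral and martingale toolkit is available.

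To $\mathcal C_N$ I would apply a Lacey--Thiele time--frequency analysis, following the strategy of the Walsh model \cite{HL:Walsh} but now with Schwartz wave packets. After discretising the phase plane, $\mathcal C_N$ is dominated by a model sum $\sum_{P\in\mathbf P}\pair{f}{\phi_P}\psi_P$ over a collection $\mathbf P$ of tiles $P=I_P\times\omega_P$, only those with $N(x)\in\omega_P$ contributing at $x$. One groups $\mathbf P$ into trees and performs a double stopping time with respect to a \emph{density} (the fraction of $I_P$ on which $N$ lands in $\omega_P$) and a \emph{$Y$-valued size} $\sigma$ of a tree, the latter measured through the $L^p(\R;Y)$ norm of the tree's own model sum, equivalently through the $\rbound$-bound of the coefficient family $\{\pair{f}{\phi_P}\}_{P\in T}$. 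The \emph{single tree estimate} --- one tree's contribution is at most $C\,\sigma\,\delta^{\eps}\abs{I_T}^{1/p}$ --- can then be proved directly for $Y$-valued $f$: a single-tree operator is a modulated Calder\'on--Zygmund operator, bounded on $L^p(\R;Y)$ by the UMD property, with Doob's inequality and $\rbound$-boundedness of the wave-packet families playing the role that square functions play in the scalar theory.

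The one genuinely new point is the \emph{tree-counting} step, the substitute for the $L^2$ energy inequality $\sum_T\abs{I_T}\sigma(T)^2\lesssim\Norm{f}{L^2}^2$, which has no analogue in a general Banach space. Here I would use the interpolation identity $L^p(\R;Y)=[L^p(\R;X),L^p(\R;H)]_\theta$ and estimate the model sum one size level at a time: write $\mathbf P=\bigsqcup_{n\ge0}\mathbf P_n$ with $\mathbf P_n$ the tiles lying in trees of size $\sim 2^{-n}\sigma(\mathbf P)$, so that, by the single tree estimate, it suffices to bound the $\mathbf P_n$-piece on $L^p(\R;Y)$ by a constant summable in $n$. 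On the Hilbert endpoint orthogonality is available, so the classical energy/counting argument applies and yields a geometric gain in $n$; on the UMD endpoint, where orthogonality is lost, one falls back on the single tree estimate together with the Carleson packing of the tree tops produced by the stopping time, which still bounds the $\mathbf P_n$-piece but now with a loss in $n$. The heart of the matter is to organise these two endpoint estimates so that, after complex interpolation, the Hilbert gain dominates the UMD loss for every fixed $\theta\in(0,1)$ --- for instance by arranging the UMD loss to be an arbitrarily small power $2^{\eps n}$ at the expense of the implied constant, or the Hilbert gain an arbitrarily large power $2^{-Mn}$ at the expense of finer counting. Once the $\mathbf P_n$-pieces are summable, summing also over the density levels gives the uniform bound on $\mathcal C_N$, and the theorem follows.

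The principal obstacle is precisely this interpolation balance: one must control the UMD-side loss and the Hilbert-side gain sharply enough, and uniformly in the linearising frequency $N$, that their product decays geometrically in $n$ for every $\theta\in(0,1)$. Compared with the Walsh model, the new technical burdens are the choice of a $Y$-valued size functional that is simultaneously adapted to the UMD single tree estimate and compatible with Hilbert-space orthogonality, and the control of Schwartz tails within the non-lacunary tree combinatorics.
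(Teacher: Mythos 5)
Your high-level strategy is essentially the one in the paper --- linearize the Carleson operator, run a Lacey--Thiele phase-plane decomposition with trees, prove the single-tree bound via the UMD Calder\'on--Zygmund theory on $L^p(\R;Y)$, and recover the lost orthogonality in the tree-counting step by complex interpolation between a Hilbert endpoint and a UMD endpoint. However, the tree-counting step is exactly where you leave the argument as a promissory note, and the specific difficulties you would meet are not addressed.

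First, the size stopping time is run in $Y$, so the level sets $\mathbf P_n$ are defined by the $Y$-valued size $\sigma$. You then propose to estimate the $\mathbf P_n$-piece separately at the $H$- and $X$-endpoints and interpolate, but the endpoints cannot ``see'' the $Y$-size that defines $\mathbf P_n$; a careful interpolation argument must be phrased so that the tree selection is decoupled from the estimate being interpolated. The paper handles this by isolating a single inequality --- the \emph{Fourier tile-type} estimate \eqref{eq:FtileType} --- which is stated for an \emph{arbitrary} finite family of trees $\mathscr T$ satisfying the geometric disjointness property \eqref{eq:disjProp}; the stopping time (in $Y$) just produces an admissible $\mathscr T$, and the tile-type inequality is then applied. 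Phrasing the interpolation at the level of pieces $\mathbf P_n$ rather than at the level of the operator $A_{\mathscr T}$ for arbitrary admissible $\mathscr T$ leaves this point unresolved.

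Second, the two endpoint statements you propose are not quite right. The Hilbert endpoint is \emph{not} clean $L^2$ orthogonality: for a family $\mathscr T$ with \eqref{eq:disjProp} one only has a Calder\'on-type weak-$L^2$ bound, and converting that into an $\ell^2(\mathscr T;L^2)$ bound costs a logarithm, which the paper packages as an extra term $C\bigl(\Norm{f}{\infty}\sqrt{\sum_{\mathbb T}|I_{\mathbb T}|}\bigr)^{1-\alpha}\Norm{f}{2}^{\alpha}$ (Propositions \ref{prop:HilbertBasicEst}--\ref{prop:HilbertTileType}). This is precisely why the domain of $A_{\mathscr T}$ at the Hilbert endpoint has to be $L^2\cap L^\infty$, and why the interpolation needs the nontrivial identification $[L^\infty(X_0),\,L^2(X_1)\cap L^\infty(X_1)]_\theta=L^q(X_\theta)\cap L^\infty(X_\theta)$ proved in Lemma \ref{lem:interpolation}; it is not a matter of ``arranging the Hilbert gain to be an arbitrarily large power.'' Conversely, the UMD endpoint is not a ``small loss $2^{\eps n}$''; it is the \emph{uniform} $L^\infty\!\to\ell^\infty(\mathscr T;\BMO)$ bound for the single-tree Calder\'on--Zygmund operators, with no $n$-dependence at all --- the exponent trade-off you invoke does not appear.

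Third, even granting the tile-type inequality, the resulting Energy Lemma and the density/energy iteration give the restricted weak-type bound only in the range $p>q/\alpha$ (Proposition \ref{prop:largePCarleson}). Your proposal says nothing about extending to all $p\in(1,\infty)$. For small $p$ the paper needs a genuinely new ingredient: the improved, localized energy bound $\operatorname{energy}(\mathbb P)\lesssim\inf_{I_P}Mf$ (Corollary \ref{cor:impEnergy}), proved through vector-valued square function estimates and Figiel's $T(1)$ theorem, together with the standard major-subset device for generalized restricted weak-type interpolation. Without these, your argument would only prove the theorem for $p$ large.
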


A necessary condition for the conclusions of Theorem~\ref{thm:main} is that $Y$ be a UMD space; see Proposition~\ref{prop:nec} for details. By transference, Theorem~\ref{thm:main} also implies an analogous statement for Fourier series of $f\in L^p(\T;Y)$; see Theorem~\ref{thm:T}.

Until now, this theorem was only known in UMD lattices, where it is due to Rubio de Francia \cite{RdF:LNM}. His proof relies on the original Carleson theorem, which is applied pointwise, using a representation of the lattice as a function space. A weaker statement on the behaviour of the partial sums was recently obtained by Parcet, Soria and Xu \cite{PSX} in general UMD spaces, where they proved that $S_{-n,n}f(x)=o(\log\log n)$ almost everywhere.

Rubio de Francia raised the question \cite[Problem 4 on p.~220]{RdF:LNM} whether Carleson's theorem extends to functions taking values in a Schatten ideal $C_p$ for $p\in(1,\infty)$. Since these spaces are special cases of intermediate UMD spaces, Theorem~\ref{thm:main} answers this in the affirmative. 

The proof is based on an elaboration of our method from \cite{HL:Walsh} where, adapting  the phase plane analysis \`a la Lacey--Thiele \cite{LT:MRL}, we treated the model case of Walsh--Fourier series. The key point of this strategy is that a critical quasi-orthogonality estimate, which we call \emph{tile-type}, can be obtained in the intermediate UMD spaces by interpolating between strong orthogonality estimates in a Hilbert space and a weaker version available in general UMD spaces.

The main difficulty in extending this strategy to the trigonometric Fourier setting is the lack, even in a Hilbert space, of a clean orthogonality estimate that would serve as an end-point of our interpolation. Because of this, we need to look more carefully at the phase-plane quasi-orthogonality estimates even in the classical $L^2$ setting, and we formulate a version of these bounds, which appears to be new. This leads to our notion of \emph{Fourier tile-type} of a Banach space, which is shown to be amenable to interpolation. We can then establish this property for all the intermediate UMD spaces as in Theorem~\ref{thm:main}. Once these critical quasi-orthogonality bounds are available, it is possible to once again adopt the general proof scheme of Lacey--Thiele \cite{LT:MRL}; we also frequently borrow from \cite{Thiele:book}. While the Fourier tile-type property is applied exactly once in the proof, UMD is used more frequently to control various Calder\'on--Zygmund type operators produced in the analysis.

\medskip

The plan of the paper is as follows. We first discuss the necessity of UMD for the main result, and show how to pass from the Fourier integrals on $\R$ to Fourier series on $\T$. This section is independent of the the rest of the paper. Turning to the proof of Theorem~\ref{thm:main}, we first recall some relevant notions from the phase plane analysis in two sections, and then turn to a thorough study of our new notion of Fourier tile-type in the next three sections. The last three sections of the paper then give a proof of Theorem~\ref{thm:main}, first in the somewhat simpler case of a large $p$ and finally, after developing some improved square-function estimates, in the case of a general $p>1$.

%

\section{Around the main theorem}

Theorem~\ref{thm:main} asserts the $L^p$ boundedness of the maximal partial sum operator $S^*$, thus \emph{a fortiori} the uniform  boundedness on $L^p(\R;Y)$ of the individual partial sum operators $S_{m,n}$. It is well-known that this is equivalent to the $L^p(\R;Y)$-boundedness of the Hilbert transform, and hence to the UMD property of $Y$. The following result addresses the necessity of the UMD property for the qualitative part of Theorem~\ref{thm:main}.

\begin{proposition}\label{prop:nec}
Let $Y$ be a Banach space, $p\in(1,\infty)$, and suppose that for every $f\in L^p(\R;Y)$, we have
\begin{equation*}
  S_{m,n}f(x)\to f(x)
\end{equation*}
as $m\to-\infty$, $n\to\infty$ for almost every $x\in\R$. Then $Y$ is a UMD space.
\end{proposition}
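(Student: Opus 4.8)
\emph{Strategy.} The plan is to extract from the hypothesis a weak-type bound for the Hilbert transform $H$ on $L^p(\R;Y)$, which by Calder\'on--Zygmund theory and the self-duality of the UMD class forces $Y$ to be UMD. Two elementary facts are used throughout: the kernel formula $S_{m,n}f=K_{m,n}*f$ with $K_{m,n}(y)=(e^{2\pi iyn}-e^{2\pi iym})/(2\pi iy)$, whence $\abs{K_{m,n}}=\abs{K_{0,n-m}}$ and $\Norm{K_{m,n}}{L^{p'}(\R)}=c_p(n-m)^{1/p}$; and the identity $S_{m,n}=\tfrac i2(M_mHM_{-m}-M_nHM_{-n})$, where $M_ag(x)=e^{2\pi iax}g(x)$.

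\emph{The argument.} For $f\in L^p(\R;Y)$ put $\widetilde Sf(x):=\sup\{\abs{S_{m,n}f(x)}:m<-1<1<n\}$. \emph{(i) $\widetilde Sf<\infty$ a.e.} At a.e.\ $x$ choose $\rho(x)<\infty$ with $\abs{S_{m,n}f(x)}\le\abs{f(x)}+1$ whenever $m<-\rho(x)$ and $n>\rho(x)$; every other admissible pair leads, upon writing $S_{m,n}$ as a difference of two operators of that controlled form, to a \emph{bounded-frequency} truncation $S_{a,b}$ with $b-a\le 2\rho(x)$, and for the latter $\abs{S_{a,b}f(x)}\le\Norm{K_{a,b}}{L^{p'}}\Norm{f}{L^p}=c_p(b-a)^{1/p}\Norm{f}{L^p}$. (The asymmetry is essential here: the symmetric operator $\sup_N\abs{S_{-N,N}f}$ is also a.e.\ finite, but symmetric truncations cannot encode $H$.) \emph{(ii) Weak-type bound.} The operator $\widetilde S$ is sublinear and translation-invariant, and each $S_{m,n}$ maps $L^p(\R;Y)$ boundedly into $L^\infty(\R;Y)$; by the Stein--Nikishin maximal principle, (i) upgrades to $\Norm{\widetilde Sf}{L^{r,\infty}(\R)}\le C\Norm{f}{L^p(\R;Y)}$ with $r:=\min(p,2)$. \emph{(iii) Extraction of $H$.} For $f$ with $\widehat f\in C_c^\infty$ and $\abs m,\abs n$ large enough that $\supp\widehat f\subset[m,n]$, the multiplier of $S_{-2,n}-S_{m,-2}$ equals $\operatorname{sgn}(\xi+2)$ on $\supp\widehat f$, so $S_{-2,n}f-S_{m,-2}f=iM_{-2}HM_2f$ identically; bounding the left-hand side in $L^{r,\infty}$ by $C'\Norm{\widetilde Sf}{L^{r,\infty}}$ and unwinding the modulations gives $\Norm{H(M_2f)}{L^{r,\infty}(\R;Y)}\lesssim\Norm{M_2f}{L^p(\R;Y)}$. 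Since $\{M_2f:\widehat f\in C_c^\infty\}$ is dense in $L^p(\R;Y)$ and $L^{r,\infty}$ is complete, $H$ extends to a bounded operator $L^p(\R;Y)\to L^{r,\infty}(\R;Y)$, and hence $Y$ is UMD.

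\emph{Main obstacle.} The substantive ingredient is step (ii): one needs a form of the maximal principle valid for sublinear, translation-invariant operators between \emph{vector-valued} $L^p$-spaces. For $p\le 2$ this is Stein's classical theorem and yields $r=p$, after which (iii) lands in the familiar weak-type $(p,p)$ setting. For $p>2$ the bare principle (Nikishin's factorization) gives only a weighted estimate with exponent $2$, and one must invoke translation invariance --- most transparently after transferring the problem to the compact group $\T$, where averaging a translate of the weight over the group produces a constant --- to recover an unweighted bound, with the final passage to UMD then requiring a little extra care. By contrast, the a.e.\ finiteness in (i) (which rests solely on the uniform pointwise bound for bounded-frequency truncations) and the differencing computation in (iii) are routine.
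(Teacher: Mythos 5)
Your overall strategy (extract a maximal inequality from the a.e.\ finiteness and then recover $H$) is the same as the paper's, but your step (ii) has a genuine gap, and it is precisely the point where the paper works hardest.

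The operator $\widetilde S$ with the hard cutoffs $m<-1<1<n$ is translation-invariant but \emph{not} dilation-invariant: under $f\mapsto f(\delta\cdot)$ the constraint becomes $m<-\delta<\delta<n$, so the supremum changes. On the non-compact group $\R$, translation invariance alone is not enough to remove the Nikishin weight --- the standard averaging produces a divergent integral --- and the passage ``transfer to $\T$ and average'' that you gesture at is itself non-trivial: a.e.\ finiteness of partial Fourier \emph{integrals} on $\R$ does not automatically yield a translation-invariant maximal operator on the compact group $\T$ to which Stein's theorem applies. The paper circumvents this by a more careful choice of maximal operator. It works with $\sup_{n>0}\lvert S_{0,n}f\rvert$, introduces a smooth average $T_r f(x)=\int_0^\infty S_{0,n}f(x)\,\phi(n/r)\,\tfrac{\ud n}{r}$ and sets $T^*f=\sup_{r>0}\lvert T_rf\rvert$; because the supremum is over \emph{all} $r>0$, this $T^*$ \emph{is} both translation- and dilation-invariant. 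Even then, one more step is needed: the closed graph theorem gives only $T^*:L^p(\R;Y)\to L^0(\R)$, and the paper then uses the Calder\'on--Zygmund kernel bounds for $T_r$ to downgrade to $T^*:L^1(\R;Y)\to L^0(\R)$ before invoking the vector-valued version of the de Guzm\'an/Garc\'ia-Cuerva--Rubio de Francia principle (Mart\'inez--Torrea--Xu). This downgrade is not cosmetic. In the vector-valued setting the principle at exponent $p$ requires $Y$ to have Rademacher type $p$, a hypothesis that is not available here for $p>1$; passing to $p=1$ makes the type assumption vacuous and yields the weak-$(1,1)$ bound for any Banach space. Your step (ii), as written, silently assumes both a dilation structure you do not have and a type condition on $Y$ that you are not allowed to use.

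Two smaller remarks. In step (iii), the pair $(m,-2)$ does not satisfy your admissibility constraint ($-2\not>1$), so $\lvert S_{m,-2}f(x)\rvert$ is not directly dominated by $\widetilde S f(x)$; this is salvageable by writing $S_{m,-2}=S_{m,n}-S_{-2,n}$ with $n$ large, but it should be said. In step (i), the splitting into a controlled tail and a bounded-frequency truncation is fine and matches the paper's preliminary reduction.
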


\begin{proof}
The proof resembles other similar arguments found in the literature, and we will be sketchy in some details. We have particularly borrowed from Torrea and Zhang \cite[proof of Theorem~D]{TZ}.

Consider an $x\in\R$ where the convergence takes place.
Then, for some $N$, we have $\abs{S_{m,n}f(x)}\leq\abs{f(x)}+1$ for all $m\leq -N,n\geq N$. On the other hand, by estimating the $L^{p'}$ norm of the convolution kernel $e^{i\pi x(n+m)}\sin(\pi x(n-m))/\pi x$ of $S_{m,n}$, we find that $\abs{S_{m,n}f(x)}\leq C_N\Norm{f}{p'}$ for $\abs{m},\abs{n}\leq N$. Thus $\abs{S_{0,n}f(x)}\leq C_N\Norm{f}{p'}$ if $n\in[0,N]$, and
\begin{equation*}
  \abs{S_{0,n}f(x)}
  =\abs{S_{-N,n}f(x)-S_{-N,0}f(x)}
  \leq(\abs{f(x)}+1)+C_N\Norm{f}{p'}
\end{equation*}
for $n>N$. Hence, at almost every $x\in\R$,
\begin{equation*}
  \sup_{n>0}\abs{S_{0,n}f(x)}<\infty.
\end{equation*}
We then pick an auxiliary $0\leq\phi\in C^1_c(\R_+)$ of integral one. By averaging, it follows that also
\begin{equation*}
  T^*f(x):=\sup_{r>0}\abs{T_r f(x)}:=\sup_{r>0}\Babs{\int_0^\infty S_{0,n}f(x)\phi(\frac{n}{r})\frac{\ud n}{r}}<\infty.
\end{equation*}

Let $L^0(\R)$ be the space of all measurable functions of $\R$, equipped with the topology of local convergence in measure: $f_j\to 0$ in $L^0(\R)$ if and only if $\abs{E\cap\{\abs{f_j}>\lambda\}}\to 0$ for all measurable sets $E$ with $\abs{E}<\infty$ and all $\lambda>0$. Clearly `all measurable sets $E$' can be replaced by all balls $B(0,k)$, $k=1,2,3,\ldots$. Then $L^0(\R)$ is metrizable, for example with
\begin{equation*}
  d(f,g):=\sum_{k=1}^\infty 2^{-k}\int_{B(0,k)}\min\{1,\abs{f-g}\}\ud x.
\end{equation*}
By above, we have that $T^*$ maps $L^p(\R;Y)$ into $L^0(\R)$, and by the closed graph theorem, we can deduce that $T^*$ is bounded between these spaces.

The operator $T_r$ has convolution kernel
\begin{equation*}
   K_r(x)=\int_{\R_+} e^{i\pi nx}\frac{\sin(\pi nx)}{\pi x}\phi(\frac{n}{r})\frac{\ud n}{r}
   =\int_{\sign(x)\R_+}e^{i\pi ur}\frac{\sin(\pi ur)}{\pi x}\phi(\frac{u}{x})\frac{\ud u}{x}.
\end{equation*}
Using the uniform boundedness of $e^{i\pi ur}\sin(\pi ur)$ and differentiating in $x$ under the integral, it is easy to check the bounds
\begin{equation*}
  \abs{K_r(x)}\leq\frac{C}{\abs{x}},\qquad\abs{K_r'(x)}\leq\frac{C}{\abs{x}^2}.
\end{equation*}
Then, by adapting the usual Calder\'on--Zygmund decomposition of $f\in L^1(\R;Y)$, applying the $L^p(\R;Y)$-to-$L^0(\R)$ boundedness to the good part, and the kernel bounds to the bad part, we can deduce that $T^*$ is bounded from $L^1(\R;Y)$ into $L^0(\R)$; cf. a similar argument in \cite[proof of Theorem~C]{TZ}.

The continuous sublinear operator $T^*:L^1(\R;Y)\to L^0(\R)$ is both translation and dilation invariant. Hence, in fact, it is bounded from $L^1(\R;Y)$ to $L^{1,\infty}(\R)$. For $Y=\R$, this is a classical result found for example in \cite[Corollary VI.2.9]{GCRdF}. That it extends to the Banach space -valued context has been observed by Mart\'inez et al. \cite[Lemma 7.3]{MTX}. They formula the result in $L^p(\R;Y)$ under the assumption that $Y$ has Rademacher-type $p$. Here we use the $p=1$ case, which is valid for every Banach space.

Observe that if the Fourier transform of $f\in L^1(\R;Y)$ is compactly supported, then for large enough $n$, the function $S_{0,n}f(x)$ is just $\int_0^\infty\hat{f}(\xi)e^{i2\pi x\cdot\xi}\ud\xi$, the projection of $f$ onto positive frequencies. By the density in $L^1(\R;Y)$ of such functions, this projection, and hence the Hilbert transform, is bounded from $L^1(\R;Y)$ to $L^{1,\infty}(\R;Y)$. This implies the boundedness on $L^p(\R;Y)$ for $p\in(1,\infty)$, and hence the UMD property.
\end{proof}

Let us next check that Theorem~\ref{thm:main} implies its periodic analogue for the Fourier coefficients and their partial sums
\begin{equation*}
  \hat f (k) := \int_{\T} f (x) e ^{-2 \pi i x \cdot k} \ud x,\qquad
  s_{m,n} f(x):=\sum_{k=m}^n \hat{f}(k)e ^{2 \pi i x \cdot k}.
\end{equation*}

\begin{theorem}\label{thm:T}
Let $Y$ be an intermediate UMD space, $p\in(1,\infty)$, and $f\in L^p( \T ;Y)$. Then
\begin{equation*}
  s_{m,n} f(x)\to f(x)
\end{equation*}
as $m\to-\infty,n\to\infty$ for a.e.~$x\in \T $.
In fact,  the  maximal partial sum operator $s^*$,
\begin{equation*}
  s^*f(x):=\sup_{\substack{m,n\in\Z\\ m<n}}\abs{s_{m,n} f(x)},
\end{equation*}
is bounded from $L^p( \mathbb T ;Y)$ to $L^p( \mathbb T)$.
\end{theorem}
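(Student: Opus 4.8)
The plan is to deduce Theorem~\ref{thm:T} from Theorem~\ref{thm:main} by a standard transference argument, turning a statement about Fourier integrals on $\R$ into one about Fourier series on $\T$. The starting observation is that for a trigonometric polynomial, or more generally for a function with only finitely many nonzero Fourier coefficients, the partial sum $s_{m,n}f$ is exactly the restriction to $\T$ of $S_{m+1/2,n+1/2}g$ for a suitable band-limited function $g$ on $\R$ that agrees with $f$ on a large interval; more precisely, one wants to compare $\sum_{k=m}^n \hat f(k)e^{2\pi i kx}$ with $\int_{m+1/2}^{n+1/2}\hat g(\xi)e^{2\pi i x\xi}\ud\xi$. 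The half-integer cut-off is chosen so that the Dirichlet-type kernel for the integral picks out precisely the integer frequencies in $[m,n]$ when tested against a periodic function. Thus the discrete maximal operator $s^*$ is, after this identification, controlled pointwise (on a fundamental domain) by the continuous maximal operator $S^*$ applied to a localized, smoothly truncated version of $f$.

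The key steps, in order, are as follows. First I would reduce to trigonometric polynomials $f$, which are dense in $L^p(\T;Y)$, and record that once the maximal inequality $\Norm{s^*f}{L^p(\T)}\lesssim\Norm{f}{L^p(\T;Y)}$ is established on this dense class, the a.e.\ convergence for general $f\in L^p(\T;Y)$ follows by the usual maximal-function-plus-density argument (the convergence being trivial on the dense class). Second, given a trigonometric polynomial $f$ of degree $\leq N$, I would fix a smooth cut-off $\psi\in C_c^\infty(\R)$ with $\psi\equiv 1$ on $[-R,R]$ and supported in $[-2R,2R]$, for $R$ large, and set $g:=(\psi\cdot f)$, viewing $f$ as a $1$-periodic function on $\R$. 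Third, I would verify the pointwise identity (or near-identity, with an error that is small as $R\to\infty$) relating $s_{m,n}f(x)$ for $x\in\T=[0,1)$ to $S_{m+1/2,n+1/2}g(x)$, using that $\hat g$ is, near the integer $k\in[-N,N]$, a bump of mass $\hat f(k)$ plus tails that are negligible because $\psi$ is smooth; this is where one checks that the truncation points $m\pm\tfrac12$ avoid the frequencies of $f$ and that the Dirichlet kernel $\sin(\pi x)/(\pi x)$-type object integrates correctly against the periodization. Fourth, I would take suprema: $s^*f(x)\leq S^*g(x)+(\text{error})$ for $x\in[0,1)$, integrate the $p$-th power over $[0,1)$, apply Theorem~\ref{thm:main} to get $\Norm{S^*g}{L^p(\R)}\lesssim\Norm{g}{L^p(\R;Y)}\lesssim R^{1/p}\Norm{f}{L^p(\T;Y)}$, and then remove the spurious factor $R^{1/p}$ by the standard device of translating: summing the estimate over unit translates of the cut-off, or equivalently averaging over the position of the interval $[-R,R]$, exploits translation invariance of both $s^*$ and the problem to replace the crude bound by the clean one $\Norm{s^*f}{L^p(\T)}\lesssim\Norm{f}{L^p(\T;Y)}$ with constant independent of $R$ and $N$.

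The main obstacle is the bookkeeping in the third step: making the comparison between the discrete partial sum $s_{m,n}$ and the continuous partial sum $S_{m+1/2,n+1/2}$ of the localized function genuinely pointwise and uniform in $m,n$, while keeping the error term under control uniformly in the (unbounded) truncation parameters. One has to be careful that $S^*$ ranges over \emph{all} real $m<n$, not just half-integers, so strictly one bounds $s^*f(x)$ by $S^*g(x)$ directly once the identity is in place for the relevant cut-offs; the error analysis must use the decay of $\hat g - \sum_k \hat f(k)(\text{bump at }k)$, which comes from the smoothness of $\psi$, together with the fact that a periodic function's "Fourier integral" is a sum of Dirac-like masses at the integers smeared out by $\hat\psi$. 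Since $\hat\psi$ is Schwartz, these smears have rapidly decaying tails, and choosing $R$ large (hence $\hat\psi$ concentrated) makes the cross terms summable and small; the translation-averaging in the fourth step is routine once this is quantified. The remaining details — density of trigonometric polynomials, the closed-graph-free passage from the maximal inequality to a.e.\ convergence — are entirely standard and I would treat them briefly.
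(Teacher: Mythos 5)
Your proposal is correct in spirit and lands in the same family of ideas (transference from $\R$ to $\T$), but it follows a genuinely different route from the paper's. The paper packages the entire two-parameter family of partial-sum operators into a single operator-valued Fourier multiplier
\begin{equation*}
  M(\xi)=\bigl(1_{[m-1/3,\,n+1/3]}(\xi)\bigr)_{m,n\in\Z},
\end{equation*}
viewed as taking values in $\bddlin(Y,\ell^\infty(\Z^2;Y))$; then $|T_Mf(x)|_{\ell^\infty(\Z^2;Y)}\leq S^*f(x)$, so Theorem~\ref{thm:main} shows $T_M$ is bounded on $L^p$, and since $M$ is continuous at every integer, the standard (operator-valued) multiplier transference theorem immediately yields boundedness of the periodic analogue $\tilde T_M$, whose $\ell^\infty$-norm is exactly $s^*f$. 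Your approach, by contrast, proves the needed instance of the transference theorem from scratch: truncate the periodic $f$ with a smooth cut-off $\psi$ of width $\sim R$, compare $s_{m,n}f$ pointwise with $S_{m\mp 1/2,\,n+1/2}(\psi f)$ with an error coming from the Schwartz tails of $\hat\psi$, apply Theorem~\ref{thm:main} to $\psi f$, and recover the correct constant by the standard translation-averaging (or equivalently $R\to\infty$) argument that divides out the extra factor of $R^{1/p}$. Both work; the paper's version is shorter because it defers the cut-off/error/averaging machinery to the cited transference result, whereas yours redoes that machinery explicitly and is therefore more elementary but requires more bookkeeping (exactly the bookkeeping you flag as the ``main obstacle'').

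Two small points worth tightening. First, the lower cut-off must sit \emph{below} $m$, not above it: with your first-mentioned choice of $m+1/2$ the frequency $k=m$ would be excluded, so you want $m-1/2$ (or any $m-\delta$ with $0<\delta<1$, as in the paper's $m-1/3$); you later write $m\pm\tfrac12$ which suggests you intended this, so it is only a notational slip. Second, in step four, be careful that the error in $s^*f(x)\le S^*(\psi f)(x)+\mathrm{err}(x)$ must be uniform in $(m,n)\in\Z^2$ before taking the supremum, and the resulting error function must have $L^p([0,1))$-norm that is $o(1)$ after the translation averaging; the rapid decay of $\hat\psi$ gives this, but it is precisely the part that the paper's appeal to the general transference theorem lets one avoid spelling out.
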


\begin{proof}
This result can be reduced to Theorem~\ref{thm:main} by a standard transference of Fourier multipliers. Indeed, consider the $\ell^\infty(\Z^2)$-valued function
\begin{equation*}
  M(\xi):=(1_{[m-1/3,n+1/3]}(\xi))_{m,n\in\Z},
\end{equation*}
which we identify with an operator in $\bddlin(Y,\ell^2(\Z^2;Y))$. Let
\begin{equation*}
  T_M f(x):=\int_{\R}M(\xi)\hat{f}(\xi)e^{i2\pi x\cdot\xi}\ud\xi
  =(S_{m-1/3,n+1/3}f(x))_{m,n\in\Z}
\end{equation*}
be the associated Fourier multiplier operator. Clearly $\abs{T_M f(x)}_{\ell^\infty(\Z^2;Y)}\leq S^* f(x)$, and hence by Theorem~\ref{thm:main}, $T_M$ is bounded from $L^p(\R;Y)$ to $L^p(\R:\ell^\infty(\Z^2;Y))$.

Since $M$ is continuous at the integer points, it follows by standard transference that its restriction to $\Z$ defines a multiplier of Fourier series $\tilde{T}_M$ from $L^p(\T;Y)$ to $L^p(\T;\ell^\infty(\Z^2;Y))$. But
\begin{equation*}
  \abs{\tilde{T}_Mf(x)}_{\ell^\infty(\Z^2;X)}=\abs{(s_{m,n}f(x))_{m,n\in\Z}}_{\ell^\infty(\Z^2;X)}=s^*f(x),
\end{equation*}
and hence $s^*:L^p(\T;Y)\to L^p(\T)$, as claimed.
\end{proof}

\section{Basic notions of time-frequency analysis}

We work in the phase plane $\R\times\R$, where the horizontal axis is thought of as \emph{time} and the vertical axis as \emph{frequency}. A dyadic grid
\begin{equation*}
	\mathscr D = \mathscr D_{t,r}= \{ t+ [n 2 ^{k}r, (n+1) 2 ^{k}r) \;:\; n,k \in \mathbb Z \}
\end{equation*}
is any translation and dilation of the standard dyadic grid, where $ t \in \mathbb R $, $r>0$ are unspecified, and could change from time to time. We use one dyadic grid $\mathscr D=\mathscr D_{t,r}$ for the time axis, and another, $\mathscr{D}'=\mathscr{D}_{t',1/r}$, for the frequency. Observe that the translation parameters $t,t'$ are independent, while the dilation parameters are reciprocals of each other.

A \emph{tile} is a dyadic rectangle $P\subset\R\times\R$ of area $1$, i.e.,
\begin{equation*}
	P=I\times\omega \in\mathscr{D}\times\mathscr{D}',\quad \lvert  I\rvert \cdot \lvert  \omega \rvert=1  .
\end{equation*}
Let $ c (\omega )$ be the center of $ \omega $, and define $ \omega  _{u}= \omega \cap [ c (\omega ) , \infty )$ to 
be the upper half of $ \omega $, and $ P _{u} = I \times \omega $ is the upper half of $ P$. Define the lower half 
of $ \omega$ and $ P$ to be $ \omega _{d }$ and $ P _{d }$ in a similar fashion.  
A tile is written as $ P=I_P \times \omega _P$.    The rectangles  $ P _{d } ,  P _{u}$ will be referred to as half-tiles. 

We define the modulation, translation,  and dilation operators by 
\begin{align*}
	\textup{Mod} _{\lambda } \phi (x) &:= \operatorname e ^{2 \pi i x \cdot \lambda } \phi (x) \,, \qquad \lambda \in \mathbb R \,,
	\\
	T _{t} \phi (x) &:= \phi (x-t)\,, \qquad t\in \mathbb R \,, 
	\\
	\textup{Dil} _{\delta } \phi (x) &:= \delta ^{-1/p} \phi (x/ \delta )\,, \qquad \delta >0\,,\ 0< p < \infty \,. 
\end{align*}
Fix a Schwarz function $ \varphi $ with $ \widehat \varphi \ge0 $, supported in $ [-1/20,1/20]$ so that 
\begin{equation*}
	\sum_{ n \in \mathbb Z } \widehat \varphi (\xi + n/20) ^2  \equiv C \,.  
\end{equation*}
It is known that in this case, the functions $ \{ T _{20n} \varphi \;:\; n \in \mathbb Z \}$ are pairwise orthogonal. 
Indeed, 
\begin{align*}
	\langle T _{20n} \varphi , \varphi \rangle & = \langle  \textup{Mod} _{-20n} \widehat \varphi , \widehat \varphi  \rangle
	\\
	&= \sum_{j \in \mathbb Z } \int _{0} ^{1/20} \operatorname e ^{-40 \pi in x}  \lvert \widehat \varphi  (\xi + j/20)\rvert\ud \xi 
	\\&
	= \int _{0} ^{1/20} \operatorname e ^{-40 \pi in x}   \ud \xi=0 \,. 
\end{align*}
This is a convenience to us at a couple of points below.

To each tile $ P= I \times \omega $, associate a function, a wave packet, given by 
\begin{equation*}
	\varphi_{P} := \textup{Mod} _{c (\omega _{d })} T _{c (I)} \textup{Dil} _{\lvert  I\rvert } ^2 \varphi 
\end{equation*}
Observe that ${\widehat \varphi} _{P}   $ is supported on the interval $ \omega _{P d }$. 

Consider the operator 
\begin{equation} \label{e.CN}
	C_{N (x)} f (x) := \sum_{ P \textup{ is a tile}}  \langle f , \varphi_{P} \rangle \varphi_{P} \mathbf 1_{\omega  _{ P, u}} (N (x)) \,,
\end{equation}
where $ N \;:\; \mathbb R \mapsto \mathbb R $ is a measurable function.   Frequently, the role 
of the measurable function $ N $ is suppressed, and we write $ C_N = C$, and refer to this as the Carleson operator. 

As is explained in \cite[Section 2]{LT:MRL}, to prove Theorem~\ref{thm:main}, it suffices to obtain the estimate
\begin{equation*}
	\Norm{Cf}{L^p(\R_+;X)}\lesssim\Norm{f}{L^p(\R_+;X)}\,, \qquad p\in(1,\infty)\,. 
\end{equation*}
First, we make the standard reduction: by interpolation, it suffices to prove the bound
\begin{equation*}
  \Norm{Cf}{L^{p,\infty}(\R_+;X)}\lesssim\Norm{f}{L^{p,1}(\R_+;X)}
\end{equation*}
for all $p\in(1,\infty)$, which by duality and a well-known description of the Lorentz space $L^{p,1}$ is equivalent to
\begin{equation*}
  \abs{\pair{Cf}{g}}\lesssim\abs{F}^{1/p}\abs{E}^{1/p'}
\end{equation*}
for all $f\in L^\infty(F;X)$, $g\in  L^\infty(E;X^*)$ bounded by one, and all bounded measurable sets $E$ and $F$. Yet another standard reduction, which we recall later, will be used for smaller values of $p>1$.


Central to the proof is the concept of Fourier tile-type, which we discuss after recalling the notion of trees in the following section.

%

\section{Trees}

Henceforth, we will be working with different collections of tiles $ \mathbb P$. It is always assumed that 
for $ P \neq P'\in \mathbb P$, if $ \omega _P = \omega _{P'}$, then $ I_P= I _{P'}+ 20 n \lvert  I_P\rvert $
for some integer $ n$.  This implies that $ \langle \phi _P , \phi _{P'}  \rangle=0$.

A partial order (among dyadic rectangles of equal area) is defined by
\begin{equation*}
\begin{split}
  P\leq P'\ &\overset{\operatorname{def}}{\Leftrightarrow}\ I_P\subseteq I_{P'}\ \text{and}\ \omega_{P}\supseteq\omega_{P'} \\
  &\Leftrightarrow\ P_d\leq P_d'\ \text{or}\ 
     P_u\leq P_u'.
\end{split}
\end{equation*}
We also define
\begin{equation*}
   P\leq_j P'\ \overset{\operatorname{def}}{\Leftrightarrow}\ P_j\leq P_j',\qquad j\in\{d,u\}.
\end{equation*}

A tree $\mathbb{T}$ is a collection of tiles $P$ for which there exists a top tile $T$ (not necessarily an element of $\mathbb{T}$) such that
\begin{equation*}
  P\leq T\qquad\forall P\in\mathbb{T}.
\end{equation*}
Down-trees and up-trees are defined similarly by replacing $\leq$ by $\leq_d$ or $\leq_u$.

The top of a tree $\mathbb{T}$ is not uniquely determined; however, there exists a unique minimal top time interval, which we sometimes denote by $I_{\mathbb{T}}$. Thus $\mathbb{T}$ has at least one top $T$ with $I_T=I_{\mathbb{T}}$, and every other top $T'$ satisfies $I_{T'}\supseteq I_{\mathbb{T}}$.

For a tree $\mathbb{T}$, we define an associated operator by
\begin{equation*}
	A _{\mathbb T} f := \sum_{P\in \mathbb T} \langle f, \phi _P \rangle \phi _P 
\end{equation*}

Basic facts concerning up-trees are collected here. 
\begin{proposition}\label{prop:czo} 
\begin{enumerate}
\item 	Let $ \mathbb T$ be an up-tree. Then the operator
\begin{equation*}
  \textup{Mod} _{-c (\omega _{\mathbb T})} A _{\mathbb T} \textup{Mod} _{c (\omega _{\mathbb T})}
\end{equation*}
is  an $ L ^2 $-bounded Calder\'on-Zygmund operator. 

\item  Let $ 1<p< \infty $.  The UMD property of $X$ is characterized by the property 
	that for any up-tree $ \mathbb T$ the operator $ A _{\mathbb T}$ is bounded from $ L ^{p} (\mathbb R ; X)$ to itself. 
	
\item Let $ X$ be UMD.  For any up-tree and choice of unimodular constants $ \varepsilon _P$, there holds 
	\begin{equation*}
		\Bigl\lVert  \sum_{P\in \mathbb T}  \varepsilon _P \langle f, \phi _P \rangle \phi _P  \Bigr\rVert_{L ^{p} (\mathbb R ; X)} 
		\lesssim 
				\Bigl\lVert  \sum_{P\in \mathbb T} \langle f, \phi _P \rangle \phi _P  \Bigr\rVert_{L ^{p} (\mathbb R ; X)} 
\end{equation*}
\end{enumerate}
\end{proposition}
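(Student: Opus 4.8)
The plan is to treat the three items in turn, the crucial elementary input being that \emph{the wave packets of an up-tree are pairwise orthogonal}. Indeed, for $P\neq P'$ in $\mathbb{T}$ either $\omega_P=\omega_{P'}$, and then $\pair{\phi_P}{\phi_{P'}}=0$ by the standing separation hypothesis on the time intervals, or $\omega_P\neq\omega_{P'}$; since the frequency intervals of an up-tree are nested through their upper halves $\omega_{Pu}$, a short computation shows that the $\tfrac{1}{20\abs{I_P}}$-neighbourhood of $c(\omega_{Pd})$ carrying $\widehat{\phi_P}$ and the corresponding neighbourhood for $P'$ are disjoint up to a null set, so again $\pair{\phi_P}{\phi_{P'}}=0$. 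As $\Norm{\phi_P}{2}=\Norm{\varphi}{2}$ for every $P$, Bessel's inequality yields, for any unimodular $(\eps_P)$,
\begin{equation*}
  \BNorm{\sum_{P\in\mathbb{T}}\eps_P\pair{f}{\phi_P}\phi_P}{2}^2=\Norm{\varphi}{2}^2\sum_{P\in\mathbb{T}}\abs{\pair{f}{\phi_P}}^2\leq\Norm{\varphi}{2}^4\Norm{f}{2}^2,
\end{equation*}
so $A_\mathbb{T}$ and each signed variant $A^\eps_\mathbb{T}f:=\sum_{P\in\mathbb{T}}\eps_P\pair{f}{\phi_P}\phi_P$ is bounded on $L^2(\R)$ with a constant depending only on $\varphi$.

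For part (1) it remains to control the kernel of $B:=\textup{Mod}_{-c(\omega_{\mathbb{T}})}A_\mathbb{T}\textup{Mod}_{c(\omega_{\mathbb{T}})}=\sum_{P\in\mathbb{T}}\pair{\cdot}{\psi_P}\psi_P$, where $\psi_P:=\textup{Mod}_{-c(\omega_{\mathbb{T}})}\phi_P=\textup{Mod}_{\lambda_P}T_{c(I_P)}\textup{Dil}_{\abs{I_P}}^2\varphi$ with $\lambda_P:=c(\omega_{Pd})-c(\omega_{\mathbb{T}})$. The up-tree hypothesis is used exactly here: since $c(\omega_{\mathbb{T}})\in\omega_{Pu}$ for every $P\in\mathbb{T}$, one gets $\abs{\lambda_P}\lesssim\abs{I_P}^{-1}$, so each $\psi_P$ is a Schwartz bump adapted to $I_P$ carrying $O(1)$ oscillations, with Schwartz seminorms controlled by those of $\varphi$. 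For the kernel $K(x,y)=\sum_{P\in\mathbb{T}}\psi_P(x)\overline{\psi_P(y)}$ I would fix $x,y$ and group the tiles by their common scale $s=\abs{I_P}$; at each scale the intervals $I_P$ are pairwise separated by integer multiples of $20s$, so the rapid decay of $\varphi$ reduces $\sum_{\abs{I_P}=s}\abs{\psi_P(x)}\,\abs{\psi_P(y)}$ to essentially a single bump term of size $\lesssim s^{-1}$, with rapid extra decay once $s\ll\abs{x-y}$ or once $x,y$ lie far from the tiles. Summing the ensuing geometric series over the dyadic scales $s$ gives $\abs{K(x,y)}\lesssim\abs{x-y}^{-1}$, and differentiating under the sum costs one extra $s^{-1}$ per scale and yields $\abs{\partial_x K(x,y)}+\abs{\partial_y K(x,y)}\lesssim\abs{x-y}^{-2}$. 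Combined with the $L^2$-bound above, $B$ is an $L^2$-bounded Calder\'on--Zygmund operator, with constants independent of $\mathbb{T}$; the scale-by-scale bookkeeping here is the most tedious, though routine, piece.

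For part (2), the ``if'' direction is then immediate: a Calder\'on--Zygmund operator with a \emph{scalar} kernel, bounded on $L^2(\R)$, extends to a bounded operator on $L^p(\R;X)$ for every $p\in(1,\infty)$ whenever $X$ is UMD (vector-valued Calder\'on--Zygmund theory, e.g.\ Figiel, or Rubio de Francia--Ruiz--Torrea), and conjugating back by the $L^p(\R;X)$-isometry $\textup{Mod}_{c(\omega_{\mathbb{T}})}$ transfers this to $A_\mathbb{T}$, uniformly over up-trees. For the converse I would test the assumed boundedness on the up-trees $\mathbb{T}(I_0,\xi_0)$ consisting of all tiles $P$ with $I_P\subseteq I_0$ and $\xi_0\in\omega_{Pu}$: after the demodulation of part (1) these are, modulo a smooth spatial truncation to $I_0$ and controllable contributions of the boundary tiles, frequency-lacunary Littlewood--Paley-type operators; letting $I_0\uparrow\R$ while simultaneously shrinking $\xi_0\to0$ along a dilation-compatible family keeps the operator norms bounded, and in the limit one recovers the projection onto the negative frequencies, hence the Hilbert transform, so $X$ must be UMD. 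This reduction---organizing the error terms and arranging the uniformity---is the step I expect to be the main obstacle, and it follows the pattern of Proposition~\ref{prop:nec}.

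Part (3) follows by reapplying parts (1)--(2) to $A^\eps_\mathbb{T}$: its demodulation $\sum_{P\in\mathbb{T}}\eps_P\pair{\cdot}{\psi_P}\psi_P$ has exactly the kernel bounds of $B$ (the unimodular $\eps_P$ leave all absolute values unchanged) and the $L^2$-bound of the first paragraph, hence is an $L^2$-bounded Calder\'on--Zygmund operator with constants uniform in $\eps$, so $A^\eps_\mathbb{T}$ is bounded on $L^p(\R;X)$ uniformly in $\eps$ when $X$ is UMD. Finally, pairwise orthogonality gives $\pair{A_\mathbb{T}f}{\phi_{P'}}=\Norm{\varphi}{2}^2\pair{f}{\phi_{P'}}$ for every $P'\in\mathbb{T}$, so $A^\eps_\mathbb{T}f=\Norm{\varphi}{2}^{-2}A^\eps_\mathbb{T}(A_\mathbb{T}f)$; together with the uniform $L^p(\R;X)$-bound this gives $\Norm{A^\eps_\mathbb{T}f}{L^p(\R;X)}\lesssim\Norm{A_\mathbb{T}f}{L^p(\R;X)}$, the asserted inequality.
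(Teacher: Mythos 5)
Your proposal follows essentially the same route as the paper: pairwise orthogonality of the up-tree wave packets gives the $L^2$ bound, the demodulated kernel is a standard Calder\'on--Zygmund kernel (the paper calls this ``routine''), part~(2) uses vector-valued CZ theory for the ``if'' direction and an averaging of up-tree operators to recover the Hilbert transform for the converse (which the paper delegates to \cite[Section 2]{LT:MRL}), and part~(3) follows by applying the CZ boundedness of the signed operator $B_{\mathbb T}$ to the identity $B_{\mathbb T}A_{\mathbb T}f = \Norm{\varphi}{2}^{2}B_{\mathbb T}f$ from orthogonality. Your treatment is somewhat more explicit than the paper's (detailing the scale-by-scale kernel estimate, sketching the $\mathbb{T}(I_0,\xi_0)$ limiting construction, and carrying the normalization constant $\Norm{\varphi}{2}^{-2}$ that the paper suppresses), but the argument is the same.
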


\begin{proof}
	(1) 
	It suffices to consider the case of a choice of dyadic grid and up-tree $ \mathbb T$ with $ 0= c (\omega _{\mathbb T})$. 
	Write the operator below in kernel form. 
	\begin{align*}
		A _{\mathbb T}f &= \sum_{P\in \mathbb T}  \langle  f  , \varphi_{P} \rangle 
\varphi_{P}  
\\
&= \int  \sum_{P\in \mathbb T}   \varphi_{P}  (x) \overline { \varphi_{P} (y)} f (y)\; dy 
=: \int K (x,y) f (y) \; dy 
\end{align*}
It is routine to check that $ K (x,y) $ is a Calder\'on-Zygmund kernel  with the size and smoothness conditions, 
namely 
\begin{equation*}
	\bigl\lvert D ^{j} K (x,u)\bigr\rvert \lesssim \lvert  x-y\rvert ^{-j-1}\,, \qquad x\neq y, \ j\in \mathbb N \,.  
\end{equation*}
(This also holds for down-trees.)  To prove the $ L ^2 $ bound for up-trees, note that the functions $ \{\phi _P \;:\; P\in \mathbb T\}$ 
are pairwise orthogonal, and have constant $ L ^2 $-norm. 

\medskip

(2)  If $ X$ is UMD, then $ A _{\mathbb T}$ is an  Calder\'on-Zygmund operator conjugated by an exponential. 
Hence, the $ A _{\mathbb T}$ is bounded on $ L ^{p} (\mathbb R ; X)$, with constant depending only upon $ X$ and $ p$. 
In the reverse direction,  the argument in \cite[Section 2]{LT:MRL} shows that 
the operators $ \Pi_{\xi } $ with  $ ( \Pi  _{\xi } f) = \mathbf 1_{(- \infty , \xi )} \widehat f $, are the appropriate averages of 
the up-tree operators $ A _{\mathbb T}$. 
Hence, the Hilbert transform is bounded on $ L ^{p} (\mathbb R ; X)$, showing that $ X$ must be UMD. 

\medskip 

(3) Let $ B _{\mathbb T} f =  \sum_{P\in \mathbb T}  \varepsilon _P \langle f, \phi _P \rangle \phi _P $, and assume that 
$ 0 \in \omega _{T,u}$.  This is a Calder\'on-Zygmund operator, hence it acts boundedly on $ L ^{p} (\mathbb R ; X)$. 
Since the functions $ \{\varphi _P \;:\; P\in \mathbb T\}$ 
are pairwise orthogonal, 
\begin{equation*}
\lVert  B _{\mathbb T} f \rVert_{L ^{p} (\mathbb R ; X)} 
= \lVert  B _{\mathbb T} A _{\mathbb T} f \rVert_{L ^{p} (\mathbb R ; X)} 
\lesssim \lVert  A _{\mathbb T} f \rVert_{L ^{p} (\mathbb R ; X)} \,. \qedhere
\end{equation*}
\end{proof}


\section{The Fourier tile-type of a Banach space: generalities}


In this section we introduce the notion of quasi-orthogonality in the phase plane that will be the key to our control of the Carleson operator. We also make some simple observations about this notion. Our main examples of spaces in which this quasi-orthogonality holds will be studied in the subsequent sections.

\begin{definition}
We say that a Banach space $X$ has \emph{Fourier tile-type} $(q,\alpha)$ if there is a constant $C$ such that
\begin{equation}\label{eq:FtileType}
\begin{split}
  \Big(\sum_{\mathbb{T}\in\mathscr{T}} &\BNorm{\sum_{P\in\mathbb{T}}\pair{f}{\phi_P}\phi_P}{L^q(\R;X)}^q\Big)^{1/q} \\
  &\leq C\Norm{f}{L^q(\R;X)}+C\Big(\Norm{f}{L^\infty(\R;X)}\Big[\sum_{\mathbb{T}\in\mathscr{T}}\abs{I_{\mathbb{T}}}\Big]^{1/q}\Big)^{1-\alpha}\Norm{f}{L^q(\R;X)}^{\alpha}
\end{split}
\end{equation}
whenever $f\in L^q(\R;X)\cap L^\infty(\R;X)$ and $\mathscr{T}$ is a finite collection of finite trees with the following property:
\begin{equation}\label{eq:disjProp}
  \textup{If $P\in\mathbb{T}\in\mathscr{T}$ and $P'\in\mathbb{T}'\in\mathscr{T}$ satisfy $\omega_P\subseteq\omega_{P_d'}$, then $I_{P'}\cap I_{\mathbb{T}}=\varnothing$.}
\end{equation}
We say that a Banach space has Fourier tile-type $q$ if it has Fourier tile-type $(q,\alpha)$ for every $\alpha\in(0,1)$.
\end{definition}

\begin{remark}
For a fixed $q$, the condition of Fourier tile-type $(q,\alpha)$ becomes more restrictive with increasing $\alpha$, so the point of Fourier tile-type $q$ is that we can take $\alpha$ as close to $1$ as we like.


In the Walsh model, we defined \cite{HL:Walsh} \emph{tile-type} (which could be more precisely called the \emph{Walsh tile-type}) by the requirement that \eqref{eq:FtileType} hold with the Walsh wave packets $w_P$ in place of $\phi_P$, and with $\alpha=1$. With the methods of the paper at hand, one can easily see that all our results proved under the assumption of Walsh tile-type remain valid with a similarly relaxed notion as here.
\end{remark}

\begin{lemma}
Property~\eqref{eq:disjProp} implies that all down-halves $P_d$ of all
\begin{equation*}
   P\in\bigcup_{\mathbb{T}\in\mathscr{T}}\mathbb{T} =:\mathbb{P}
\end{equation*}
are pairwise disjoint.
\end{lemma}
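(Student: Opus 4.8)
The claim is that property~\eqref{eq:disjProp} forces the down-halves $P_d$, as $P$ ranges over $\mathbb{P}=\bigcup_{\mathbb{T}\in\mathscr{T}}\mathbb{T}$, to be pairwise disjoint as subsets of the phase plane. Two half-tiles $P_d=I_P\times\omega_{P_d}$ and $P_d'=I_{P'}\times\omega_{P_d'}$ intersect if and only if $I_P\cap I_{P'}\neq\varnothing$ and $\omega_{P_d}\cap\omega_{P_d'}\neq\varnothing$. Since the frequency intervals $\omega_{P_d},\omega_{P_d'}$ are dyadic (elements of $\mathscr{D}'$), if they intersect then one is contained in the other; say $\omega_{P_d}\subseteq\omega_{P_d'}$ (the other case is symmetric). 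So I would reduce to showing: if $P,P'\in\mathbb{P}$, $P\neq P'$, $I_P\cap I_{P'}\neq\varnothing$ and $\omega_{P_d}\subseteq\omega_{P_d'}$, then a contradiction ensues.

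The key step is to manufacture the hypothesis of~\eqref{eq:disjProp}, which asks for a relation of the form $\omega_{Q}\subseteq\omega_{Q'_d}$ between a \emph{full} frequency interval $\omega_Q$ and a \emph{lower half} $\omega_{Q'_d}$. From $\omega_{P_d}\subseteq\omega_{P_d'}$ I would pass to the full intervals: since $\omega_{P_d}$ is the lower half of the dyadic interval $\omega_P$, and $\omega_{P_d}\subseteq\omega_{P_d'}\subsetneq\omega_{P'}$, one has $|\omega_P|=2|\omega_{P_d}|\le|\omega_{P_d'}|=\tfrac12|\omega_{P'}|$, hence $\omega_P\subseteq\omega_{P'_d}$ or $\omega_P\subseteq\omega_{P'_u}$; but $\omega_P$ meets $\omega_{P_d'}$ (both contain $\omega_{P_d}$), and dyadic intervals of the smaller size $|\omega_P|\le|\omega_{P'_d}|$ contained in $\omega_{P'}$ and meeting $\omega_{P'_d}$ must lie inside $\omega_{P'_d}$. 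So $\omega_P\subseteq\omega_{P'_d}$. Now~\eqref{eq:disjProp}, applied with $P$ in place of ``$P$'' and $P'$ in place of ``$P'$'', yields $I_{P'}\cap I_{\mathbb{T}}=\varnothing$, where $\mathbb{T}\ni P$ is the tree containing $P$. But $I_P\subseteq I_{\mathbb{T}}$ (every tile in a tree sits below the top, so its time interval is contained in $I_{\mathbb{T}}$), and by assumption $I_P\cap I_{P'}\neq\varnothing$, so $I_{\mathbb{T}}\cap I_{P'}\neq\varnothing$ --- a contradiction. Therefore no two distinct $P_d,P_d'$ intersect.

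One boundary case needs care: the argument above implicitly used $\omega_{P_d}\subsetneq\omega_{P_d'}$ to get strict size comparison; if instead $\omega_{P_d}=\omega_{P_d'}$, then $P$ and $P'$ have the same lower half frequency interval, hence the same $\omega$ (since $\omega$ is the unique dyadic parent of $\omega_d$ on the correct side), and then the standing assumption on collections of tiles recalled at the start of Section~4 --- namely that $P\neq P'$ with $\omega_P=\omega_{P'}$ forces $I_P=I_{P'}+20n|I_P|$ for some nonzero integer $n$ --- gives $I_P\cap I_{P'}=\varnothing$, again excluding intersection of the half-tiles. I should also handle the degenerate possibility that $P$ and $P'$ lie in the \emph{same} tree, but the derivation of the contradiction never used that the two trees were distinct, so nothing changes.

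The only genuine subtlety --- the step I expect to be the main obstacle to write cleanly --- is the passage from $\omega_{P_d}\subseteq\omega_{P_d'}$ to $\omega_P\subseteq\omega_{P'_d}$, i.e.\ bookkeeping of which dyadic interval is the lower half of which; this is entirely elementary once one draws the picture, but it requires invoking that all the $\omega$'s live in a common dyadic grid $\mathscr{D}'$ so that nesting-or-disjointness applies, and keeping track of the factor-of-two size relations. Everything else is immediate from the definitions of tree and of the partial order.
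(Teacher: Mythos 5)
Your proof is correct and takes essentially the same approach as the paper: split according to whether $\omega_{P_d}$ and $\omega_{P_d'}$ are disjoint, equal, or strictly nested, and in the strictly nested case observe that $\omega_P\subseteq\omega_{P'_d}$, so that~\eqref{eq:disjProp} forces the time intervals apart. The only cosmetic differences are that you phrase it as a contradiction argument rather than a case analysis, and in the equality case you invoke the standing $20n$-separation assumption on the tile collection, whereas the paper just uses that distinct dyadic intervals of equal length are disjoint — both are fine, the latter being slightly more economical.
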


\begin{proof}
Let $P\in\mathbb{T}$, $P'\in\mathbb{T}'$ be two different tiles in $\mathbb{P}$. If $\omega_{P_d}\cap\omega_{P_d'}=\varnothing$, then $P_d\cap P_d'=\varnothing$. So assume that $\omega_{P_d}\cap\omega_{P_d'}\neq\varnothing$, and then for example $\omega_{P_d}\subseteq\omega_{P_d'}$. If $\omega_{P_d}=\omega_{P_d'}$ for two tiles, then $I_P$ and $I_{P'}$ (which have equal size) are either equal (which cannot be in our case, since this would imply that $P=P'$) or disjoint (in which case $P\cap P'=\varnothing$). We are left with the case that $\omega_{P_d}\subsetneq\omega_{P_d'}$, and then $\omega_{P}\subsetneq \omega_{P_d'}$. But this implies that $I_{P'}\cap I_{\mathbb{T}}=\varnothing$ by \eqref{eq:disjProp}, hence $I_{P'}\cap I_P=\varnothing$ and thus $P\cap P'=\varnothing$ also in this final case.
\end{proof}

\begin{lemma}\label{lem:disjProp}
Suppose that $\mathscr{T}$ satisfies \eqref{eq:disjProp}, and fix a $P\in\mathbb{T}\in\mathscr{T}$. Then among the tiles $P'\in\mathbb{P}$ with $\omega_{P_d'}\supseteq\omega_P$, the time intervals $I_{P'}$ are pairwise disjoint and contained in $I_{\mathbb{T}}^c$.
\end{lemma}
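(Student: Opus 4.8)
The plan is to read off both assertions directly from property \eqref{eq:disjProp}, the preceding lemma on the disjointness of down-halves, and the basic fact that dyadic intervals are nested or disjoint.

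First I would handle the containment in $I_{\mathbb{T}}^c$. Fix $P'\in\mathbb{P}$, say with $P'\in\mathbb{T}'\in\mathscr{T}$, such that $\omega_{P'_d}\supseteq\omega_P$. Then the pair $(P,\mathbb{T})$ and $(P',\mathbb{T}')$ satisfies the hypothesis $\omega_P\subseteq\omega_{P'_d}$ of \eqref{eq:disjProp} (and automatically $P\neq P'$, since $\omega_{P'_d}\subsetneq\omega_{P'}$), whence $I_{P'}\cap I_{\mathbb{T}}=\varnothing$, that is, $I_{P'}\subseteq I_{\mathbb{T}}^c$.

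Next I would prove pairwise disjointness of the time intervals. Let $P'\in\mathbb{T}'$ and $P''\in\mathbb{T}''$ be two distinct tiles in $\mathbb{P}$ with $\omega_{P'_d}\supseteq\omega_P$ and $\omega_{P''_d}\supseteq\omega_P$. Since $\omega_{P'_d}$ and $\omega_{P''_d}$ are dyadic intervals sharing the nonempty interval $\omega_P$, one is contained in the other; I may assume $\omega_{P'_d}\subseteq\omega_{P''_d}$. If this inclusion is strict, then the dyadic parent $\omega_{P'}$ of $\omega_{P'_d}$ still satisfies $\omega_{P'}\subseteq\omega_{P''_d}$, so \eqref{eq:disjProp} applied to $(P',\mathbb{T}')$ and $(P'',\mathbb{T}'')$ yields $I_{P''}\cap I_{\mathbb{T}'}=\varnothing$; since $P'\in\mathbb{T}'$ gives $I_{P'}\subseteq I_{\mathbb{T}'}$ (choosing a top $T'$ of $\mathbb{T}'$ with $I_{T'}=I_{\mathbb{T}'}$, we have $P'\leq T'$, hence $I_{P'}\subseteq I_{T'}$), it follows that $I_{P'}\cap I_{P''}=\varnothing$. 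If instead $\omega_{P'_d}=\omega_{P''_d}$, then by the preceding lemma the down-halves $P'_d$ and $P''_d$ are disjoint, and as their frequency components coincide and are nonempty, this forces $I_{P'}\cap I_{P''}=\varnothing$.

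The argument is purely combinatorial and I do not anticipate a serious obstacle; the only step needing mild care is the case split according to whether the nesting $\omega_{P'_d}\subseteq\omega_{P''_d}$ is proper, together with the (standard) observation that every $P'$ in a tree $\mathbb{T}'$ has $I_{P'}\subseteq I_{\mathbb{T}'}$.
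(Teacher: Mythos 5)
Your proof is correct, and the containment $I_{P'}\subseteq I_{\mathbb{T}}^c$ is handled exactly as in the paper, by a direct application of \eqref{eq:disjProp}. For pairwise disjointness, though, the paper's argument is a single step and needs no case split: since $\omega_{P'_d}$ and $\omega_{P''_d}$ both contain $\omega_P$, they overlap, and since the preceding lemma guarantees $P'_d\cap P''_d=\varnothing$, the disjointness of these two rectangles with overlapping frequency components forces $I_{P'}\cap I_{P''}=\varnothing$. Your case analysis re-derives material the preceding lemma already encapsulates: your strict-inclusion case essentially replays the argument used \emph{inside} the preceding lemma's proof (passing to the dyadic parent $\omega_{P'}$ and invoking \eqref{eq:disjProp}), while your equality case is the paper's down-half-disjointness argument restricted to that one sub-case. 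Both routes are valid; the paper's is simply more economical, delegating all the work to the preceding lemma once and for all rather than reproving part of it.
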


\begin{proof}
Let $P',P''$ be two such tiles. So in particular $\omega_{P_d'}\cap\omega_{P_d''}\supseteq\omega_P\neq\varnothing$. Since $P_d'\cap P_d''=\varnothing$, it must be that $I_{P'}\cap I_{P''}=\varnothing$. The fact that $I_{P'}\subseteq I_{\mathbb{T}}^c$ is immediate from \eqref{eq:disjProp}
\end{proof}

\begin{lemma}\label{lem:upTreesOnly}
Property~\eqref{eq:disjProp} implies that each $\mathbb{T}\in\mathscr{T}$ can be divided into up-trees $\mathbb{T}_j$, $j\in J(\mathbb{T})$, with pairwise disjoint supports $I_{\mathbb{T}_j}$. Hence, writing
\begin{equation*}
  \tilde{\mathscr{T}}:=\{\mathbb{T}_j:j\in J(\mathbb{T}),\mathbb{T}\in\mathscr{T}\},
\end{equation*}
inequality \eqref{eq:FtileType} for $\tilde{\mathscr{T}}$ implies \eqref{eq:FtileType} for $\mathscr{T}$.
In particular, it suffices to consider the Fourier-tile type condition for up-trees only.
\end{lemma}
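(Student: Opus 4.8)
\textbf{Proof plan for Lemma~\ref{lem:upTreesOnly}.}

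The plan is to split each tree $\mathbb{T}\in\mathscr{T}$ into up-trees by sorting its tiles according to which ``side'' of the top frequency interval they sit in, and then to check that the resulting sub-collection $\tilde{\mathscr{T}}$ still satisfies property~\eqref{eq:disjProp}, so that \eqref{eq:FtileType} applied to $\tilde{\mathscr{T}}$ degenerates back to \eqref{eq:FtileType} for $\mathscr{T}$.

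First I would recall the standard time-frequency decomposition of a tree into up- and down-trees. Fix a top tile $T$ of $\mathbb{T}$ with $I_T=I_{\mathbb{T}}$. For each $P\in\mathbb{T}$ we have $\omega_P\supseteq\omega_T$, and since $\omega_P$ and $\omega_T$ are dyadic, either $\omega_P\supseteq\omega_{T,u}$ or $\omega_P\supseteq\omega_{T,d}$ (or $\omega_P=\omega_T$, which can be put in either class; say the former). Call $\mathbb{T}^+$ the tiles with $\omega_P\supseteq\omega_{T,u}$: this is an up-tree, with the same top, because $P\leq T$ and $\omega_P\supseteq\omega_{T,u}$ force $P_u\leq T_u$. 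The complement $\mathbb{T}^-$ has $\omega_P\supseteq\omega_{T,d}$; it is a down-tree. Now here property~\eqref{eq:disjProp} enters in the form of the preceding Lemma~\ref{lem:disjProp} (applied within the single tree $\mathbb{T}$, or rather its consequence for $\mathbb{P}$): for any fixed half-tile-sized frequency interval, the tiles whose \emph{down}-halves contain it have pairwise disjoint time intervals. Applying this to $P,P'\in\mathbb{T}^-$ whose down-halves all contain $\omega_{T,d}$ (a single fixed interval of size $\tfrac12\abs{\omega_T}$), one gets that the time intervals of the tiles of $\mathbb{T}^-$ are pairwise disjoint. Thus $\mathbb{T}^-$ is a down-tree in which no two tiles are comparable in the time variable, which is exactly a disjoint union of singleton up-trees. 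Grouping: the up-trees of $\mathbb{T}$ are $\mathbb{T}^+$ together with the singletons $\{P\}$, $P\in\mathbb{T}^-$; their top time intervals are $I_{\mathbb{T}}$ and the $I_P$, $P\in\mathbb{T}^-$, and these are pairwise disjoint (the $I_P\subseteq I_{\mathbb{T}}$ but in fact, being incomparable with the down-half of the top, must be strictly smaller and disjoint from each other — one should double-check whether $I_P$ can meet $I_{\mathbb{T}}$; since the top is only a \emph{minimal top time interval} this needs a short argument, e.g. replace the top by a genuine tile of $\mathbb{T}$ and note the $I_P$ for $P\in\mathbb{T}^-$ are among the tiles counted in Lemma~\ref{lem:disjProp}, hence mutually disjoint, and their union is contained in $I_{\mathbb{T}}$; if exact disjointness from $I_{\mathbb{T}}$ fails one simply keeps $\mathbb{T}^-$'s singletons indexed by $I_P$ and notes $\sum\abs{I_P}\le\abs{I_{\mathbb{T}}}$, which is all that is needed below).

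Next I would verify that $\tilde{\mathscr{T}}$ inherits \eqref{eq:disjProp}. This is essentially automatic: every $\mathbb{T}_j$ is a subcollection of some $\mathbb{T}\in\mathscr{T}$ with $I_{\mathbb{T}_j}\subseteq I_{\mathbb{T}}$, and every tile of $\bigcup_j\mathbb{T}_j$ is a tile of $\bigcup\mathscr{T}$. So if $P\in\mathbb{T}_i$, $P'\in\mathbb{T}_j$ with $\omega_P\subseteq\omega_{P_d'}$, then $P\in\mathbb{T}$, $P'\in\mathbb{T}'$ for the parent trees, the hypothesis of \eqref{eq:disjProp} for $\mathscr{T}$ applies and gives $I_{P'}\cap I_{\mathbb{T}}=\varnothing$, hence \emph{a fortiori} $I_{P'}\cap I_{\mathbb{T}_i}=\varnothing$. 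Finally I would read off the implication for \eqref{eq:FtileType}: the left-hand sum over $\tilde{\mathscr{T}}$ contains, for each original $\mathbb{T}$, the term $\bNorm{\sum_{P\in\mathbb{T}^+}\pair{f}{\phi_P}\phi_P}{L^q}^q$ plus the terms $\bnorm{\pair{f}{\phi_P}\phi_P}{L^q}^q$ for $P\in\mathbb{T}^-$, which by the pairwise disjointness of the down-halves of $\mathbb{T}=\mathbb{T}^+\cup\mathbb{T}^-$ (orthogonality of the $\phi_P$, or just the kernel bounds of Proposition~\ref{prop:czo}(1) together with their disjoint frequency supports) dominate $\bNorm{\sum_{P\in\mathbb{T}}\pair{f}{\phi_P}\phi_P}{L^q}^q$ up to a constant; and on the right-hand side $\sum_j\abs{I_{\mathbb{T}_j}}=\abs{I_{\mathbb{T}}}+\sum_{P\in\mathbb{T}^-}\abs{I_P}\le 2\abs{I_{\mathbb{T}}}$, so the geometric quantity is comparable. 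Hence \eqref{eq:FtileType} for $\tilde{\mathscr{T}}$ yields \eqref{eq:FtileType} for $\mathscr{T}$ with a worse constant.

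The main obstacle is the bookkeeping in the last paragraph: one must make sure that passing from $\mathbb{T}$ to its up-tree pieces does not lose control of $A_{\mathbb{T}}f=\sum_{P\in\mathbb{T}}\pair{f}{\phi_P}\phi_P$, i.e. that $\bNorm{A_{\mathbb{T}^+}f}{L^q}^q+\sum_{P\in\mathbb{T}^-}\bnorm{\pair{f}{\phi_P}\phi_P}{L^q}^q\gtrsim\bNorm{A_{\mathbb{T}}f}{L^q}^q$. The cleanest way is to note $A_{\mathbb{T}}=A_{\mathbb{T}^+}+A_{\mathbb{T}^-}$, bound $\bNorm{A_{\mathbb{T}}f}{L^q}\le\bNorm{A_{\mathbb{T}^+}f}{L^q}+\bNorm{A_{\mathbb{T}^-}f}{L^q}$, and then control $\bNorm{A_{\mathbb{T}^-}f}{L^q}$ by the $\ell^q$ (indeed $\ell^2$, or $\ell^q$ via a vector-valued square-function / disjoint-support argument — here one uses that $X$ is UMD, or more elementarily that the $\phi_P$, $P\in\mathbb{T}^-$, have pairwise disjoint frequency supports and pairwise disjoint time localizations) sum $\bigl(\sum_{P\in\mathbb{T}^-}\bnorm{\pair{f}{\phi_P}\phi_P}{L^q}^q\bigr)^{1/q}$. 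Assembling these and summing the $q$-th powers over $\mathbb{T}\in\mathscr{T}$ finishes the reduction.
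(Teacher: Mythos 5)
Your decomposition is genuinely different from the paper's, and it runs into difficulties that the paper's choice avoids. You split $\mathbb{T}$ relative to the abstract top $T$ (a tile with $I_T=I_{\mathbb{T}}$, which \emph{need not belong to $\mathbb{T}$}), into $\mathbb{T}^+$ and $\mathbb{T}^-$, then try to show $\mathbb{T}^-$ is a union of singleton up-trees. The paper instead takes the maximal tiles $T_j\in\mathbb{T}$ themselves and sets $\mathbb{T}_j:=\{P\in\mathbb{T}:P\le T_j\}$. Because each $T_j$ is a genuine element of $\mathbb{T}$, one can plug $T_j$ directly into \eqref{eq:disjProp}: if some $P\in\mathbb{T}_j$ had $\omega_{T_j}\subseteq\omega_{P_d}$, then $I_P\cap I_{\mathbb{T}}=\varnothing$, contradicting $I_P\subseteq I_{\mathbb{T}}$; so $\mathbb{T}_j$ is an up-tree. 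Disjointness of the $I_{T_j}$ then follows from the $T_j$ being pairwise incomparable maximal tiles with overlapping $\omega$'s. No auxiliary patching is needed, and the lemma's disjointness assertion is obtained literally.

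Your route has two concrete problems. First, the disjointness of the $I_{\mathbb{T}_j}$ genuinely fails in your decomposition: $I_{\mathbb{T}^+}$ and the singleton supports $I_P$, $P\in\mathbb{T}^-$, can overlap, and your patch (a) --- ``replace the top by a genuine tile and note the $I_P$ are among those counted in Lemma~\ref{lem:disjProp}'' --- is self-contradictory, because Lemma~\ref{lem:disjProp} would put those $I_P$ inside $I_{\mathbb{T}}^c$, while $P\in\mathbb{T}$ forces $I_P\subseteq I_{\mathbb{T}}$. (What this contradiction actually tells you is that when a genuine tile with time length $|I_{\mathbb{T}}|$ exists, $\mathbb{T}^-$ is empty --- a fact you do not exploit.) Patch (b) only gives $\sum_j|I_{\mathbb{T}_j}|\lesssim|I_{\mathbb{T}}|$, which is weaker than the claimed disjointness and leaves the ``Hence'' step in a different state than the paper claims. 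Second, in the final assembly you propose to control $\|A_{\mathbb{T}^-}f\|_{L^q}$ via ``pairwise disjoint frequency supports'' of the $\phi_P$, $P\in\mathbb{T}^-$. This is false: every $\widehat{\phi_P}$ is supported in $\omega_{P_d}$, and for $P\in\mathbb{T}^-$ all the intervals $\omega_{P_d}$ contain $\omega_T$, so they are nested rather than disjoint. (What is disjoint is the collection of down half-\emph{tiles} $P_d$, i.e.\ the time intervals $I_P$.) Adopting the paper's maximal-tile decomposition removes both obstacles at once and makes the reduction to up-trees immediate.
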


\begin{proof}
Let $T_j$, $j\in J(\mathbb{T})$ be the maximal tiles in $\mathbb{T}$. Then $\mathbb{T}$ splits into the trees $\mathbb{T}_j:=\{P\in\mathbb{T}:P\leq T_j\}$. From \eqref{eq:disjProp} with $P'=T_j$ it follows that each $\mathbb{T}_j$ is an up-tree. Also, since $\omega_{\mathbb{T}_j}\cap\omega_{\mathbb{T}_{j'}}\supseteq\omega_{\mathbb{T}}\neq\varnothing$, it follows that $I_{T_j}\cap I_{T_{j'}}=\varnothing$ since $T_j\cap T_{j'}=\varnothing$ by maximality.
\end{proof}

\section{The Fourier tile-type of a Hilbert space}

The starting point of the deeper aspect of Fourier tile-type is the validity of the defining estimate in a Hilbert space. We start with a preliminary formulation of this result.

\begin{proposition}\label{prop:HilbertBasicEst}
For a Hilbert space $H$, the following estimate holds:
\begin{equation*}
\begin{split}
  \Big(\sum_{P\in\mathbb{P}} &\abs{\pair{f}{\phi_P}}^2\Big)^{1/2} \\
  &\leq C\Norm{f}{L^2(\R;X)}+C\Big(\sup_{P\in\mathbb{P}}\frac{\abs{\pair{f}{\phi_P}}}{\abs{I_P}^{1/2}}
    \Big[\sum_{\mathbb{T}\in\mathscr{T}}\abs{I_{\mathbb{T}}}\Big]^{1/2}\Big)^{1/3}\Norm{f}{L^2(\R;H)}^{2/3}
\end{split}
\end{equation*}
whenever $f\in L^2(\R;H)$, and $\mathbb{P}=\bigcup_{\mathbb{T}\in\mathscr{T}}\mathbb{T}$, where $\mathscr{T}$ is as in \eqref{eq:disjProp}.
\end{proposition}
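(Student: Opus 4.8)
The plan is a $TT^*$-type argument. I would put $c_P:=\pair{f}{\phi_P}\in H$ for $P\in\mathbb{P}$ and $g:=\sum_{P\in\mathbb{P}}c_P\phi_P$; since $\mathscr{T}$ is a finite family of finite trees this is a finite sum, and, writing $S:=\sum_{P\in\mathbb{P}}\abs{c_P}^2$, one has $S=\pair{f}{g}_{L^2(\R;H)}\le\Norm{f}{L^2(\R;H)}\Norm{g}{L^2(\R;H)}$. Expanding the square and using $\Norm{\phi_P}{L^2(\R)}=\Norm{\varphi}{L^2(\R)}$ for every $P$ gives $\Norm{g}{L^2(\R;H)}^2=\Norm{\varphi}{L^2(\R)}^2\,S+E$ with
\begin{equation*}
  E:=\sum_{\substack{P,P'\in\mathbb{P}\\ P\neq P'}}\pair{c_P}{c_{P'}}_H\,\pair{\phi_P}{\phi_{P'}}_{L^2(\R)}.
\end{equation*}
Hence the proposition reduces to the \emph{off-diagonal bound} $\abs{E}\lesssim M\,L^{1/2}\,S^{1/2}$, where $M:=\sup_{P\in\mathbb{P}}\abs{c_P}\abs{I_P}^{-1/2}$ and $L:=\sum_{\mathbb{T}\in\mathscr{T}}\abs{I_{\mathbb{T}}}$: granting it, $S\le\Norm{f}{L^2(\R;H)}(\Norm{\varphi}{L^2(\R)}^2 S+\abs{E})^{1/2}\lesssim\Norm{f}{L^2(\R;H)}S^{1/2}+\Norm{f}{L^2(\R;H)}M^{1/2}L^{1/4}S^{1/4}$, and solving this inequality for $S$ (distinguishing which term on the right dominates) yields $S^{1/2}\lesssim\Norm{f}{L^2(\R;H)}+(ML^{1/2})^{1/3}\Norm{f}{L^2(\R;H)}^{2/3}$, exactly the claim.

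For the off-diagonal bound I would first note that $\pair{\phi_P}{\phi_{P'}}=0$ unless the Fourier supports meet, i.e.\ $\omega_{P_d}\cap\omega_{P'_d}\neq\varnothing$, and that $\pair{\phi_P}{\phi_{P'}}=0$ also when $\omega_{P_d}=\omega_{P'_d}$ by the standing hypothesis on $\mathbb{P}$; since the $\omega_{P_d}$ are dyadic, only pairs with (say) $\omega_{P_d}\subsetneq\omega_{P'_d}$ contribute, and after symmetrising it is enough to bound $2\sum_{P\in\mathbb{P}}\abs{c_P}\sum_{P'}\abs{c_{P'}}\abs{\pair{\phi_P}{\phi_{P'}}}$, where for fixed $P$ the inner sum runs over $P'\in\mathbb{P}$ with $\omega_{P_d}\subsetneq\omega_{P'_d}$ — equivalently $\abs{I_{P'}}<\abs{I_P}$ and $\omega_P\subseteq\omega_{P'_d}$. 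Fixing such a $P$ and a tree $\mathbb{T}=\mathbb{T}(P)\in\mathscr{T}$ with $P\in\mathbb{T}$, property~\eqref{eq:disjProp} gives $I_{P'}\cap I_{\mathbb{T}}=\varnothing$ for every relevant $P'$, Lemma~\ref{lem:disjProp} makes these $I_{P'}$ pairwise disjoint and contained in $I_{\mathbb{T}}^c$, and since $I_P\subseteq I_{\mathbb{T}}$ also $I_P\cap I_{P'}=\varnothing$. Using the standard wave-packet decay
\begin{equation*}
  \abs{\pair{\phi_P}{\phi_{P'}}}\lesssim_N\Big(\frac{\abs{I_{P'}}}{\abs{I_P}}\Big)^{1/2}\Big(1+\frac{\dist(I_P,I_{P'})}{\abs{I_P}}\Big)^{-N},\qquad \abs{I_{P'}}\le\abs{I_P},
\end{equation*}
which is immediate from the Schwartz decay of $\varphi$ because $I_P\cap I_{P'}=\varnothing$, together with $\abs{c_{P'}}\le M\abs{I_{P'}}^{1/2}$, splitting $I_{\mathbb{T}}^c$ into its two half-lines, and summing the disjoint $I_{P'}$ over dyadic distance-shells away from $I_P$, I get
\begin{equation*}
  \sum_{P'}\abs{c_{P'}}\abs{\pair{\phi_P}{\phi_{P'}}}\lesssim_N M\,\abs{I_P}^{1/2}\,w_P,\qquad w_P:=\Big(1+\frac{\dist\big(I_P,\R\setminus I_{\mathbb{T}(P)}\big)}{\abs{I_P}}\Big)^{-N}.
\end{equation*}

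It then follows by Cauchy--Schwarz that $\abs{E}\lesssim M\sum_{P\in\mathbb{P}}\abs{c_P}\abs{I_P}^{1/2}w_P\le M\,S^{1/2}\big(\sum_{P\in\mathbb{P}}\abs{I_P}w_P^2\big)^{1/2}$, so the last thing to check is the packing bound $\sum_{P\in\mathbb{P}}\abs{I_P}w_P^2\lesssim L$. Grouping by the chosen tree, $\sum_{P\in\mathbb{P}}\abs{I_P}w_P^2\le\sum_{\mathbb{T}\in\mathscr{T}}\sum_{P\in\mathbb{T}}\abs{I_P}w_P^2$; for a fixed $\mathbb{T}$, every $I_P$ with $P\in\mathbb{T}$ is a dyadic subinterval of $I_{\mathbb{T}}$, with no repetition at a given size (in a tree $\omega_P$ is determined by $\abs{I_P}$), and among the $2^k$ dyadic subintervals of $I_{\mathbb{T}}$ of length $2^{-k}\abs{I_{\mathbb{T}}}$ the ratio $\dist(I_P,\R\setminus I_{\mathbb{T}})/\abs{I_P}$ runs through $0,1,2,\dots$ from either end, so that $\sum_{P\in\mathbb{T},\ \abs{I_P}=2^{-k}\abs{I_{\mathbb{T}}}}\abs{I_P}w_P^2\lesssim 2^{-k}\abs{I_{\mathbb{T}}}$; summing over $k\ge0$ gives $\sum_{P\in\mathbb{T}}\abs{I_P}w_P^2\lesssim\abs{I_{\mathbb{T}}}$, and summing over $\mathbb{T}$ gives $\sum_{P\in\mathbb{P}}\abs{I_P}w_P^2\lesssim L$. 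This closes the argument.

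I expect the packing bound of the last step to be the main obstacle, and it is the one place where hypothesis~\eqref{eq:disjProp} genuinely enters: the point is to see that the tiles contributing non-negligibly to $E$ are exactly those whose time interval sits near the boundary of its tree top, and that the decay factor $w_P$ coming from \eqref{eq:disjProp} is precisely strong enough to collapse $\sum_{P\in\mathbb{T}}\abs{I_P}w_P^2$ from the typically much larger $\sum_{P\in\mathbb{T}}\abs{I_P}$ down to $\abs{I_{\mathbb{T}}}$. The only other slightly delicate step is the concluding algebra, which converts the self-improving inequality for $S$ into the somewhat unusual exponents $1/3$ and $2/3$ appearing in the statement.
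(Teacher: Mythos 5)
Your proof is correct and takes essentially the same $TT^*$ route as the paper: bound $\sum_P|\pair{f}{\phi_P}|^2$ by $\Norm{f}{2}\,\bNorm{\sum_P\pair{f}{\phi_P}\phi_P}{2}$, expand the square, control the off-diagonal part using Lemma~\ref{lem:disjProp}, the wave-packet decay estimate, and the packing bound $\sum_P|I_P|w_P^2\lesssim\sum_{\mathbb T}|I_{\mathbb T}|$, then solve the resulting self-improving inequality to obtain the $1/3$--$2/3$ exponents. The only minor divergence is that you invoke the standing orthogonality hypothesis to annihilate the off-diagonal terms with $\omega_P=\omega_{P'}$ outright, whereas the paper estimates that contribution (its $S_1$) non-trivially via \eqref{eq:ipEst}, so its argument does not rely on that hypothesis; either treatment is fine.
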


\begin{remark}
Proposition~\ref{prop:HilbertBasicEst} is essentially contained in the proof of Proposition~3.2 in \cite{LT:MRL}, but not explicitly formulated as here, so we reproduce the proof for completeness.
\end{remark}

\begin{proof}[Proof of Proposition~\ref{prop:HilbertBasicEst}]
We denote the left side by $S$, and estimate
\begin{equation*}
\begin{split}
  S^2:=\sum_{P\in\mathbb{P}}\abs{\pair{f}{\phi_P}}^2
  &=\Bpair{\sum_{P\in\mathbb{P}}\pair{f}{\phi_P}\phi_P}{f} \\
  &\leq\BNorm{\sum_{P\in\mathbb{P}}\pair{f}{\phi_P}\phi_P}{L^2(\R;H)}\Norm{f}{L^2(\R;H)}.
\end{split}
\end{equation*}
Here
\begin{equation*}
\begin{split}
  \BNorm{\sum_{P\in\mathbb{P}}\pair{f}{\phi_P}\phi_P}{L^2(\R;H)}^2
  &=\sum_{P,P'\in\mathbb{P}}\pair{f}{\phi_P}\pair{\phi_P}{\phi_{P'}}\pair{\phi_{P'}}{f} \\
  &=\Big(\sum_{\substack{P,P'\in\mathbb{P}\\ \omega_P=\omega_{P'} }}
    +2\sum_{\substack{P,P'\in\mathbb{P}\\ \omega_P\subseteq\omega_{P'_d} }}\Big)\pair{f}{\phi_P}\pair{\phi_P}{\phi_{P'}}\pair{\phi_{P'}}{f} \\
   &=:S_1+2S_2,
\end{split}
\end{equation*}
where the middle line follows from the fact that $\supp\hat\phi_P\subseteq\omega_{P_d}$, so that $\pair{\phi_P}{\phi_{P'}}\neq 0$ only if $\omega_{P_d}\cap\omega_{P_d}\neq\varnothing$, which means that these intervals either coincide, or one is strictly contained in the other.

To proceed further, we need the elementary estimate
\begin{equation}\label{eq:ipEst}
  \abs{\pair{\phi_P}{\phi_{P'}}}\lesssim\Big(\frac{\abs{I_P}}{\abs{I_{P'}}}\Big)^{1/2}\Norm{v_{I_P}1_{I_{P'}}}{1},
  \qquad \qquad\abs{I_{P'}}\leq\abs{I_P}.
\end{equation}
where $\displaystyle v_I(x):=\frac{1}{\abs{I}}\Big(1+\frac{\abs{x-c(I)}}{\abs{I}}\Big)^{-10}$.

In $S_1$, we can use \eqref{eq:ipEst} with either order of $I_P$ and $I_{P'}$, to get
\begin{equation*}
\begin{split}
  S_1 &\leq \sum_{\substack{P,P'\in\mathbb{P}\\ \omega_P=\omega_{P'} }}\frac{1}{2}(\abs{\pair{f}{\phi_P}}^2+\abs{\pair{f}{\phi_{P'}}}^2)
         \min\{\Norm{v_{I_P}1_{I_{P'}}}{1},\Norm{v_{I_{P'}}1_{I_{P}}}{1}\} \\
       &\leq\sum_{P\in\mathbb{P}}\abs{\pair{f}{\phi_P}}^2\sum_{\substack{P'=I'\times\omega'\in\mathbb{P}\\ \omega'=\omega_{P} }}\Norm{v_{I_P}1_{I'}}{1} 
         \leq\sum_{P\in\mathbb{P}}\abs{\pair{f}{\phi_P}}^2\sum_{\substack{I'\in\mathscr{D}\\ \abs{I'}=\abs{I_P}}}\Norm{v_{I_P}1_{I'}}{1} \\
        &=\sum_{P\in\mathbb{P}}\abs{\pair{f}{\phi_P}}^2\Norm{v_{I_P}}{1}\lesssim\sum_{P\in\mathbb{P}}\abs{\pair{f}{\phi_P}}^2 = S^2.
\end{split}
\end{equation*}
We used the fact that the intervals $I'\in\mathscr{D}$ with $\abs{I'}=\abs{I_P}$ form a partition of~$\R$.

We turn our attention to $S_2$:
\begin{equation*}
\begin{split}
  S_2 &\lesssim\sum_{P\in\mathbb{P}}\abs{\pair{f}{\phi_P}}\sum_{\substack{P'\in\mathbb{P}\\ \omega_{P_d'}\supset\omega_P}}
    \Big(\frac{\abs{I_P}}{\abs{I_{P'}}}\Big)^{1/2}\Norm{v_{I_P}1_{I_{P'}}}{1}\abs{\pair{\phi_{P'}}{f}} \\
    &\leq\Big(\sup_{P'\in\mathbb{P}}\frac{\abs{\pair{\phi_{P'}}{f}}}{\abs{I_{P'}}^{1/2}}\Big)
      \sum_{P\in\mathbb{P}}\abs{\pair{f}{\phi_P}}\abs{I_P}^{1/2} \sum_{\substack{P'\in\mathbb{P}\\ \omega_{P_d'}\supset\omega_P}}\Norm{v_{I_P}1_{I_{P'}}}{1} \\
    &\leq\Big(\sup_{P'\in\mathbb{P}}\frac{\abs{\pair{\phi_{P'}}{f}}}{\abs{I_{P'}}^{1/2}}\Big)
      \sum_{P\in\mathbb{P}}\abs{\pair{f}{\phi_P}}\abs{I_P}^{1/2}\Norm{v_{I_P}1_{I_{\mathbb{T}(P)}^c}}{1}.
\end{split}
\end{equation*}
In the last line, we denoted by $\mathbb{T}(P)$ the the unique tree with $P\in\mathbb{T}(P)\in\mathscr{T}$, and used Lemma~\ref{lem:disjProp} which guarantees that the intervals $I_{P'}$ appearing in the inner sum on the penultimate line are pairwise disjoint and contained in $I_{\mathbb{T}(P)}^c$.

We use Cauchy--Schwarz to get
\begin{equation*}
\begin{split}
   \sum_{P\in\mathbb{P}} &\abs{\pair{f}{\phi_P}}\abs{I_P}^{1/2}\Norm{v_{I_P}1_{I_{\mathbb{T}(P)}^c}}{1} \\
   &\leq\Big(\sum_{P\in\mathbb{P}}\abs{\pair{f}{\phi_P}}^2\Big)^{1/2}
      \Big(\sum_{P\in\mathbb{P}}\abs{I_P}\Norm{v_{I_P}1_{I_{\mathbb{T}(P)}^c}}{1}^2\Big)^{1/2} \\
   &\lesssim S\Big(\sum_{P\in\mathbb{P}}\abs{I_P}\Norm{v_{I_P}1_{I_{\mathbb{T}(P)}^c}}{1}\Big)^{1/2},
\end{split}
\end{equation*}
where we estimated $\Norm{v_{I_P}1_{I_{\mathbb{T}(P)}^c}}{1}\leq\Norm{v_{I_P}}{1}\lesssim 1$.

Finally, we write
\begin{equation*}
\begin{split}
   \sum_{P\in\mathbb{P}}\abs{I_P}\Norm{v_{I_P}1_{I_{\mathbb{T}(P)}^c}}{1}
   &=\sum_{\mathbb{T}\in\mathscr{T}}\sum_{P\in\mathbb{T}}\abs{I_P}\Norm{v_{I_P}1_{I_{\mathbb{T}}^c}}{1} \\
   &\leq\sum_{\mathbb{T}\in\mathscr{T}}\sum_{P=I\times\omega\leq I_{\mathbb{T}}\times\omega_{\mathbb{T}}}
        \abs{I}\Norm{v_I 1_{I_{\mathbb{T}}^c}}{1} 
   \leq\sum_{\mathbb{T}\in\mathscr{T}}\sum_{\substack{I\in\mathscr{D}\\ I\subseteq I_{\mathbb{T}}}}  \abs{I}\Norm{v_I 1_{I_{\mathbb{T}}^c}}{1},
\end{split}
\end{equation*}
where the last step follows from the fact that the summand depends only on the time interval $I$, and that the frequency interval $\omega$ of $P=I\times\omega$ is uniquely determined by $I$, since $\abs{\omega}=1/\abs{I}$ and $\omega\supseteq\omega_{\mathbb{T}}$. Using the elementary bound
\begin{equation*}
     \sum_{\substack{I\in\mathscr{D}\\ I\subseteq I_{\mathbb{T}}}}   \abs{I}\Norm{v_I 1_{I_{\mathbb{T}}^c}}{1}\lesssim\abs{I_{\mathbb{T}}},
\end{equation*}
and combining all the estimates, we have shown that
\begin{equation*}
  S^2\leq\sqrt{S_1+2S_2}\Norm{f}{2}
  \lesssim\sqrt{S^2+AS}\Norm{f}{2},
\end{equation*}
where
\begin{equation*}  
   A:=\Big(\sup_{P'\in\mathbb{P}}\frac{\abs{\pair{\phi_{P'}}{f}}}{\abs{I_{P'}}^{1/2}}\Big)
    \Big(\sum_{\mathbb{T}\in\mathscr{T}}\abs{I_{\mathbb{T}}}\Big)^{1/2}\Norm{f}{2}.
\end{equation*}
If $S^2\geq AS$, we get $S^2\lesssim S\Norm{f}{2}$, and hence $S\lesssim\Norm{f}{2}$. If $S^2<AS$, then $S^2\lesssim A^{1/2}S^{1/2}\Norm{f}{2}$, thus $S^{3/2}\lesssim A^{1/2}\Norm{f}{2}$, and hence $S\lesssim A^{1/3}\Norm{f}{2}^{2/3}$. So in any case we deduce that
\begin{equation*}
  S\lesssim \Norm{f}{2}+A^{1/3}\Norm{f}{2}^{2/3},
\end{equation*}
which is the asserted bound.
\end{proof}

\begin{corollary}\label{cor:HilbertWeakType}
For a Hilbert space $H$, the following estimate holds:
\begin{equation*}
  \sum_{\substack{P\in\mathbb{P}: \\ \frac{\abs{\pair{f}{\phi_P}}}{\sqrt{\abs{I_P}}}>\lambda}} \abs{I_P}
  \leq \frac{C}{\lambda^2}\Norm{f}{L^2(\R;H)}^2.
\end{equation*}
whenever $f\in L^2(\R;H)$, and $\mathbb{P}=\bigcup_{\mathbb{T}\in\mathscr{T}}\mathbb{T}$, where $\mathscr{T}$ is as in \eqref{eq:disjProp}.
\end{corollary}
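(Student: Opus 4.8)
The plan is to deduce the estimate from Proposition~\ref{prop:HilbertBasicEst}, applied not to all of $\mathbb{P}$ but to the sub-collection of tiles whose normalized coefficient $\abs{\pair{f}{\phi_P}}/\abs{I_P}^{1/2}$ lies in a fixed dyadic band, and then to sum over bands. Concretely, I would write
\[
  \Bigl\{P\in\mathbb{P}:\frac{\abs{\pair{f}{\phi_P}}}{\abs{I_P}^{1/2}}>\lambda\Bigr\}=\bigcup_{k\geq 0}\mathbb{P}_k,\qquad
  \mathbb{P}_k:=\Bigl\{P\in\mathbb{P}:2^k\lambda<\frac{\abs{\pair{f}{\phi_P}}}{\abs{I_P}^{1/2}}\leq 2^{k+1}\lambda\Bigr\},
\]
a disjoint union with only finitely many nonempty pieces since $\mathbb{P}$ is finite. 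It then suffices to prove $\sum_{P\in\mathbb{P}_k}\abs{I_P}\lesssim 2^{-2k}\lambda^{-2}\Norm{f}{L^2(\R;H)}^2$ for each $k$ and to sum the resulting geometric series.

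Fix $k$ and put $\mathscr{T}_k:=\{\mathbb{T}\cap\mathbb{P}_k:\mathbb{T}\in\mathscr{T}\}$, discarding empty sets. Each $\mathbb{T}\cap\mathbb{P}_k$ is a subcollection of the tree $\mathbb{T}$, hence a tree, and since any top of $\mathbb{T}$ is a top of $\mathbb{T}\cap\mathbb{P}_k$, its minimal top time interval satisfies $I_{\mathbb{T}\cap\mathbb{P}_k}\subseteq I_{\mathbb{T}}$. Therefore $\mathscr{T}_k$ still satisfies \eqref{eq:disjProp}, and $\mathbb{P}_k=\bigcup_{\mathbb{S}\in\mathscr{T}_k}\mathbb{S}$ is an admissible configuration for all the lemmas of the preceding sections; moreover $\mathbb{P}_k\subseteq\mathbb{P}$ still obeys the standing separation hypothesis on tiles, so the proof of Proposition~\ref{prop:HilbertBasicEst} applies verbatim to the pair $(\mathbb{P}_k,\mathscr{T}_k)$.

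Next I would rerun that proof with $(\mathbb{P}_k,\mathscr{T}_k)$ in place of $(\mathbb{P},\mathscr{T})$, changing only the final estimate: in bounding $S_2$ I would use the disjointness of the intervals $I_{P'}$ from Lemma~\ref{lem:disjProp} only through $\sum_{P':\,\omega_{P'_d}\supseteq\omega_P}\Norm{v_{I_P}1_{I_{P'}}}{1}\leq\Norm{v_{I_P}}{1}\lesssim 1$, so that the quantity the original proof controls by $\sum_{\mathbb{T}}\abs{I_{\mathbb{T}}}$ is instead controlled by $\sum_{P\in\mathbb{P}_k}\abs{I_P}$. Writing $S:=(\sum_{P\in\mathbb{P}_k}\abs{\pair{f}{\phi_P}}^2)^{1/2}$, $\Sigma:=\sum_{P\in\mathbb{P}_k}\abs{I_P}$ and $\mu:=\sup_{P\in\mathbb{P}_k}\abs{\pair{f}{\phi_P}}/\abs{I_P}^{1/2}$, the same chain of inequalities (Cauchy--Schwarz followed by the two-case analysis of the proposition) then yields
\[
  S\lesssim\Norm{f}{L^2(\R;H)}+\mu^{1/3}\Sigma^{1/6}\Norm{f}{L^2(\R;H)}^{2/3}.
\]

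Finally I would feed in the two bounds that hold by the definition of the band $\mathbb{P}_k$, namely $\mu\leq 2^{k+1}\lambda$ and $S^2>(2^k\lambda)^2\Sigma$, the latter giving $\Sigma^{1/6}<S^{1/3}(2^k\lambda)^{-1/3}$. Substituting, the powers of $2^k\lambda$ cancel exactly and one is left with $S\lesssim\Norm{f}{L^2(\R;H)}+S^{1/3}\Norm{f}{L^2(\R;H)}^{2/3}$; since $S<\infty$ ($\mathbb{P}_k$ is a finite set of tiles), Young's inequality absorbs the last term and gives $S\lesssim\Norm{f}{L^2(\R;H)}$, whence $\sum_{P\in\mathbb{P}_k}\abs{I_P}=\Sigma<(2^k\lambda)^{-2}S^2\lesssim 2^{-2k}\lambda^{-2}\Norm{f}{L^2(\R;H)}^2$. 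Summing over $k\geq 0$ gives the corollary. The one genuinely new ingredient beyond Proposition~\ref{prop:HilbertBasicEst} — and the step I expect to require the most care — is exactly this dyadic splitting in the size of the normalized coefficient: applied directly to the full super-level set, the proposition leaves the uncontrolled factor $\sup_P\abs{\pair{f}{\phi_P}}/\abs{I_P}^{1/2}$, and it is only by confining the tiles to a single band, where that supremum is comparable to the stopping level $2^k\lambda$, that the bootstrap closes.
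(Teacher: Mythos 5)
Your proof is correct, and it rests on the same central idea as the paper's: a dyadic band decomposition of the super-level set in the quantity $\abs{\pair{f}{\phi_P}}/\abs{I_P}^{1/2}$, so that the previously uncontrolled $\sup$ is pinned to the stopping level and the bootstrap closes. The one place you diverge is in how you invoke Proposition~\ref{prop:HilbertBasicEst} on a single band $\mathbb{P}_k$. You keep the tree structure $\mathscr{T}_k=\{\mathbb{T}\cap\mathbb{P}_k\}$ inherited from $\mathscr{T}$ and therefore must re-open the proof of the proposition, weakening the $S_2$ estimate (dropping the restriction $1_{I_{\mathbb{T}(P)}^c}$ so as to obtain $\sum_{P\in\mathbb{P}_k}\abs{I_P}$ rather than $\sum_{\mathbb{T}}\abs{I_\mathbb{T}}$ on the right). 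The paper instead applies Proposition~\ref{prop:HilbertBasicEst} as a black box: it regards $\mathbb{P}_\lambda$ as a union of \emph{single-tile trees} $\mathscr{T}_\lambda=\{\{P\}:P\in\mathbb{P}_\lambda\}$, which trivially inherits property~\eqref{eq:disjProp} and for which $\sum_{\mathbb{T}\in\mathscr{T}_\lambda}\abs{I_{\mathbb{T}}}=\sum_{P\in\mathbb{P}_\lambda}\abs{I_P}$ holds by definition, so the proposition's conclusion is exactly the bound you re-derive. Your modified argument is sound (the bounded-overlap disjointness from Lemma~\ref{lem:disjProp} is all you use, and every subsequent step including the Cauchy--Schwarz and the two-case analysis goes through), but the paper's choice of trees makes the re-derivation unnecessary; noticing this is the only shortcut you missed.
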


\begin{remark}
This result is from \cite{BL:03}. It says that the mapping
\begin{equation*}
  f\mapsto\Big\{\frac{\abs{\pair{f}{\phi_P}}}{\sqrt{\abs{I_P}}}1_{I_P}:P\in\mathbb{P}\Big\}
\end{equation*}
takes $L^2(\R;H)$ boundedly into $L^{2,\infty}(\R\times\mathbb{P};H)$, where $\R\times\mathbb{P}$ is equipped with the product of the Lebesgue and the counting measures. We prove it as a corollary to Proposition~\ref{prop:HilbertBasicEst} for completeness.
\end{remark}

\begin{proof}
Let $S_\lambda$ stand for the left side as written, and $S_\lambda'$ for a modification with the summation condition $\lambda<\frac{\abs{\pair{f}{\phi_P}}}{\sqrt{\abs{I_P}}}\leq 2\lambda$ instead. Thus $S_\lambda=\sum_{k=0}^\infty S_{2^k\lambda}'$. If we can prove the assertion for $S_\lambda'$ in place of $S_\lambda$, then
\begin{equation*}
  S_\lambda\leq \sum_{k=0}^\infty C(2^k\lambda)^{-2}\Norm{f}{L^2(\R:H)}^2 =\frac{4}{3}\frac{C}{\lambda^2}\Norm{f}{L^2(\R^2;H)}^2,
\end{equation*}
so we obtain the claim as stated.

To prove the bound for $S_\lambda'$, denote
\begin{equation*}
  \mathbb{P}_\lambda:=\{P\in\mathbb{P}: \lambda<\frac{\abs{\pair{f}{\phi_P}}}{\sqrt{\abs{I_P}}}\leq 2\lambda\},
\end{equation*}
and write this as a union of single-tile trees
\begin{equation*}
  \mathbb{P}_\lambda=\bigcup_{\mathbb{T}\in\mathscr{T}_\lambda}\mathbb{T},\qquad \mathscr{T}_\lambda:=\{\{P\}:P\in\mathbb{P}_\lambda\}.
\end{equation*}
Since $\mathbb{P}_\lambda\subseteq\mathbb{P}$, it is immediate that $\mathscr{T}_\lambda$ also satisfies \eqref{eq:disjProp}. Hence, from Proposition~\ref{prop:HilbertBasicEst}, we deduce that
\begin{equation*}
\begin{split}
  \lambda^2\sum_{P\in\mathbb{P}_\lambda}\abs{I_P}
  &\leq \sum_{P\in\mathbb{P}_\lambda}\abs{\pair{f}{\phi_P}}^2 \\
  &\leq C\Norm{f}{L^2(\R;H)}^2+C\Big(\sup_{P\in\mathbb{P}_\lambda}\frac{\abs{\pair{f}{\phi_P}}}{\abs{I_P}^{1/2}}
    \Big[\sum_{P\in\mathbb{P}_\lambda}\abs{I_P}\Big]^{1/2}\Big)^{2/3}\Norm{f}{L^2(\R;H)}^{4/3} \\
  &\leq C\Norm{f}{L^2(\R;H)}^2+C\Big(2\lambda
    \Big[\sum_{P\in\mathbb{P}_\lambda}\abs{I_P}\Big]^{1/2}\Big)^{2/3}\Norm{f}{L^2(\R;H)}^{4/3}.
\end{split}
\end{equation*}
Thus
\begin{equation*}
  \lambda^2 S_\lambda'=\lambda^2\sum_{P\in\mathbb{P}_\lambda}\abs{I_P}\leq C\Norm{f}{L^2(\R:H)}^2
    +C\big(\lambda^2 S_\lambda'\big)^{1/3}\Norm{f}{L^2(\R;H)}^{4/3},
\end{equation*}
from which the claim that $\lambda^2 S_\lambda'\leq C\Norm{f}{L^2(\R;H)}^2$ immediately follows.
\end{proof}

A combination of the previous results leads to the final form of the tile-type inequality for Hilbert spaces:

\begin{proposition}\label{prop:HilbertTileType}
Every Hilbert space has Fourier tile-type $2$. In fact, the following more precise estimate is valid:
\begin{equation*}
\begin{split}
  \Big(\sum_{\mathbb{T}\in\mathscr{T}} &\BNorm{\sum_{P\in\mathbb{T}}\pair{f}{\phi_P}\phi_P}{L^2(\R;H)}^2\Big)^{1/2}
   \leq C\Big(\sum_{P\in\mathbb{P}} \abs{\pair{f}{\phi_P}}^2\Big)^{1/2} \\
  &\leq C\Norm{f}{L^2(\R;H)}\Big\{1+\log_+\Big(\frac{\Norm{f}{L^\infty(\R;H)}}{\Norm{f}{L^2(\R;H)}}\Big[\sum_{\mathbb{T}\in\mathscr{T}}\abs{I_{\mathbb{T}}}\Big]^{1/2}\Big)\Big\}^{1/2} \\
  &\leq C\Norm{f}{L^2(\R;H)}+\frac{C}{\sqrt{1-\alpha}}
     \Big(\Norm{f}{L^\infty(\R;H)}\Big[\sum_{\mathbb{T}\in\mathscr{T}}\abs{I_{\mathbb{T}}}\Big]^{1/2}\Big)^{1-\alpha}\Norm{f}{L^2(\R;H)}^{\alpha}.
\end{split}
\end{equation*}
whenever $f\in L^2(\R;H)\cap L^\infty(\R;H)$, and $\mathbb{P}=\bigcup_{\mathbb{T}\in\mathscr{T}}\mathbb{T}$, where $\mathscr{T}$ is as in \eqref{eq:disjProp}. Here $C$ is independent of $\alpha\in(0,1)$.
\end{proposition}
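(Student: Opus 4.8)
The first inequality is essentially free: by Proposition~\ref{prop:czo}(1) and Proposition~\ref{prop:HilbertBasicEst} we already know that for each single tree $\mathbb{T}$,
\[
  \BNorm{\sum_{P\in\mathbb{T}}\pair{f}{\phi_P}\phi_P}{L^2(\R;H)}^2
  =\sum_{P,P'\in\mathbb{T}}\pair{f}{\phi_P}\pair{\phi_P}{\phi_{P'}}\pair{\phi_{P'}}{f},
\]
and by Lemma~\ref{lem:upTreesOnly} we may assume each $\mathbb{T}$ is an up-tree, in which case the wave packets $\{\phi_P:P\in\mathbb{T}\}$ are pairwise orthogonal, so the right side collapses to $\sum_{P\in\mathbb{T}}\abs{\pair{f}{\phi_P}}^2$. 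Summing over $\mathbb{T}\in\mathscr{T}$ and using that the down-halves $P_d$ are pairwise disjoint across all of $\mathbb{P}$ (so in particular no tile lies in two trees) gives $\sum_{\mathbb{T}}\sum_{P\in\mathbb{T}}\abs{\pair{f}{\phi_P}}^2\le\sum_{P\in\mathbb{P}}\abs{\pair{f}{\phi_P}}^2$. Thus the whole statement reduces to bounding $S:=\big(\sum_{P\in\mathbb{P}}\abs{\pair{f}{\phi_P}}^2\big)^{1/2}$ by the displayed right-hand sides.

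\textbf{The logarithmic bound.} This is where Corollary~\ref{cor:HilbertWeakType} enters. Write $a_P:=\abs{\pair{f}{\phi_P}}/\sqrt{\abs{I_P}}$, so that $S^2=\sum_{P\in\mathbb{P}}a_P^2\abs{I_P}$, and distribute the sum over the dyadic levels $2^k\lambda_0<a_P\le 2^{k+1}\lambda_0$. On each level, $\sum_{a_P\sim 2^k\lambda_0}a_P^2\abs{I_P}\lesssim (2^k\lambda_0)^2\cdot\sum_{a_P>2^k\lambda_0}\abs{I_P}\lesssim\Norm{f}{L^2(\R;H)}^2$ by the weak-type estimate --- but this is only useful for finitely many $k$, so I need a cut-off from both ends. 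For the top end, note the crude pointwise bound $a_P=\abs{\pair{f}{\phi_P}}/\sqrt{\abs{I_P}}\lesssim\Norm{f}{L^\infty(\R;H)}$ (since $\phi_P$ is, up to modulation and translation, $\abs{I_P}^{-1/2}$ times an $L^1$-normalized bump), which caps the number of relevant levels from above. For the bottom end, the tiles with $a_P$ very small contribute $\sum a_P^2\abs{I_P}\le\lambda_0^2\sum_{P\in\mathbb{P}}\abs{I_P}$; choosing $\lambda_0^2:=\Norm{f}{L^2(\R;H)}^2/\sum_{\mathbb{T}}\abs{I_{\mathbb{T}}}$ makes this $\lesssim\Norm{f}{L^2}^2$ after observing $\sum_{P\in\mathbb{P}}\abs{I_P}\lesssim\sum_{\mathbb{T}\in\mathscr{T}}\abs{I_{\mathbb{T}}}$ (each tree's tiles, being nested in its top, have total time-measure $\lesssim\abs{I_{\mathbb{T}}}$, using that distinct tiles with equal $\omega$ are far apart and that the down-halves are disjoint). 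With these two cut-offs, the number of dyadic levels genuinely in play is $\lesssim 1+\log_+\big(\Norm{f}{L^\infty}\big/\lambda_0\big)=1+\log_+\big(\Norm{f}{L^\infty}\big[\sum_{\mathbb{T}}\abs{I_{\mathbb{T}}}\big]^{1/2}\big/\Norm{f}{L^2}\big)$, and each contributes $\lesssim\Norm{f}{L^2}^2$, giving $S^2\lesssim\Norm{f}{L^2}^2\{1+\log_+(\cdots)\}$.

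\textbf{From log to $\alpha$-bound.} This is the elementary last step: set $x:=\Norm{f}{L^\infty}\big[\sum_{\mathbb{T}}\abs{I_{\mathbb{T}}}\big]^{1/2}\big/\Norm{f}{L^2}$, so $S\lesssim\Norm{f}{L^2}(1+\log_+x)^{1/2}$ and I want $S\lesssim\Norm{f}{L^2}+(1-\alpha)^{-1/2}\Norm{f}{L^2}x^{1-\alpha}$. If $x\le 1$ then $\log_+x=0$ and the first term alone works. If $x>1$, use the pointwise inequality $(1+\log x)^{1/2}\lesssim (1-\alpha)^{-1/2}x^{1-\alpha}$, valid for all $x\ge 1$ and $\alpha\in(0,1)$ with a constant independent of $\alpha$: indeed $\log x=\tfrac{1}{1-\alpha}\log(x^{1-\alpha})\le\tfrac{1}{1-\alpha}(x^{1-\alpha}-1)\le\tfrac{1}{1-\alpha}x^{1-\alpha}$ since $\log t\le t-1$, so $1+\log x\le 1+\tfrac{1}{1-\alpha}x^{1-\alpha}\le\tfrac{2}{1-\alpha}x^{1-\alpha}$ as $(1-\alpha)^{-1}x^{1-\alpha}\ge x^{1-\alpha}\ge 1$; taking square roots finishes it.

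\textbf{Main obstacle.} The only genuinely delicate point is the two-sided level cut-off in the logarithmic estimate: making sure the "too large" tiles are excluded by the cheap $L^\infty$ bound on $a_P$ and that the "too small" ones are absorbed using $\sum_{P\in\mathbb{P}}\abs{I_P}\lesssim\sum_{\mathbb{T}\in\mathscr{T}}\abs{I_{\mathbb{T}}}$, with the threshold $\lambda_0$ chosen to balance these against the weak-type bound. Everything else --- the reduction to $S$ via orthogonality on up-trees, and the final convexity estimate --- is routine.
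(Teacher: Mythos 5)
Your reduction to bounding $S=(\sum_{P\in\mathbb{P}}\abs{\pair{f}{\phi_P}}^2)^{1/2}$ and your top cut-off and your final log-to-$\alpha$ conversion are all fine and match the paper's strategy. The gap is the bottom cut-off. You claim $\sum_{P\in\mathbb{P}}\abs{I_P}\lesssim\sum_{\mathbb{T}\in\mathscr{T}}\abs{I_{\mathbb{T}}}$, and justify it by saying each tree's tiles are ``nested in its top'' and ``distinct tiles with equal $\omega$ are far apart''. This is false: a single up-tree $\mathbb{T}$ can contain tiles at every scale $2^{-j}\abs{I_{\mathbb{T}}}$, $j=0,1,\ldots,J$, and at each scale the $20$-separation condition still permits on the order of $2^j/20$ tiles whose time intervals together cover a fixed positive fraction of $I_{\mathbb{T}}$. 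Thus $\sum_{P\in\mathbb{T}}\abs{I_P}$ grows like $J\cdot\abs{I_{\mathbb{T}}}$, i.e.\ like $\abs{I_{\mathbb{T}}}$ times the number of scales present, which is unbounded for finite trees. So the crude bound $\sum_{a_P\le\lambda_0}a_P^2\abs{I_P}\le\lambda_0^2\sum_{P}\abs{I_P}$ does not close with your choice of $\lambda_0$, no matter the constant.

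The paper avoids this by not using the crude bound at all on the small-coefficient tiles. Instead, after isolating $\mathbb{P}_{-1}=\{P\in\mathbb{P}:a_P\le\lambda\}$ with $\lambda^2=\Norm{f}{2}^2/\sum_{\mathbb{T}}\abs{I_{\mathbb{T}}}$, it applies Proposition~\ref{prop:HilbertBasicEst} directly to $\mathbb{P}_{-1}$ (which still satisfies~\eqref{eq:disjProp}). That estimate has $\sup_{\mathbb{P}_{-1}}a_P\cdot[\sum_{\mathbb{T}}\abs{I_{\mathbb{T}}}]^{1/2}\le\lambda[\sum_{\mathbb{T}}\abs{I_{\mathbb{T}}}]^{1/2}=\Norm{f}{2}$ on the right-hand side, and the quantity $\sum_{\mathbb{T}}\abs{I_{\mathbb{T}}}$ --- not $\sum_P\abs{I_P}$ --- is what appears, so one gets $\sum_{P\in\mathbb{P}_{-1}}\abs{\pair{f}{\phi_P}}^2\lesssim\Norm{f}{2}^2$ cleanly. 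You should replace your bottom cut-off step with a direct appeal to Proposition~\ref{prop:HilbertBasicEst}; the rest of your proposal then goes through.
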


\begin{proof}
By Lemma~\ref{lem:upTreesOnly}, we may assume that all $\mathbb{T}\in\mathscr{T}$ are up-trees. Then, for each fixed $\mathbb{T}$, the functions $\phi_P$, $P\in\mathbb{T}$, split into 20 pairwise orthonormal subcollections. Thus
\begin{equation*}
   \BNorm{\sum_{P\in\mathbb{T}}\pair{f}{\phi_P}\phi_P}{L^2(\R;H)}^2
  \lesssim\sum_{P\in\mathbb{T}}\abs{\pair{f}{\phi_P}}^2,
\end{equation*}
which gives the first estimate. The third estimate is elementary.

The rest of the proof is concerned with the second estimate. We first observe the upper bound
\begin{equation*}
  \frac{\abs{\pair{f}{\phi_P}}}{\sqrt{\abs{I_P}}}\leq \frac{\Norm{f}{\infty}\Norm{\phi_P}{1}}{\sqrt{\abs{I_P}}}\leq C\Norm{f}{\infty}.
\end{equation*}
Accordingly, we can split
\begin{equation*}
  \mathbb{P}:=\bigcup_{k=-1}^K\mathbb{P}_k,
\end{equation*}
where
\begin{equation*}
\begin{split}
  \mathbb{P}_{-1} &:=\Big\{P\in\mathbb{P}:\frac{\abs{\pair{f}{\phi_P}}}{\sqrt{\abs{I_P}}}\leq\lambda
     :=\frac{\Norm{f}{2}}{\sqrt{\sum_{\mathbb{T}\in\mathscr{T}}\abs{I_{\mathbb{T}}}}}\Big\},\\
  \mathbb{P}_k &:=\Big\{P\in\mathbb{P}:2^k\lambda<\frac{\abs{\pair{f}{\phi_P}}}{\sqrt{\abs{I_P}}}\leq 2^{k+1}\lambda\Big\},\qquad
      k=0,1,\ldots, K,
\end{split}
\end{equation*}
and
\begin{equation*}
  K:=\log_2^+\Big(C\frac{\Norm{f}{\infty}}{\Norm{f}{2}}\sqrt{\sum_{\mathbb{T}\in\mathscr{T}}\abs{I_{\mathbb{T}}}}\Big).
\end{equation*}
Each $\mathbb{P}_k$ can be written as
\begin{equation*}
  \mathbb{P}_k=\bigcup_{\mathbb{T}\in\mathscr{T}}\mathbb{T}\cap\mathbb{P}_k,
\end{equation*}
where $\mathbb{T}\cap\mathbb{P}_k$ is a tree with the same top as $\mathbb{T}$, and it is immediate that each $\mathscr{T}_k:=\{\mathbb{T}\cap\mathbb{P}_k:\mathbb{T}\in\mathscr{T}\}$ inherits property \eqref{eq:disjProp} form $\mathscr{T}$.

Hence, from Proposition~\ref{prop:HilbertBasicEst} we have that
\begin{equation*}
\begin{split}
  \sum_{P\in\mathbb{P}_{-1}}\abs{\pair{f}{\phi_P}}^2
  &\lesssim\Norm{f}{2}^2+\Big(\sup_{P\in\mathbb{P}_{-1}}\frac{\abs{\pair{f}{\phi_P}}}{\abs{I_P}^{1/2}}\sqrt{\sum_{\mathbb{T}\in\mathscr{T}}\abs{I_{\mathbb{T}}}}\Big)^{2/3}\Norm{f}{2}^{4/3}\\
  &\leq\Norm{f}{2}^2+\Big(\lambda\sqrt{\sum_{\mathbb{T}\in\mathscr{T}}\abs{I_{\mathbb{T}}}}\Big)^{2/3}\Norm{f}{2}^{4/3}
    =\Norm{f}{2}^2+\Norm{f}{2}^{2/3}\Norm{f}{2}^{4/3},
\end{split}
\end{equation*}
and from Corollary~\ref{cor:HilbertWeakType} that
\begin{equation*}
\begin{split}
  \sum_{P\in\mathbb{P}_k}\abs{\pair{f}{\phi_P}}^2
  \lesssim (2^k\lambda)^2\sum_{P\in\mathbb{P}_k}\abs{I_P}
  \lesssim (2^k\lambda)^2\frac{1}{(2^k\lambda)^2}\Norm{f}{2}^2=\Norm{f}{2}^2.
\end{split}
\end{equation*}
Altogether, we obtain
\begin{equation*}
\begin{split}
  \sum_{P\in\mathbb{P}}\abs{\pair{f}{\phi_P}}^2
  &=\sum_{k=-1}^K\sum_{P\in\mathbb{P}_k}\abs{\pair{f}{\phi_P}}^2
  \lesssim(1+K)\Norm{f}{2}^2 \\
  &\lesssim\Norm{f}{2}^2\Big[1+\log_+\Big(\frac{\Norm{f}{\infty}}{\Norm{f}{2}}\sqrt{\sum_{\mathbb{T}\in\mathscr{T}}\abs{I_{\mathbb{T}}}}\Big)\Big].
\end{split}
\end{equation*}
The assertion follows by taking the square root of both sides.
\end{proof}

\section{The Fourier tile-type of interpolation spaces}

In order to extend the class of spaces with Fourier tile-type beyond the Hilbertian realm, we employ interpolation techniques. We start with a technical statement concerning the intersection spaces that appear in the definition of Fourier tile-type.

\begin{lemma}\label{lem:interpolation}
Let $\{X_0,X_1\}$ be an interpolation couple of Banach spaces, and denote $X_\theta:=[X_0,X_1]_\theta$ and $q:=2/\theta$. Let $L^2(\R;X_1)\cap L^\infty(\R;X_1)$ be equipped with the norm
\begin{equation*}
  \Norm{f}{L^2(\R;X_1)\cap L^\infty(\R;X_1)}:=A\Norm{f}{L^\infty(\R;X_1)}+B\Norm{f}{L^2(\R;X_1)},
\end{equation*}
and $L^q(\R;X_\theta)\cap L^\infty(\R;X_\theta)$ with the norm
\begin{equation*}
  \Norm{f}{L^q(\R;X_\theta)\cap L^\infty(\R;X_\theta)}:=A^\theta\Norm{f}{L^\infty(\R;X_\theta)}+B^\theta\Norm{f}{L^q(\R;X_\theta)}.
\end{equation*}
Then
\begin{equation*}
   L^q(\R;X_\theta)\cap L^\infty(\R;X_\theta)
   = [L^\infty(X_0),L^2(\R;X_1)\cap L^\infty(\R;X_1)]_\theta,
\end{equation*}
where the norms are uniformly equivalent, independently of the positive numbers $A$ and $B$.
\end{lemma}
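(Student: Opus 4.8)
The statement is an identification of a complex interpolation space, so the natural strategy is to invoke the standard interpolation theorems for vector-valued $L^p$ spaces and then deal with the intersection structure. First I would recall the two building blocks: (i) for a single exponent, $[L^\infty(\R;X_0),L^2(\R;X_1)]_\theta = L^q(\R;X_\theta)$ with $1/q = (1-\theta)/\infty + \theta/2 = \theta/2$ (the Calder\'on--Lions / Stein--Weiss formula for complex interpolation of Bochner spaces, valid since $X_\theta = [X_0,X_1]_\theta$); and (ii) complex interpolation commutes with intersections in the sense that $[A_0, A_1 \cap B_1]_\theta$ can be computed provided one has compatible ambient structure. The key observation is that $L^\infty(\R;X_0)$ and $L^2(\R;X_1)\cap L^\infty(\R;X_1)$ both embed into the large space $L^\infty(\R;X_0+X_1) + L^2(\R;X_0+X_1)$, and that one can apply interpolation "coordinatewise" to the couple built out of the diagonal embedding.

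**Main step: reduce to a retract / diagonal argument.** The cleanest route is to realize $L^2\cap L^\infty$ as a retract of the direct sum $L^2 \oplus L^\infty$ via the maps $f \mapsto (f,f)$ and $(g,h)\mapsto$ (something like $g$ where they agree). More precisely, set $Z_0 := L^\infty(\R;X_0)$ embedded diagonally into $L^\infty(\R;X_0)\oplus L^\infty(\R;X_0)$, and $Z_1 := L^2(\R;X_1)\oplus L^\infty(\R;X_1)$ with the weighted norm $B\|\cdot\|_{L^2} + A\|\cdot\|_{L^\infty}$ on the two factors; the diagonal $f\mapsto(f,f)$ is then an isometric embedding of $L^2(\R;X_1)\cap L^\infty(\R;X_1)$ (with the stated weighted norm) into $Z_1$, with a norm-one linear retraction (project to either factor — they agree on the diagonal). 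Since complex interpolation respects retracts, $[Z_0 \text{ diag}, Z_1]_\theta$ on the diagonal equals $[L^\infty(\R;X_0),\,L^2(\R;X_1)\cap L^\infty(\R;X_1)]_\theta$. Now $[L^\infty(\R;X_0)\oplus L^\infty(\R;X_0),\, L^2(\R;X_1)\oplus L^\infty(\R;X_1)]_\theta$ splits as a direct sum of two interpolation problems: $[L^\infty(\R;X_0),L^2(\R;X_1)]_\theta = L^q(\R;X_\theta)$ with weight $B^\theta$, and $[L^\infty(\R;X_0),L^\infty(\R;X_1)]_\theta = L^\infty(\R;X_\theta)$ with weight $A^\theta$; restricting back to the diagonal recovers exactly $A^\theta\|f\|_{L^\infty(\R;X_\theta)} + B^\theta\|f\|_{L^q(\R;X_\theta)}$, which is the asserted norm on $L^q(\R;X_\theta)\cap L^\infty(\R;X_\theta)$. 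The uniformity in $A,B$ is automatic because the weights enter the interpolation functor homogeneously: rescaling a factor of an interpolation couple by a constant $c$ rescales the $\theta$-interpolation norm by $c^{1-\theta}$ on one end and ... — one checks the bookkeeping gives precisely $A^\theta, B^\theta$ with no hidden constants.

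**Where the subtlety lies.** The genuinely delicate point is the vector-valued complex interpolation formula $[L^\infty(\R;X_0),L^2(\R;X_1)]_\theta = L^q(\R;X_\theta)$: complex interpolation of Bochner spaces $[L^{p_0}(\mu;X_0),L^{p_1}(\mu;X_1)]_\theta = L^{p}(\mu;[X_0,X_1]_\theta)$ is standard and true for all $1\le p_0,p_1\le\infty$ on a $\sigma$-finite measure space (Bergh--L\"ofstr\"om, Thm. 5.1.2, in the form extended to the vector-valued setting by Calder\'on), but when $p_0=\infty$ one must be mildly careful about which $L^\infty$ is meant and about density; since the paper only ever applies the lemma with $f$ in the intersection $L^q\cap L^\infty$, where density issues are benign, this is not a real problem, but it is the step I would write out most carefully. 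The remaining ingredients — that complex interpolation commutes with finite direct sums, and is invariant under retractions — are completely standard (Bergh--L\"ofstr\"om §1.2, §6.4) and I would simply cite them. So the plan is: cite the direct-sum and retract stability of $[\cdot,\cdot]_\theta$, cite the vector-valued $L^p$ interpolation theorem for the two ends, assemble the diagonal/retract picture as above, and track the weights $A,B$ through the homogeneity of the functor to get the uniform equivalence.
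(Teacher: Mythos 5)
The retract argument at the heart of your proposal does not work, and the gap is exactly at the step you gloss over. You claim that the diagonal embedding $f \mapsto (f,f)$ of $L^2(\R;X_1)\cap L^\infty(\R;X_1)$ into $Z_1 = L^2(\R;X_1)\oplus L^\infty(\R;X_1)$ admits a ``norm-one linear retraction (project to either factor --- they agree on the diagonal).'' But a retraction must be defined on all of $Z_1$, not just on the diagonal, and it must take values in the diagonal. Projecting $(g,h)\in L^2\oplus L^\infty$ to its first coordinate gives $g\in L^2$, which in general is not in $L^\infty$ and hence not in the diagonal; projecting to the second coordinate has the symmetric defect. Averaging $(g,h)\mapsto\tfrac12(g+h)$ fails for the same reason. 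In fact the diagonal $\{(f,f):f\in A_0\cap A_1\}$ is typically not a complemented subspace of $A_0\oplus A_1$, and for the interpolation-of-retracts machinery you would need even more: a \emph{single} linear retraction that is simultaneously bounded from $Z_0\to Z_0^{\operatorname{diag}}$ and $Z_1\to Z_1^{\operatorname{diag}}$. No such map is in sight. This is precisely why the lemma is not ``completely standard''; the authors' own Remark warns that the putative identity $[E,F\cap G]_\theta = [E,F]_\theta\cap[E,G]_\theta$ is in general non-trivial.

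By contrast, you correctly identify the inclusion $[L^\infty(X_0),L^2(X_1)\cap L^\infty(X_1)]_\theta \subseteq L^q(X_\theta)\cap L^\infty(X_\theta)$ as the easy direction (it follows from $[E,F\cap G]_\theta\subseteq[E,F]_\theta\cap[E,G]_\theta$ together with the two single-exponent identifications $[L^\infty(X_0),L^2(X_1)]_\theta=L^q(X_\theta)$ and $[L^\infty(X_0),L^\infty(X_1)]_\theta=L^\infty(X_\theta)$); this is also how the paper handles it, and your bookkeeping of the weights $A^\theta,B^\theta$ via homogeneity of the functor is fine. The hard direction, however, must be proved by hand. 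The paper does so by an explicit Calder\'on-functor construction: given a countably-valued $f=\sum_k x_k 1_{E_k}$, one chooses analytic selections $\phi_k\in\mathscr{F}(X_0,X_1)$ with $\phi_k(\theta)=x_k/\|x_k\|_{X_\theta}$ and forms
\begin{equation*}
  F(z):=\sum_{k}\phi_k(z)\,\|x_k\|_{X_\theta}^{z/\theta}1_{E_k}.
\end{equation*}
The crucial point, which has no abstract substitute, is that this \emph{one} analytic function simultaneously satisfies the $L^\infty(X_0)$ bound on the line $\Re z=0$ and both the $L^2(X_1)$ and $L^\infty(X_1)$ bounds on the line $\Re z=1$. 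The flexibility comes from the $L^\infty(X_0)$ endpoint: it absorbs the disjoint supports $1_{E_k}$ with no penalty, which is what lets the two constraints at $\Re z=1$ be met at once. Your proposal is missing this construction, and without it there is no proof.
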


\begin{remark}
It is well known that
\begin{equation*}
\begin{split}
  L^q(\R;X_\theta) &= [L^\infty(\R;X_0),L^2(\R;X_1)]_\theta,\\
  L^\infty(\R;X_\theta) &= [L^\infty(\R;X_0),L^\infty(\R;X_1)]_\theta.
\end{split}
\end{equation*}
However, in general the identity $[E,F\cap G]_\theta=[E,F]_\theta\cap[E,G]_\theta$ seems to be non-trivial. See \cite{Haase:06}, Section 3.3, for comments on the case of real interpolation.
%
\end{remark}

\begin{proof}
Let
\begin{equation*}
  f=\sum_{k=1}^\infty x_k 1_{E_k}
  =\sum_{k=1}^\infty x_k^0\Norm{x_k}{X_\theta} 1_{E_k}\in L^q(\R;X_\theta)\cap L^\infty(\R;X_\theta)
\end{equation*}
be a countably-valued function, where the measurable sets $E_k$ are pairwise disjoint. Such functions are dense in the space under consideration. Suppose further that the norm of $f$ in this space is at most one. Since $x_k^0\in X_\theta\in[X_0,X_1]_\theta$, we can find a function
\begin{equation*}
\begin{split}
  \phi_k\in\mathscr{F}(X_0,X_1):=\Big\{\phi: & [0,1]+i\R\to X_0+X_1;  \\
   &  \phi\textup{ holomorphic on }(0,1)+i\R,\\ 
   & \phi \textup{ continuous and bounded on }[0,1]+i\R,\\
   &\Norm{\phi}{\mathscr{F}(X_0,X_1)}:=\sup_{u\in\{0,1\}}\sup_{t\in\R}\Norm{\phi(u+it)}{X_u}<\infty\Big\}
\end{split}
\end{equation*}
with $\phi_k(\theta)=x_k^0$ and $\Norm{\phi_k}{\mathscr{F}(X_0,X_1)}\lesssim\Norm{x_k^0}{X_\theta}=1$. We define
\begin{equation*}
  F(z):=\sum_{k=1}^\infty \phi_k(z)\Norm{x_k}{X_\theta}^{z/\theta}1_{E_k}.
\end{equation*}
It follows that $  \Norm{F(it)}{L^\infty(\R;X_0)}\lesssim 1$, and
\begin{equation*}
  \Norm{F(1+it)}{L^\infty(\R;X_1)}\lesssim \Norm{f}{L^\infty(\R;X_\theta)}^{1/\theta},\qquad
  \Norm{F(1+it)}{L^2(\R;X_1)}\lesssim \Norm{f}{L^q(\R;X_\theta)}^{1/\theta},
\end{equation*}
so that
\begin{equation*}
\begin{split}
   \Norm{F(1+it)}{L^2(\R;X_1)\cap L^\infty(\R;X_1)}
   &\lesssim A\Norm{f}{L^\infty(\R;X_\theta)}^{1/\theta}+B\Norm{f}{L^q(\R;X_\theta)}^{1/\theta} \\
   &\leq \big(A^\theta\Norm{f}{L^\infty(\R;X_\theta)}+B^\theta\Norm{f}{L^q(\R;X_\theta)}\big)^{1/\theta} \\
   &=\Norm{f}{L^q(\R;X_\theta)\cap L^\infty(\R;X_\theta)}^{1/\theta}\leq 1.
\end{split}
\end{equation*}
Hence
\begin{equation*}
  F\in \mathscr{F}(L^\infty(\R;X_0),L^2(\R;X_1)\cap L^\infty(\R;X_1))
\end{equation*}
with norm $\lesssim 1$, and therefore
\begin{equation*}
  f=F(\theta)\in[L^\infty(\R;X_0),L^2(\R;X_1)\cap L^\infty(\R;X_1))]_\theta
\end{equation*}
with norm $\lesssim 1$. This proves the bounded embedding
\begin{equation*}
  L^q(\R;X_\theta)\cap L^\infty(\R;X_\theta)
  \subseteq [L^\infty(X_0),L^2(\R;X_1)\cap L^\infty(\R;X_1)]_\theta.
\end{equation*}

For the converse direction, it is immediate from standard results that
\begin{equation*}
  [L^\infty(X_0),L^2(\R;X_1)\cap L^\infty(\R;X_1)]_\theta
  \subseteq\begin{cases} [L^\infty(\R;X_0),L^2(\R;X_1)]_\theta= L^q(\R;X_\theta),  \\
      [L^\infty(\R;X_0),L^\infty(\R;X_1)]_\theta= L^\infty(\R;X_\theta), \end{cases}
\end{equation*}
and therefore also that
\begin{equation*}
  [L^2(\R;X_1)\cap L^\infty(\R;X_1),L^\infty(X_0)]_\theta
  \subseteq L^q(\R;X_\theta)\cap L^\infty(\R;X_\theta).
\end{equation*}
For the correct norm estimate, one should note that if $Y_1$ is equipped with $\lambda$ times its usual norm, then $[Y_0,Y_1]_\theta$ will be equipped with $\lambda^\theta$ times its usual norm. This fact is easy to check and completes the proof.
\end{proof}

The main result of this section is the following, which provides a good supply of spaces with non-trivial Fourier tile-type. It is through this result that the class of intermediate UMD spaces enters into Theorem~\ref{thm:main}.

\begin{proposition}
If $X=X_\theta=[X_0,X_1]_\theta$ is a complex interpolation space between a UMD space $X_0$ and a Hilbert space $X_1$, then $X$ has Fourier tile-type $q=2/\theta$.
\end{proposition}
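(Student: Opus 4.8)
The plan is to deduce the Fourier tile-type of $X_\theta$ by complex interpolation between the two end-point estimates already at our disposal: the UMD bound for up-tree operators (Proposition~\ref{prop:czo}(2)), which holds in $X_0$, and the sharp tile-type estimate in a Hilbert space (Proposition~\ref{prop:HilbertTileType}), which holds in $X_1$. By Lemma~\ref{lem:upTreesOnly} we may assume that $\mathscr{T}$ consists of up-trees with pairwise disjoint top intervals, so the left-hand side of \eqref{eq:FtileType} is the norm of the sequence $(A_{\mathbb{T}}f)_{\mathbb{T}\in\mathscr{T}}$ in the vector-valued space $\ell^q(\mathscr{T};L^q(\R;X))$. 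The key object is the linear operator
\begin{equation*}
  U:f\mapsto (A_{\mathbb{T}}f)_{\mathbb{T}\in\mathscr{T}},
\end{equation*}
and we wish to prove that it maps $L^q(\R;X_\theta)\cap L^\infty(\R;X_\theta)$, equipped with a suitably weighted intersection norm, boundedly into $\ell^q(\mathscr{T};L^q(\R;X_\theta))$, with operator norm controlled by the right-hand side of \eqref{eq:FtileType}.

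The first step is to record the two end-point mapping properties of $U$. At the $X_1=H$ end, Proposition~\ref{prop:HilbertTileType} gives, for up-trees,
\begin{equation*}
  \Norm{Uf}{\ell^2(\mathscr{T};L^2(\R;H))}
  \lesssim \Norm{f}{L^2(\R;H)}+\Big(\Norm{f}{L^\infty(\R;H)}\Big[\sum_{\mathbb{T}\in\mathscr{T}}\abs{I_{\mathbb{T}}}\Big]^{1/2}\Big),
\end{equation*}
i.e. $U$ maps $L^2(\R;H)\cap L^\infty(\R;H)$, with the norm $A\Norm{\cdot}{L^\infty(\R;H)}+B\Norm{\cdot}{L^2(\R;H)}$ for the choice $A=(\sum_{\mathbb{T}}\abs{I_{\mathbb{T}}})^{1/2}$, $B=1$, boundedly into $\ell^2(\mathscr{T};L^2(\R;H))$ with constant $\lesssim 1$. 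At the $X_0$ end, for each up-tree $\mathbb{T}$ Proposition~\ref{prop:czo}(2) gives $\Norm{A_{\mathbb{T}}f}{L^\infty(\R;X_0)}\lesssim\Norm{f}{L^\infty(\R;X_0)}$ (the UMD constant of a CZ operator conjugated by a modulation is scale-invariant, hence bounds $L^\infty$ as well via the $L^p$ bounds and a limiting argument, or one argues directly). Since the top intervals $I_{\mathbb{T}_j}$ are pairwise disjoint, the functions $A_{\mathbb{T}}f$ have essentially disjoint supports up to tails, so $\sup_{\mathbb{T}}\Norm{A_{\mathbb{T}}f}{L^\infty}$ controls the $\ell^\infty$-sum; hence $U:L^\infty(\R;X_0)\to \ell^\infty(\mathscr{T};L^\infty(\R;X_0))$ boundedly.

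The second step is the interpolation itself. By the standard identifications $[\ell^\infty(\mathscr{T};L^\infty(\R;X_0)),\ell^2(\mathscr{T};L^2(\R;X_1))]_\theta=\ell^q(\mathscr{T};L^q(\R;X_\theta))$ (Stein interpolation for the $\ell$-exponents, together with $[L^\infty(X_0),L^2(X_1)]_\theta=L^q(X_\theta)$) and, crucially, Lemma~\ref{lem:interpolation} to identify the intersection space on the domain side, complex interpolation gives that $U$ maps $L^q(\R;X_\theta)\cap L^\infty(\R;X_\theta)$, equipped with $A^\theta\Norm{\cdot}{L^\infty(\R;X_\theta)}+B^\theta\Norm{\cdot}{L^q(\R;X_\theta)}$, into $\ell^q(\mathscr{T};L^q(\R;X_\theta))$ with constant $\lesssim 1$. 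Substituting $A=(\sum_{\mathbb{T}}\abs{I_{\mathbb{T}}})^{1/2}$, $B=1$ and writing $\theta=2/q$ yields
\begin{equation*}
  \Norm{Uf}{\ell^q(\mathscr{T};L^q(\R;X_\theta))}
  \lesssim \Big(\Norm{f}{L^\infty(\R;X_\theta)}\Big[\sum_{\mathbb{T}\in\mathscr{T}}\abs{I_{\mathbb{T}}}\Big]^{1/2}\Big)^{2/q}\Norm{f}{L^q(\R;X_\theta)}^{1-2/q}.
\end{equation*}
This is not yet quite \eqref{eq:FtileType}, because there the geometric-mean term carries exponents $1-\alpha$ and $\alpha$ with $\alpha$ arbitrarily close to $1$, rather than $2/q$ and $1-2/q$. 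The fix is to run the same interpolation between $L^\infty(\R;X_0)$ and the Hilbert estimate in the sharper logarithmic form of Proposition~\ref{prop:HilbertTileType}: one interpolates the trivial bound $\Norm{Uf}{\ell^q(L^q(X_\theta))}\lesssim\Norm{f}{L^q(X_\theta)}$ (which also follows, from Proposition~\ref{prop:czo}(2) and disjointness of tops, interpolated against the $L^\infty$ bound) with the above, or equivalently feeds the logarithmic Hilbert bound through the interpolation and optimizes; either way one extracts, for every $\alpha\in(0,1)$, the stated form with a constant blowing up like $(1-\alpha)^{-1/2}$ absorbed into $C$.

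The main obstacle I anticipate is the first step's $\ell^\infty$ end-point: Proposition~\ref{prop:czo}(2) as stated gives only $L^p(\R;X_0)\to L^p(\R;X_0)$ for $p\in(1,\infty)$, not $L^\infty\to L^\infty$, so one must either supply the $L^\infty$ (or $\BMO$) bound for the conjugated Calderón--Zygmund operators $A_{\mathbb{T}}$ directly from the kernel estimates of Proposition~\ref{prop:czo}(1) (this is standard CZ theory, uniform in the tree), or reorganize the interpolation so that the $X_0$-end uses an $L^{q_0}$ estimate with $q_0$ large and absorbs the $L^\infty$ norm through Hölder on the finitely many relevant supports — the disjointness of the tops $I_{\mathbb{T}}$ making the passage from $\ell^{q_0}$ to $\ell^\infty$ harmless. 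The secondary technical point is making the use of Lemma~\ref{lem:interpolation} clean: one must keep track of the weights $A,B$ so that the interpolated domain norm comes out exactly as $A^\theta\Norm{\cdot}{L^\infty(X_\theta)}+B^\theta\Norm{\cdot}{L^q(X_\theta)}$, which is precisely what that lemma was designed to provide.
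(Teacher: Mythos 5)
Your overall strategy is the same as the paper's: restrict to up-trees via Lemma~\ref{lem:upTreesOnly}, view the left side of \eqref{eq:FtileType} as the norm of a single operator $A_{\mathscr{T}}$ landing in a sequence space, and obtain the tile-type estimate by complex interpolation of that operator between a UMD bound at the $X_0$-endpoint and the Hilbert tile-type bound at the $X_1$-endpoint, with Lemma~\ref{lem:interpolation} supplying the identification of the intersection space on the domain side. Your self-diagnosed concern about the $\ell^\infty$-endpoint is also correct and the fix you mention first is the paper's: the paper takes $A_{\mathscr{T}}\colon L^\infty(\R;X_0)\to\ell^\infty(\mathscr{T};\BMO(\R;X_0))$ (not $L^\infty\to L^\infty$, which fails for Calder\'on--Zygmund operators) and identifies the interpolation space on the range side accordingly.

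There is, however, a genuine gap in how you obtain the $\alpha$-parameter. After interpolating the two endpoints you claim the bound $\Norm{Uf}{\ell^q(L^q)}\lesssim (\sqrt{S}\Norm{f}{\infty})^{2/q}\Norm{f}{q}^{1-2/q}$, but what the interpolation with the intersection norm $A^\theta\Norm{\cdot}{\infty}+B^\theta\Norm{\cdot}{q}$ actually produces is the \emph{sum} $A^\theta\Norm{f}{\infty}+B^\theta\Norm{f}{q}$; with your choice $A=\sqrt{S}$, $B=1$ this is $\sqrt{S}^{\,\theta}\Norm{f}{\infty}+\Norm{f}{q}$, which is only the trivial $\alpha=0$ form of the Fourier tile-type estimate, not any geometric mean. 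Your two proposed repairs do not close this gap. Interpolating against the ``trivial'' bound $\Norm{Uf}{\ell^q(L^q(X_\theta))}\lesssim\Norm{f}{L^q(X_\theta)}$ fails because that bound is false in general (it would say the error term in the tile-type estimate vanishes, which is precisely the content that has to be fought for, even in the Hilbert case where Proposition~\ref{prop:HilbertTileType} carries the logarithmic loss). Your second suggestion, ``feed the logarithmic Hilbert bound through the interpolation and optimize,'' names the right idea but does not supply the mechanism. The paper's mechanism is a free parameter $\eps>0$ inserted via Young's inequality \emph{before} interpolation: one bounds the geometric-mean term in Proposition~\ref{prop:HilbertTileType} by $\eps^{\alpha}\sqrt{S}\Norm{f}{\infty}+\eps^{\alpha-1}\Norm{f}{2}$, so that the Hilbert endpoint becomes an intersection-norm bound with constants $A=\eps^\alpha\sqrt{S}$, $B=1+\eps^{\alpha-1}$ depending on $\eps$. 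Interpolating then gives $\Norm{A_{\mathscr{T}}f}{\ell^q(L^q)}\lesssim(\eps^\alpha\sqrt{S})^\theta\Norm{f}{\infty}+(1+\eps^{\alpha-1})^\theta\Norm{f}{q}$ simultaneously for all $\eps>0$, and optimizing over $\eps$ recovers the geometric-mean term with exponents $1-\alpha$ and $\alpha$. This $\eps$-trick is the piece your argument is missing; without it the interpolation cannot push $\alpha$ beyond the trivial value.
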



\begin{proof}
By definition, we need to show that $X$ has Fourier tile-type $(q,\alpha)$ for every $\alpha\in(0,1)$. For the rest of the proof, fix one such $\alpha$.

We use Lemma~\ref{lem:upTreesOnly} to restrict to collections $\mathscr{T}$ of up-trees only, and reformulate the left side of the Fourier tile-type estimate as
\begin{equation*}
  \Norm{A_{\mathscr{T}}f}{\ell^q(\mathscr{T};L^q(\R;X))},\qquad
  A_{\mathscr{T}}f:=\Big\{\operatorname{Mod}_{-c(\omega_{\mathbb{T}_u})}\sum_{P\in\mathbb{T}}\phi_P \pair{f}{\phi_P}\Big\}_{\mathbb{T}\in\mathscr{T}}.
\end{equation*}
Note that the modulation $\operatorname{Mod}_{-c(\omega_{\mathbb{T}_u})}$ does not affect the value of the $L^q(\R;X)$-norm. However,
\begin{equation*}
  f\mapsto \sum_{P\in\mathbb{T}}\operatorname{Mod}_{-c(\omega_{\mathbb{T}_u})}\phi_P \pair{f}{\operatorname{Mod}_{-c(\omega_{\mathbb{T}_u})}\phi_P}
\end{equation*}
is a Calder\'on--Zygmund operator, and therefore it maps $L^\infty(\R;X_0)$ to $\BMO(\R;X_0)$ for any UMD space $X_0$. Moreover, the Calder\'on--Zygmund norms are uniform over the choice of different up-trees, and hence
\begin{equation*}
  \Norm{A_{\mathscr{T}}f}{\ell^\infty(\mathscr{T};\BMO(\R;X_0))}
  \lesssim \Norm{\{\operatorname{Mod}_{c(\omega_{\mathbb{T}_u})}f\}_{\mathbb{T}\in\mathscr{T}}}{\ell^\infty(\mathscr{T};L^\infty(\R;X_0))}
  =\Norm{f}{L^\infty(\R;X_0)}.
\end{equation*}
So we have
\begin{equation}\label{eq:inftyEnd}
  A_{\mathscr{T}}:L^\infty(\R;X_0)\to \ell^\infty(\mathscr{T};\BMO(\R;X_0)),
\end{equation}
which provides one end-point for interpolation.

Another one is obtained from Proposition~\ref{prop:HilbertTileType}. Abbreviating
\begin{equation*}
  S:=\sum_{\mathbb{T}\in\mathscr{T}}\abs{I_{\mathbb{T}}},
\end{equation*}
it tells that
\begin{equation*}
\begin{split}
  &\Norm{A_{\mathscr{T}}f}{\ell^2(\mathscr{T};L^2(\R;X_1))} \\
  &\lesssim\Norm{f}{L^2(\R;X_1)}+(\sqrt{S}\Norm{f}{L^\infty(\R;X_1)})^{1-\alpha}\Norm{f}{L^2(\R;X_1)}^\alpha \\
  &=\Norm{f}{L^2(\R;X_1)}+(\eps^{\alpha}\sqrt{S}\Norm{f}{L^\infty(\R;X_1)})^{1-\alpha}(\eps^{-(1-\alpha)}\Norm{f}{L^2(\R;X_1)})^\alpha \\
  &\leq\Norm{f}{L^2(\R;X_1)}+\eps^{\alpha}\sqrt{S}\Norm{f}{L^\infty(\R;X_1)}+\eps^{-(1-\alpha)}\Norm{f}{L^2(\R;X_1)}
\end{split}
\end{equation*}
for any $\eps>0$. This says that we have the boundedness
\begin{equation}\label{eq:2End}
  A_{\mathscr{T}}:L^2(\R;X_1)\cap L^\infty(\R;X_1)\to \ell^2(\mathscr{T};L^2(\R;X_1)),
\end{equation}
where $L^2(\R;X_1)\cap L^\infty(\R;X_1)$ is equipped with the norm
\begin{equation*}
  \Norm{f}{L^2(\R;X_1)\cap L^\infty(\R;X_1)}:=\eps^{\alpha}\sqrt{S}\Norm{f}{L^\infty(\R:X_1)}+(1+\eps^{\alpha-1})\Norm{f}{L^2(\R:X_1)}.
\end{equation*}

Interpolating between \eqref{eq:inftyEnd} and \eqref{eq:2End} by the complex method $[\ ,\ ]_\theta$, and using Lemma~\ref{lem:interpolation} and standard results to identify the resulting interpolation spaces in the domain and the range, we deduce that
\begin{equation*}
  A_{\mathscr{T}}:L^q(\R;X_\theta)\cap L^\infty(\R;X_\theta)\to \ell^q(\mathscr{T};L^q(\R;X_\theta)),
\end{equation*}
where $L^q(\R;X_\theta)\cap L^\infty(\R;X_\theta)$ is equipped with the norm
\begin{equation*}
  \Norm{f}{L^q(\R;X_\theta)\cap L^\infty(\R;X_\theta)}:=(\eps^{\alpha}\sqrt{S})^{\theta}\Norm{f}{L^\infty(\R:X_\theta)}+(1+\eps^{\alpha-1})^\theta\Norm{f}{L^q(\R:X_\theta)}.
\end{equation*}
In other words, we have
\begin{equation*}
  \Norm{A_{\mathscr{T}}f}{\ell^q(\mathscr{T};L^q(\R;X_\theta))}
  \lesssim\Norm{f}{L^q(\R;X_\theta)}+(\eps^{\alpha}\sqrt{S})^{\theta}\Norm{f}{L^\infty(\R:X_\theta)}+\eps^{(\alpha-1)\theta}\Norm{f}{L^q(\R:X_\theta)},
\end{equation*}
and this bound holds for every $\eps>0$ with the implied constant independent of~$\eps$.

We choose $\eps$ so as to equate the last two terms, which gives
\begin{equation*}
   \eps^\theta=\frac{\Norm{f}{q}}{\sqrt{S}^\theta\Norm{f}{\infty}}.
\end{equation*}
Substituting this leads to the final bound
\begin{equation*}
  \Norm{A_{\mathscr{T}}f}{\ell^q(\mathscr{T};L^q(\R;X_\theta))}
  \lesssim \Norm{f}{L^q(\R;X_\theta)}+\sqrt{S}^{(1-\alpha)\theta}\Norm{f}{L^\infty(\R:X_\theta)}^{1-\alpha}\Norm{f}{L^q(\R:X_\theta)}^\alpha.
\end{equation*}
Recalling that
\begin{equation*}
  \sqrt{S}^\theta=\sqrt{\sum_{\mathbb{T}\in\mathscr{T}}\abs{I_{\mathbb{T}}}}^{2/q}=\Big(\sum_{\mathbb{T}\in\mathscr{T}}\abs{I_{\mathbb{T}}}\Big)^{1/q},
\end{equation*}
we recognize the last bound as precisely the Fourier tile-type $(q,\alpha)$ estimate for $X_\theta$.
\end{proof}


\section{Carleson's theorem for large $p$}

The proof of Carleson's theorem \`a la Lacey--Thiele \cite{LT:MRL} is based on controlling two key quantities associated to any collection of tiles $\mathbb{P}$: density and energy. (The terminology is slightly variable over different papers on the subject.) More precisely, it is shown that a special Carleson operator, where the sum is over a tree of tiles, is controlled by a product of the density and the energy of the tree in question (the Tree lemma), whereas any collection of tiles---as in the summation condition for the general Carleson operator---can be recursively divided into trees in such a way that the density and energy are reduced at each step of the iteration (the Density and Energy lemmas). In this section, we adapt these ideas to prove the vector-valued Carleson theorem for large exponents $p$; the general case will require a further elaboration of the argument, which we postpone to the last two sections.

Recall that we want to prove the estimate
\begin{equation*}
  \abs{\pair{Cf}{g}}\lesssim\abs{F}^{1/p}\abs{E}^{1/p'}
\end{equation*}
for all $f\in L^\infty(\R;X)$ and $g\in L^\infty(\R;X^*)$ with $\abs{f}\leq 1_F$ and $\abs{g}\leq 1_E$. Henceforth, we will consider the functions $f,g$ and the measurable sets $F,E$ fixed; the density and energy will depend on them, but we will not indicate it explicitly.

The density of a collection of tiles $\mathbb{P}$ is defined exactly as in the scalar case:
\begin{equation*}
  \operatorname{density}(\mathbb{P}):=\sup_{P\in\mathbb{P}}\sup_{P'\geq P}\int_{E_{P'}}v_{I_{P'}},\qquad
  v_I(x):=\frac{1}{\abs{I}}\Big(1+\frac{\abs{x-c(I)}}{\abs{I}}\Big)^{-10},
\end{equation*}
where $E_{P'}:=E\cap\{x:N(x)\in\omega_{P'}\}$, and $x\mapsto N(x)$ is the arbitrary but fixed measurable functions in the definition of the Carleson operator. In the inner supremum, we go through all tiles $P'$ such that $P'\geq P$.
A trivial bound is $\operatorname{density}(\mathbb{P})\lesssim 1$ for any collection $\mathbb{P}$.

Our definition of energy involves an exponent $q$, which we take to be a Fourier tile-type exponent for the underlying space $X$. In the scalar case, the classical choice is $q=2$, and orthogonality leads to a different but equivalent formulation of the quantity below. In our general setting, the definition of energy is as follows:
\begin{equation*}
  \operatorname{energy}(\mathbb{P})
  :=\sup_{\mathbb{T}\subseteq\mathbb{P}}\Delta(\mathbb{T}),\qquad
  \Delta(\mathbb{T}):=\Big(\frac{1}{\abs{I_{\mathbb{T}}}}\int\Babs{\sum_{P\in\mathbb{T}_u}\pair{f}{\phi_P}\phi_P}^q\Big)^{1/q}.
\end{equation*}
Recall that we apply this to $f\in L^\infty(\R;X)$ with $\abs{f}\leq 1_F$. Then there is a universal upper bound $\operatorname{energy}(\mathbb{P})\lesssim 1$ valid for any collection $\mathbb{P}$. We omit the (reasonably standard) proof at this point, since a more general upper bound is established in Corollary~\ref{cor:impEnergy}.

%
%

We turn to the three key lemmas.

\begin{proposition}[Tree lemma]
If $\mathbb{T}$ is a tree, then
\begin{equation*}
  \sum_{P\in\mathbb{T}}
  \abs{\pair{f}{\phi_P}\pair{\phi_P}{ g 1_{\{N(\cdot)\in \omega_{P_u}\}}}}
  \lesssim\operatorname{density}(\mathbb{T})\operatorname{energy}(\mathbb{T})\abs{I_{\mathbb{T}}},
\end{equation*}
for $f\in L^\infty(\R;X)$ and $g\in L^\infty(\R;X^*)$ with $\abs{f}\leq 1_F$ and $\abs{g}\leq 1_E$.
\end{proposition}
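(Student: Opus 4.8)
The plan is to split the tree $\mathbb{T}$ into its down-tree part and up-tree part and treat them separately, following the standard Lacey--Thiele scheme but substituting Fourier tile-type and the UMD estimates of Proposition~\ref{prop:czo} wherever orthogonality would classically be used. Write $\mathbb{T}=\mathbb{T}_{\good}\cup\mathbb{T}_{\bad}$ where $\mathbb{T}_{\good}$ consists of the tiles $P$ with $P_u\leq T_u$ (an up-tree) and $\mathbb{T}_{\bad}$ of those with $P_d\leq T_d$ (a down-tree), so it suffices to prove the bound for each piece. For a down-tree, the frequency intervals $\omega_{P_u}$ are \emph{lacunary}, so the sets $\{N(\cdot)\in\omega_{P_u}\}$ have bounded overlap and the kernel estimates for $A_{\mathbb{T}}$ from Proposition~\ref{prop:czo}(1) give pointwise decay; this case is classical and essentially does not see the Banach space beyond $\abs{f},\abs{g}$ bounds, so I would dispatch it quickly by grouping tiles according to the scale of $I_P$ relative to the stopping structure of $E$, exactly as in \cite{LT:MRL} or \cite{Thiele:book}.

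The heart of the matter is the up-tree $\mathbb{T}_u$. Here I would first apply Cauchy--Schwarz (or rather H\"older with exponents $q,q'$) in the form
\begin{equation*}
  \sum_{P\in\mathbb{T}_u}\abs{\pair{f}{\phi_P}\pair{\phi_P}{g1_{\{N(\cdot)\in\omega_{P_u}\}}}}
  \leq\Delta(\mathbb{T})\abs{I_{\mathbb{T}}}^{1/q}\cdot
  \Big(\sum_{P\in\mathbb{T}_u}\frac{\abs{\pair{\phi_P}{g1_{\{N(\cdot)\in\omega_{P_u}\}}}}^{q'}}{\abs{I_P}^{?}}\cdots\Big)^{1/q'},
\end{equation*}
but more cleanly: pair $\sum_{P\in\mathbb{T}_u}\pair{f}{\phi_P}\phi_P$ against $g$ localized to the relevant frequency sets, using that $\Delta(\mathbb{T})\leq\operatorname{energy}(\mathbb{T})$ controls the $L^q$-norm of the wave-packet sum. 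The remaining factor, a dual sum involving $g$ and the sets $E_{P'}$, is where the density enters; the goal is to show it is $\lesssim\operatorname{density}(\mathbb{T})\abs{I_{\mathbb{T}}}^{1/q'}$. This is done by organizing the tiles of the up-tree into subtrees on which $N(x)$ lands in a fixed $\omega_{P_u}$, using that on an up-tree the $\omega_{P_u}$ are \emph{nested} (so for each $x$ there is essentially one relevant scale once $N(x)$ is fixed), and then invoking the density bound $\int_{E_{P'}}v_{I_{P'}}\leq\operatorname{density}(\mathbb{T})$ together with a maximal-function/Carleson-measure packing argument over $I_{\mathbb{T}}$. Throughout, the UMD-boundedness of $A_{\mathbb{T}_u}$ on $L^{q'}(\R;X^*)$ and the unimodular-multiplier estimate of Proposition~\ref{prop:czo}(3) are used to absorb the cutoffs $1_{\{N(\cdot)\in\omega_{P_u}\}}$ and to pass from the discrete $\ell^{q'}$-sum back to a single $L^{q'}$-norm of $g$ on $I_{\mathbb{T}}$, which is $\leq\abs{I_{\mathbb{T}}}^{1/q'}$ since $\abs{g}\leq 1_E\leq 1$.

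The main obstacle, and the step I would spend the most care on, is precisely this reduction of the $g$-side to $\operatorname{density}(\mathbb{T})\abs{I_{\mathbb{T}}}^{1/q'}$: in the scalar $L^2$ theory one uses orthogonality and a clean square-function bound, but here one must instead route everything through the Calder\'on--Zygmund operators $A_{\mathbb{T}_u}$ and their $L^p(\R;X)$-boundedness, keeping track that the operator norms are uniform over the up-tree (as asserted in Proposition~\ref{prop:czo}), and handle the tails of the wave packets $\phi_P$ outside $I_{\mathbb{T}}$ via the rapid decay encoded in $v_{I_P}$. A secondary technical point is the interface between the good and bad parts --- one must check that a tile genuinely decomposes so that $P_u\leq T_u$ or $P_d\leq T_d$ holds, which is immediate from the definition $P\leq T\Leftrightarrow P_d\leq T_d$ or $P_u\leq T_u$ given in the Trees section, and that the overlap between the two families costs only a constant. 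Note that since \emph{Fourier tile-type is applied exactly once} in the whole proof (at the level of the Energy lemma, not here), in this lemma $q$ is merely a fixed exponent and all estimates are the expected H\"older-type and Carleson-packing bounds; the novelty relative to the scalar case is bookkeeping with the vector-valued CZO bounds rather than any genuinely new inequality.
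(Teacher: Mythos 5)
Your high-level plan (split into up- and down-tree parts, treat the down-tree classically, and obtain the up-tree estimate from an $(L^q,L^{q'})$-type H\"older pairing density against energy) identifies the right ingredients, but the way you propose to assemble them for the up-tree has a genuine gap that the paper's argument is specifically built to avoid.

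First, the discrete H\"older you sketch,
\begin{equation*}
  \sum_{P\in\mathbb{T}_u}\abs{\pair{f}{\phi_P}\pair{\phi_P}{g1_{E_{P_u}}}}
  \leq\Delta(\mathbb{T})\abs{I_{\mathbb{T}}}^{1/q}\cdot\big(\cdots\big)^{1/q'},
\end{equation*}
cannot be made to produce the energy: in the vector-valued setting $\Delta(\mathbb{T})$ is by definition the $L^q(\R;X)$ norm of the \emph{function} $\sum_P\pair{f}{\phi_P}\phi_P$, not an $\ell^q$-norm of the coefficients. These coincide when $X$ is Hilbertian and $q=2$ by orthogonality, but in general there is no inequality bounding a weighted $\ell^q$ coefficient sum by $\Delta(\mathbb{T})\abs{I_{\mathbb{T}}}^{1/q}$, so a purely coefficient-level H\"older has no way to reach the energy. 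The only access to the energy is through the wave-packet sum as a function. Second, the density must enter \emph{linearly}; any route that obtains the $g$-side by an $L^{q'}$-measure estimate on the sets $E_{P_u}$ will only produce $\operatorname{density}(\mathbb{T})^{1/q'}$, which is too weak.

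The paper resolves both points by ordering the steps differently. One first partitions $\R$ into maximal dyadic intervals $J$ with $2J\not\supseteq I_P$ for all $P\in\mathbb{T}$. On each $J\subseteq 5I_{\mathbb{T}}$ the truncation $F_J=\sum_{\abs{I_P}>\abs{J}}\eps_P\pair{f}{\phi_P}\phi_P 1_{E_{P_u}}$ is supported on a set $G_J$ with $\abs{G_J}\lesssim\operatorname{density}(\mathbb{T})\abs{J}$, and one uses the $(L^1,L^\infty)$ inequality $\Norm{F_J}{L^1(J)}\leq\abs{G_J}\Norm{F_J}{L^\infty(J)}$ --- this $(L^1,L^\infty)$ duality, not an $L^{q'}$ bound, is what yields the density to the first power. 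The $L^\infty$ bound is then obtained from the nesting of the $\omega_{P_u}$ (the telescoping you correctly identified), which represents $F_J$ as a convolution of the full wave-packet sum $\sum_{P\in\mathbb{T}_u}\eps_P\pair{f}{\phi_P}\phi_P$ with a fixed bump, so $\Norm{F_J}{L^\infty(J;X)}\lesssim\inf_J M\big(\sum_{P\in\mathbb{T}_u}\eps_P\pair{f}{\phi_P}\phi_P\big)$. Only after summing over $J$ does the $(L^q,L^{q'})$ H\"older appear, and it is between the maximal function and the constant one on $5I_{\mathbb{T}}$:
\begin{equation*}
  \operatorname{density}(\mathbb{T})\int_{5I_{\mathbb{T}}}M\Big(\sum_{P\in\mathbb{T}_u}\eps_P\pair{f}{\phi_P}\phi_P\Big)
  \lesssim\operatorname{density}(\mathbb{T})\abs{I_{\mathbb{T}}}^{1/q'}\BNorm{\sum_{P\in\mathbb{T}_u}\pair{f}{\phi_P}\phi_P}{L^q(\R;X)},
\end{equation*}
using the maximal inequality and Proposition~\ref{prop:czo}(3) to remove the signs $\eps_P$ (this is the role of Proposition~\ref{prop:czo}(3) here, not absorbing the cutoffs $1_{E_{P_u}}$, which are handled by the telescoping). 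So the factor $\abs{I_{\mathbb{T}}}^{1/q'}$ is just the measure of $5I_{\mathbb{T}}$; in this part of the argument $g$ enters only through $\abs{g}\leq 1$ and through the definition of $E_{P_u}$, not through an $L^{q'}$ norm. Rebuilding your up-tree estimate around the Whitney partition and the $(L^1,L^\infty)$ step will close the gap; the individual tools you list (nesting, maximal function, Proposition~\ref{prop:czo}(3), energy as an $L^q$ norm) are all the right ones, just in a different order.
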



\begin{proof}
This essentially repeats \cite[Sec.~6]{LT:MRL}, and we only give a sketch.

Let $\mathscr{J}$ be the collection of maximal dyadic intervals $J$ with the property that $2J\not\supseteq I_P$ for any $P\in\mathbb{T}$. Then $\mathscr{J}$ is a partition of $\R$ such that every $J\in\mathscr{J}$ satisfies either $J\subseteq 5I_{\mathbb{T}}$, or $\abs{J}\geq 2\abs{I_{\mathbb{T}}}$ and $\tfrac12\abs{J}\leq\dist(J,I_{\mathbb{T}})\leq 2\abs{J}$. It follows that the left side is dominated by
\begin{equation*}
    \sum_{J\in\mathscr{J}}\sum_{\substack{P\in\mathbb{T}\\ \abs{I_P}\leq\abs{J}}}\abs{\pair{f}{\phi_P}}\Norm{\phi_P1_{E_{P_{u}}}}{L^1(J)}
       +\sum_{\substack{J\in\mathscr{J}\\ J\subseteq 5I_{\mathbb{T}}}}\BNorm{\sum_{\substack{P\in\mathbb{T}\\ \abs{I_P}>\abs{J}}}
         \epsilon_P\pair{f}{\phi_P}\phi_P 1_{E_{P_{u}}}}{L^1(J)}
\end{equation*}
for some sings $\epsilon_P$ taking care of the ratio of $\pair{f}{\phi_P}\pair{\phi_P}{ g 1_{\{N(\cdot)\in \omega_{P_u}\}}}$ and its absolute value.
The first part only involves the size of individual coefficients
\begin{equation}\label{eq:naiveEnergy}
  \abs{\pair{f}{\phi_P}}\leq\abs{I_P}^{1/2}\operatorname{energy}(\mathbb{T}), 
\end{equation}
and is estimated verbatim to the scalar case \cite[Sec.~6]{LT:MRL}.

In the second part, the support of the function inside the $L^1(J)$ norm is contained in a set $G_J$ with $\abs{G_J}\lesssim\operatorname{density}(\mathbb{T})\abs{J}$---an algebraic property inherited from the scalar case---, and we estimate $\Norm{F_J}{L^1(J)}\leq\abs{G_J}\Norm{F_J}{L^\infty(J)}$. Splitting $\mathbb{T}=\mathbb{T}_d\cup\mathbb{T}_u$ into a down-tree and an up-tree, the part $\mathbb{T}_d$ also needs the na\"ive bound \eqref{eq:naiveEnergy} only, still verbatim to \cite[Sec.~6]{LT:MRL}.

In the final case, $P\in\mathbb{T}_u$, the sets $E_{P_u}$ are nested, and we have
\begin{equation*}
\begin{split}
  F_J(x): &=\sum_{\substack{P\in\mathbb{T}_u\\ \abs{I_P}>\abs{J}}}
  \epsilon_P\pair{f}{\phi_P}\phi_P(x) 1_{E_{P_{u}}}(x)
  =\sum_{\substack{P\in\mathbb{T}_{u}\\ \abs{J}<\abs{I_P}\leq\abs{I_x}}}\epsilon_P\pair{f}{\phi_P}\phi_P(x) \\
  &=\operatorname{Mod}_{c(\omega_{\mathbb{T}_u})}\Big((D^1_{\abs{J}}\chi-D^1_{\abs{I_x}}\chi)*
     \operatorname{Mod}_{-c(\omega_{\mathbb{T}_u})}\sum_{P\in\mathbb{T}_u}
     \epsilon_P\pair{f}{\phi_P}\phi_P\Big)(x),
\end{split}
\end{equation*}
for a suitable length $\abs{I_x}$ and a frequency-localized $\chi\in\mathscr{S}(\R)$. Hence
\begin{equation*}
  \Norm{F_J}{L^\infty(J;X)}
  \lesssim\inf_J M\Big(\sum_{P\in\mathbb{T}_u}
     \epsilon_P\pair{f}{\phi_P}\phi_P\Big),
\end{equation*}
and finally, by the disjointness of $J\in\mathscr{J}$, H\"older's inequality, the maximal inequality, Proposition~\ref{prop:czo}(3) and the definition of energy, we have
\begin{equation*}
\begin{split}
  \sum_{\substack{J\in\mathscr{J}\\ J\subseteq 5I_{\mathbb{T}}}} &\Norm{F_J}{L^1(J;X)} 
  \lesssim \sum_{\substack{J\in\mathscr{J}\\ J\subseteq 5I_{\mathbb{T}}}} \operatorname{density}(\mathbb{T})\abs{J}\times \inf_J
     M\Big(\epsilon_P\sum_{P\in\mathbb{T}_{u}}\pair{f}{\phi_P}\phi_P\Big) \\
    &\leq\operatorname{density}(\mathbb{T})\int_{5I_{\mathbb{T}}}M\Big(\sum_{P\in\mathbb{T}_{u}}
        \epsilon_P\pair{f}{\phi_P}\phi_P\Big)(x)\ud x \\
   &\lesssim\operatorname{density}(\mathbb{T})\abs{I_{\mathbb{T}}}^{1/q'}\BNorm{\sum_{P\in\mathbb{T}_{u}}
       \epsilon_P\pair{f}{\phi_P}\phi_P}{L^q(\R;X)} \\
   &\lesssim\operatorname{density}(\mathbb{T})\abs{I_{\mathbb{T}}}^{1/q'}\BNorm{\sum_{P\in\mathbb{T}_{u}}
       \pair{f}{\phi_P}\phi_P}{L^q(\R;X)} \\
   &\leq\operatorname{density}(\mathbb{T})\abs{I_{\mathbb{T}}}^{1/q'}\operatorname{energy}(\mathbb{T})\abs{I_{\mathbb{T}}}^{1/q}.\qedhere
\end{split}
\end{equation*}
\end{proof}


\begin{proposition}[Density lemma]
If $\mathbb{P}$ is a finite collection of tiles, then we have a splitting
\begin{equation*}
  \mathbb{P}=\mathbb{P}_{\operatorname{sparse}}\cup\bigcup_j\mathbb{T}_j,
\end{equation*}
where $\operatorname{density}(\mathbb{P}_{\operatorname{sparse}})\leq 2^{-1}\operatorname{density}(\mathbb{P})$ and
\begin{equation*}
  \sum_j \abs{I_{\mathbb{T}_j}}\lesssim\operatorname{density}(\mathbb{P})^{-1}\abs{E}.
\end{equation*}
\end{proposition}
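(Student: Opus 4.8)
The plan is to note at the outset that the density is a purely scalar quantity: it depends only on the sets $E$, $F$ and on the fixed measurable function $N$, and not at all on the Banach space $X$. Hence this is exactly the classical density lemma of Lacey--Thiele, and I would run the selection and counting as in \cite[Sec.~7]{LT:MRL} (see also \cite{Thiele:book}). Writing $\delta:=\operatorname{density}(\mathbb{P})$, set
\begin{equation*}
  \mathbb{P}_{\operatorname{big}}:=\Big\{P\in\mathbb{P}: \text{there is a tile } P'\geq P \text{ with } \int_{E_{P'}}v_{I_{P'}}>\tfrac12\delta\Big\},\qquad \mathbb{P}_{\operatorname{sparse}}:=\mathbb{P}\setminus\mathbb{P}_{\operatorname{big}}.
\end{equation*}
By the very definition of $\mathbb{P}_{\operatorname{sparse}}$, every $P\in\mathbb{P}_{\operatorname{sparse}}$ satisfies $\int_{E_{P'}}v_{I_{P'}}\leq\tfrac12\delta$ for all tiles $P'\geq P$, and since the inner supremum in the definition of density runs over all such $P'$, this gives $\operatorname{density}(\mathbb{P}_{\operatorname{sparse}})\leq\tfrac12\delta$ immediately. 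All the content therefore lies in covering $\mathbb{P}_{\operatorname{big}}$ by trees with a controlled sum of top lengths.

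To do that, I would assign to each $P\in\mathbb{P}_{\operatorname{big}}$ a witness tile $\widehat{P}\geq P$ with $\int_{E_{\widehat P}}v_{I_{\widehat P}}>\tfrac12\delta$, and put $\mathbb{Q}:=\{\widehat{P}:P\in\mathbb{P}_{\operatorname{big}}\}$. From $\int_{E_Q}v_{I_Q}\leq\Norm{v_{I_Q}}{\infty}\abs{E}=\abs{E}/\abs{I_Q}$ we get $\abs{I_Q}<2\abs{E}/\delta$; together with $\abs{I_Q}\geq\min_{P\in\mathbb{P}}\abs{I_P}>0$ and $I_Q\supseteq I_P$ for some $P\in\mathbb{P}$ (which is finite), this restricts $\abs{I_Q}$ to finitely many dyadic lengths, $I_Q$ to finitely many positions, and then $\omega_Q$ (of length $1/\abs{I_Q}$, contained in the corresponding $\omega_P$) to finitely many choices; hence $\mathbb{Q}$ is finite. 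Let $\{T_j\}_j$ be the maximal elements of the finite poset $(\mathbb{Q},\leq)$; every element of $\mathbb{Q}$ lies $\leq$ some $T_j$. Define $\mathbb{T}_j:=\{P\in\mathbb{P}_{\operatorname{big}}: P\leq T_j \text{ and } P\not\leq T_{j'}\ \text{for all}\ j'<j\}$. Then each $\mathbb{T}_j$ is a tree with top $T_j$ (so $I_{\mathbb{T}_j}\subseteq I_{T_j}$), the $\mathbb{T}_j$ are pairwise disjoint, and they cover $\mathbb{P}_{\operatorname{big}}$: any $P\in\mathbb{P}_{\operatorname{big}}$ has $P\leq\widehat{P}\leq T_j$ for some $j$.

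It remains to bound $\sum_j\abs{I_{T_j}}$. Here I would use the witness property $\int_{E_{T_j}}v_{I_{T_j}}>\tfrac12\delta$: since the contribution of $v_{I_{T_j}}$ from the $k$-th dyadic annulus around $I_{T_j}$ is $O(2^{-9k})$, a truncation to the dilate $2^K I_{T_j}$ with $K$ of size $\log\tfrac1\delta$ still captures mass $>\tfrac14\delta$, whence, using $\abs{I_{T_j}}v_{I_{T_j}}\leq 1$, $\delta\abs{I_{T_j}}\lesssim\abs{E_{T_j}\cap 2^K I_{T_j}}$. Summing in $j$ and invoking the standard scalar bounded-overlap estimate for the sets $E_{T_j}\cap 2^K I_{T_j}$---which exploits that for a fixed $x$ the frequency intervals $\omega_{T_j}$ containing $N(x)$ are nested while maximality of the $T_j$ forces the associated time intervals $I_{T_j}$ to be pairwise disjoint with geometrically growing lengths (so that, after also ordering the $T_j$ by decreasing $\abs{I_{T_j}}$, the overlap is controlled)---gives $\sum_j\abs{I_{T_j}}\lesssim\delta^{-1}\abs{E}$. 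I expect this last combinatorial/geometric overlap estimate to be the only genuinely substantial ingredient; however, it is purely a statement about the scalar arrangement of tiles, $E$ and $N$, and is taken over unchanged from \cite[Sec.~7]{LT:MRL} (in sharp contrast with the energy, which is exactly where the Fourier tile-type of $X$ will eventually be used).
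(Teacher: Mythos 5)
Your proposal is correct and takes essentially the same route as the paper: the paper's entire proof of the Density Lemma is the single sentence ``This purely scalar result is \cite[Proposition 3.1]{LT:MRL},'' and you correctly identify that the density depends only on the scalar data $E$, $N$, and the tiles, then sketch the Lacey--Thiele selection and counting, explicitly deferring the final bounded-overlap estimate to \cite{LT:MRL} just as the paper does.
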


\begin{proof}
This purely scalar result is \cite[Proposition 3.1]{LT:MRL}.
\end{proof}

The Fourier tile-type makes its one and only appearance in the next result:

\begin{proposition}[Energy lemma]
Let $X$ have Fourier tile-type $(q,\alpha)$.
If $\mathbb{P}$ is a finite collection of tiles, then we have a splitting
\begin{equation*}
  \mathbb{P}=\mathbb{P}_{\operatorname{small}}\cup\bigcup_j\mathbb{T}_j,
\end{equation*}
where $\operatorname{energy}(\mathbb{P}_{\operatorname{small}})\leq 2^{-1}\operatorname{energy}(\mathbb{P})$ and
\begin{equation*}
  \sum_j \abs{I_{\mathbb{T}_j}}\lesssim\operatorname{energy}(\mathbb{P})^{-q/\alpha}\abs{F}.
\end{equation*}
\end{proposition}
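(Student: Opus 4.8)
The plan is to run the standard greedy selection of trees that decreases the energy, exactly as in the Lacey--Thiele scheme, but to quantify the output using the Fourier tile-type $(q,\alpha)$ inequality in place of the $L^2$-orthogonality used in the scalar case. Write $\mathcal{E}:=\operatorname{energy}(\mathbb{P})$. By definition there is, for each tile collection, a tree attaining (or nearly attaining) the supremum in the definition of energy; the iteration selects trees with $\Delta(\mathbb{T})>\tfrac12\mathcal{E}$ one at a time. The first step is to set up this selection: so long as $\mathbb{P}$ still contains a tree $\mathbb{T}$ with $\Delta(\mathbb{T}_u)>\tfrac12\mathcal{E}$, pick such a tree $\mathbb{T}_j$ with $I_{\mathbb{T}_j}$ \emph{maximal} (among all candidate trees), record it, remove all tiles $P\in\mathbb{P}$ with $I_P\subseteq I_{\mathbb{T}_j}$ and $\omega_P\supseteq\omega_{\mathbb{T}_j}$ (or more precisely the standard convex-closure enlargement used in \cite{LT:MRL}) from $\mathbb{P}$, and iterate. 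Since $\mathbb{P}$ is finite, this terminates; what remains is $\mathbb{P}_{\operatorname{small}}$, and by construction $\operatorname{energy}(\mathbb{P}_{\operatorname{small}})\leq\tfrac12\mathcal{E}$. It remains only to bound $\sum_j\abs{I_{\mathbb{T}_j}}$.

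The heart of the matter is the geometric fact—entirely combinatorial and inherited verbatim from \cite[Proof of Prop.~3.1 / Energy lemma]{LT:MRL}—that the selected up-trees $\mathbb{T}_j$ (after passing to their up-tree parts, which is where the $\Delta$ lower bound lives) satisfy the disjointness property \eqref{eq:disjProp}: if $P\in\mathbb{T}_j$ and $P'\in\mathbb{T}_{j'}$ with $\omega_P\subseteq\omega_{P'_d}$, then $I_{P'}\cap I_{\mathbb{T}_j}=\varnothing$. This is exactly the property guaranteeing, via the maximality in the selection and the order in which trees are removed, that a later-selected tile cannot sit above an earlier tree's frequency interval while overlapping its time support. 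Granting this, the collection $\mathscr{T}:=\{\mathbb{T}_{j,u}\}_j$ is admissible for the Fourier tile-type inequality. Apply \eqref{eq:FtileType} to $f$ (with $\abs{f}\leq 1_F$, so $\Norm{f}{\infty}\leq1$ and $\Norm{f}{L^q}^q\leq\abs{F}$):
\begin{equation*}
  \Big(\sum_j\BNorm{\sum_{P\in\mathbb{T}_{j,u}}\pair{f}{\phi_P}\phi_P}{L^q(\R;X)}^q\Big)^{1/q}
  \leq C\abs{F}^{1/q}+C\Big(\Big[\sum_j\abs{I_{\mathbb{T}_j}}\Big]^{1/q}\Big)^{1-\alpha}\abs{F}^{\alpha/q}.
\end{equation*}
On the other hand, each selected tree obeys $\BNorm{\sum_{P\in\mathbb{T}_{j,u}}\pair{f}{\phi_P}\phi_P}{L^q(\R;X)}^q=\abs{I_{\mathbb{T}_j}}\,\Delta(\mathbb{T}_{j,u})^q\geq\abs{I_{\mathbb{T}_j}}(\mathcal{E}/2)^q$, so the left side is at least $(\mathcal{E}/2)\big(\sum_j\abs{I_{\mathbb{T}_j}}\big)^{1/q}$. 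Writing $S:=\sum_j\abs{I_{\mathbb{T}_j}}$, this yields $\mathcal{E}\,S^{1/q}\lesssim\abs{F}^{1/q}+S^{(1-\alpha)/q}\abs{F}^{\alpha/q}$.

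Finally one solves this scalar inequality for $S$. Split into cases according to which term on the right dominates. If the first term dominates, $\mathcal{E}\,S^{1/q}\lesssim\abs{F}^{1/q}$ gives $S\lesssim\mathcal{E}^{-q}\abs{F}$, which is stronger than claimed (recall $\mathcal{E}\lesssim1$ and $0<\alpha<1$, so $\mathcal{E}^{-q}\leq\mathcal{E}^{-q/\alpha}$). If the second term dominates, $\mathcal{E}\,S^{1/q}\lesssim S^{(1-\alpha)/q}\abs{F}^{\alpha/q}$, i.e.\ $\mathcal{E}\,S^{\alpha/q}\lesssim\abs{F}^{\alpha/q}$, whence $S^{\alpha/q}\lesssim\mathcal{E}^{-1}\abs{F}^{\alpha/q}$ and thus $S\lesssim\mathcal{E}^{-q/\alpha}\abs{F}$, as asserted. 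The main obstacle—and the only part requiring genuine care rather than bookkeeping—is verifying that the greedy selection, run with maximal time intervals and the correct tile-removal rule, produces a family satisfying \eqref{eq:disjProp}; this is standard in the scalar theory (it is precisely the configuration ruled out when one always picks the lowest-frequency, largest-time tree first), and I would carry it over by citing \cite[Sec.~3]{LT:MRL} or \cite{Thiele:book} after spelling out the removal rule, since nothing about it is sensitive to the scalar-versus-Banach distinction.
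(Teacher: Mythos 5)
Your overall architecture is exactly the paper's: greedy selection of high-$\Delta$ trees, verify that the selected up-trees form an admissible collection for \eqref{eq:disjProp}, apply the Fourier tile-type inequality with $\Norm{f}{\infty}\leq1$ and $\Norm{f}{q}^q\leq\abs{F}$, and solve the resulting scalar inequality for $S=\sum_j\abs{I_{\mathbb{T}_j}}$. The last two steps are carried out correctly and match the paper.

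The gap is in the one step you flagged as the heart of the matter: the selection rule. You select a tree with \emph{maximal time interval} $I_{\mathbb{T}_j}$, whereas the paper (following Lacey--Thiele) selects a tree whose top has \emph{minimal frequency center} $c(\omega_{\mathbb{T}})$. These are not the same criterion, and the maximal-time rule does \emph{not} yield \eqref{eq:disjProp}. Concretely, suppose $P\in\mathbb{T}_{j,u}$ and $P'\in\mathbb{T}_{j',u}$ with $\omega_P\subseteq\omega_{P'_d}$. One computes, exactly as in the standard argument, that $c(\omega_{T_j})<c(\omega_{T_{j'}})$, that $\omega_{P'}\supseteq\omega_{T_j}$, and that if $I_{P'}\cap I_{\mathbb{T}_j}\neq\varnothing$ then $I_{P'}\subseteq I_{\mathbb{T}_j}$, hence $P'\leq T_j$. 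With the \emph{frequency-first} rule this forces a contradiction: $\mathbb{T}_j$ is selected before $\mathbb{T}_{j'}$, so the maximal tree with top $T_j$ would have absorbed $P'$ before $\mathbb{T}_{j'}$ ever got to it. With your \emph{time-first} rule, nothing prevents $\abs{I_{\mathbb{T}_{j'}}}>\abs{I_{\mathbb{T}_j}}$, in which case $\mathbb{T}_{j'}$ is selected first, $P'$ is legitimately absorbed there, and later $\mathbb{T}_j$ picks up $P$ --- and the offending time overlap $I_{P'}\cap I_{\mathbb{T}_j}\neq\varnothing$ persists. It is easy to realize this configuration with explicit dyadic rectangles, so the claimed disjointness genuinely fails for your rule; it is not a presentational issue. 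You even hint at the correct criterion in your closing parenthetical (``lowest-frequency, largest-time tree first''), but these two criteria conflict in general, and the formal rule you wrote down is the wrong one of the two. Replace ``with $I_{\mathbb{T}_j}$ maximal'' by ``with $c(\omega_{\mathbb{T}_j})$ minimal'' and the proof goes through and coincides with the paper's.
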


\begin{proof}
Among all maximal trees $\mathbb{T}\subseteq\mathbb{P}$ with $\Delta(\mathbb{T})>\operatorname{energy}(\mathbb{T})/2$, let $\mathbb{T}_1$ be one with the minimal $c(\omega_{\mathbb{T}})$. Remove $\mathbb{T}_1$ from $\mathbb{P}$ and iterate this selection as long as possible. What is left of $\mathbb{P}$ after these removals qualifies for $\mathbb{P}_{\operatorname{small}}$. Let $\mathscr{T}$ be the collection of the up-trees $\mathbb{T}_{j,u}$ corresponding to the chosen trees $\mathbb{T}_j$. By construction,
\begin{equation*}
  \sum_j\abs{I_{\mathbb{T}_j}}
  \lesssim\operatorname{energy}(\mathbb{P})^{-q}\sum_{\mathbb{T}\in\mathscr{T}}\BNorm{\sum_{P\in\mathbb{T}}\pair{f}{\phi_P}\phi_P}{L^q(\R;X)}^q.
\end{equation*}
From the construction one checks that $\mathscr{T}$ satisfies \eqref{eq:disjProp}, so by definition of Fourier tile-type we have
\begin{equation*}
\begin{split}
   \sum_j\abs{I_{\mathbb{T}_j}}
  &\lesssim\operatorname{energy}(\mathbb{P})^{-q}\Big(\Norm{f}{L^q(\R_+;X)}^q+
      \Big[\sum_j\abs{I_{\mathbb{T}_j}}\Big]^{1-\alpha}\Norm{f}{L^\infty(\R_+;X)}^{(1-\alpha)q}\Norm{f}{L^q(\R;X)}^{\alpha q}\Big) \\
  &\lesssim\operatorname{energy}(\mathbb{P})^{-q}\Big(\abs{F}+
      \Big[\sum_j\abs{I_{\mathbb{T}_j}}\Big]^{1-\alpha}\abs{F}^\alpha\Big)
\end{split}
\end{equation*}
If the first term dominates, we get
\begin{equation*}
  \sum_j\abs{I_{\mathbb{T}_j}}\lesssim\operatorname{energy}(\mathbb{P})^{-q}\abs{F},
\end{equation*}
and if the second term dominates,
\begin{equation*}
  \sum_j\abs{I_{\mathbb{T}_j}}\lesssim \operatorname{energy}(\mathbb{P})^{-q/\alpha}\abs{F}.
\end{equation*}
Since $\operatorname{energy}(\mathbb{P})\lesssim 1$, the second bound is always the larger one.
\end{proof}

Iterating the density and energy lemmas in tandem, we obtain the following final form of the decomposition.

\begin{lemma}
If $\mathbb{P}$ is a finite collection of tiles, then we have a splitting
\begin{equation*}
  \mathbb{P}=\bigcup_{n\in\Z}\bigcup_j\mathbb{T}_{n,j}\cup \mathbb{P}_\infty,
\end{equation*}
where $\operatorname{density}(\mathbb{T}_{n,j})\leq\abs{E}2^{-n}$, $\operatorname{energy}(\mathbb{T}_{n,j})\leq\abs{F}^{\alpha/q}2^{-n\alpha/q}$,
\begin{equation*}
  \sum_j \abs{I_{\mathbb{T}_{n,j}}}\lesssim 2^n,
\end{equation*}
and $\operatorname{density}(\mathbb{P}_{\infty})=\operatorname{energy}(\mathbb{P}_{\infty})=0$.
\end{lemma}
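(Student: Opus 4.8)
The plan is to run the Density lemma and the Energy lemma alternately on the "untreated" part of the collection, so that at each stage we peel off a controlled family of trees and the two size parameters halve (or more). More precisely, I would set $\delta_n := \abs{E}2^{-n}$ as the target density levels and $\eta_n$ as the matching energy levels, and maintain the invariant that after stage $n$ the remaining collection $\mathbb{P}^{(n)}$ satisfies $\operatorname{density}(\mathbb{P}^{(n)}) \le \delta_n$ and $\operatorname{energy}(\mathbb{P}^{(n)}) \le \eta_n$. Because the trivial bounds give $\operatorname{density}(\mathbb{P}) \lesssim 1$ and $\operatorname{energy}(\mathbb{P}) \lesssim 1$, one may start the index $n$ at the first value for which $\abs{E}2^{-n} \gtrsim \operatorname{density}(\mathbb{P})$, i.e.\ effectively at $n \approx \log_2 \abs{E}$, and since $\mathbb{P}$ is finite the process terminates with a (possibly empty) remainder $\mathbb{P}_\infty$ on which both parameters vanish.

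The stage-$n$ step goes as follows. Suppose $\operatorname{density}(\mathbb{P}^{(n-1)}) \le \delta_{n-1} = \abs{E}2^{-(n-1)}$. If in fact the density is already $\le \delta_n$, do nothing on the density side; otherwise apply the Density lemma to $\mathbb{P}^{(n-1)}$ to split off trees $\mathbb{T}_j$ with $\sum_j \abs{I_{\mathbb{T}_j}} \lesssim \operatorname{density}(\mathbb{P}^{(n-1)})^{-1}\abs{E} \lesssim \delta_{n-1}^{-1}\abs{E} = 2^{n-1} \lesssim 2^n$, leaving a part with density $\le \tfrac12 \operatorname{density}(\mathbb{P}^{(n-1)}) \le \delta_n$. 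Then apply the Energy lemma to this part: it splits off further trees with $\sum_j \abs{I_{\mathbb{T}_j}} \lesssim \operatorname{energy}^{-q/\alpha}\abs{F}$, and we repeat the energy splitting until the energy drops to $\eta_n$. The natural calibration is to take $\eta_n = \abs{F}^{\alpha/q}2^{-n\alpha/q}$, because then $\operatorname{energy}^{-q/\alpha}\abs{F}$ evaluated at a tree selected at energy level $\approx \eta_n$ is $\approx \abs{F}^{-1} 2^{n} \cdot \abs{F} = 2^n$, matching the target. All the trees removed during stage $n$ are collected as the family $\{\mathbb{T}_{n,j}\}_j$; by construction each has $\operatorname{density}(\mathbb{T}_{n,j}) \le \delta_n = \abs{E}2^{-n}$ and $\operatorname{energy}(\mathbb{T}_{n,j}) \le \eta_n = \abs{F}^{\alpha/q}2^{-n\alpha/q}$, and the two contributions to $\sum_j \abs{I_{\mathbb{T}_{n,j}}}$ are each $\lesssim 2^n$, giving the stated bound.

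The main technical point — and where I expect to spend the most care — is the bookkeeping that keeps the density and energy decrements in lockstep so that the \emph{same} index $n$ controls both, since the Density and Energy lemmas reduce their respective parameters by a fixed factor per application but with incomparable "cost" exponents ($\operatorname{density}^{-1}$ versus $\operatorname{energy}^{-q/\alpha}$). The resolution is exactly the choice $\eta_n = \abs{F}^{\alpha/q}\delta_n^{\alpha/q}/\abs{E}^{\alpha/q} = \abs{F}^{\alpha/q}2^{-n\alpha/q}$, which makes $\operatorname{energy}^{-q/\alpha}\abs{F} \asymp \delta_n^{-1}\abs{E} \asymp 2^n$ at the $n$-th level; one then checks that finitely many energy sub-splittings (at most $O(1)$, because halving repeatedly from $\operatorname{energy} \lesssim 1$ down to $\eta_n$ takes $\lesssim n$ steps but their total measure is a convergent geometric series dominated by $2^n$) suffice. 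A secondary point is to make sure that because $\mathbb{P}$ is finite, every coefficient $\pair{f}{\phi_P}$ is nonzero for only finitely many $P$ and both parameters are bounded below by something positive on any nonempty subcollection unless that subcollection is already "trivial"; hence the iteration stops after finitely many stages with a residual $\mathbb{P}_\infty$ of zero density and zero energy (possibly $\mathbb{P}_\infty = \varnothing$). Apart from this calibration, the argument is a routine iteration, and I would present it as such, citing the Density and Energy lemmas for the per-step estimates.
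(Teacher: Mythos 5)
The paper gives no proof of this lemma, only the remark that it follows by ``iterating the density and energy lemmas in tandem,'' and your proposal correctly fills in that iteration: the invariant $\operatorname{density}(\mathbb{P}^{(n)})\le\abs{E}2^{-n}$, $\operatorname{energy}(\mathbb{P}^{(n)})\le\abs{F}^{\alpha/q}2^{-n\alpha/q}$, the calibration of the energy threshold so that $\eta_n^{-q/\alpha}\abs{F}=2^n=\delta_n^{-1}\abs{E}$, and the observation that the residual has both parameters zero. This is exactly the intended argument. Two small slips are worth tightening.

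First, the chain $\operatorname{density}(\mathbb{P}^{(n-1)})^{-1}\abs{E}\lesssim\delta_{n-1}^{-1}\abs{E}$ runs the wrong way: since the density is \emph{at most} $\delta_{n-1}$, its reciprocal is \emph{at least} $\delta_{n-1}^{-1}$, which is a lower bound. What you actually need is the lower threshold: you only apply the Density lemma when $\operatorname{density}(\mathbb{P}^{(n-1)})>\delta_n$, and then $\operatorname{density}^{-1}\abs{E}<\delta_n^{-1}\abs{E}=2^n$. The same remark applies on the energy side, where the controlling inequality is $\operatorname{energy}>\eta_n$, giving $\operatorname{energy}^{-q/\alpha}\abs{F}<\eta_n^{-q/\alpha}\abs{F}=2^n$. (Both cost functions are decreasing in their argument, so the lower threshold is the one that matters.)

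Second, the parenthetical about ``finitely many energy sub-splittings'' and a geometric series is unnecessary and somewhat misleading. A single application of the Energy lemma per stage is enough: it halves the energy, and $\eta_n/\eta_{n+1}=2^{\alpha/q}\le 2$, so halving from $\le\eta_n$ already drops below $\eta_{n+1}$. This is in fact precisely why one can use the same index $n$ for both parameters: each lemma reduces its parameter by a factor $2$, and both required decrements per stage are by a factor at most $2$, so a single alternation maintains both halves of the invariant simultaneously while each step costs $\lesssim 2^n$. With this cleanup the proof is complete; the remark about the finiteness of $\mathbb{P}$ forcing $\operatorname{density}(\mathbb{P}_\infty)=\operatorname{energy}(\mathbb{P}_\infty)=0$ is correct, since a finite collection has only finitely many subcollections and hence finitely many attainable values of density and energy, and eventually the thresholds $\delta_n,\eta_n$ fall below all the nonzero ones.
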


We are ready to complete the proof of Theorem~\ref{thm:main} for large values of $p$:

\begin{proposition}\label{prop:largePCarleson}
Let $X$ have Fourier tile-type $(q,\alpha)$.
Then the Carleson operator is bounded from $L^{p,1}(\R;X)$ to $L^{p,\infty}(\R;X)$ for $p\in[q/\alpha)$; hence, by interpolation, on $L^{p}(\R;X)$ for every $p\in(q/\alpha,\infty)$. In particular, if $X$ has Fourier tile-type $q$, then the Carleson operator is bounded on $L^p(\R;X)$ for every $p\in(q,\infty)$.
\end{proposition}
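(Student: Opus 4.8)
The plan is to run the standard Lacey--Thiele stopping-time argument, using the final decomposition lemma together with the Tree lemma. Fix $f\in L^\infty(\R;X)$, $g\in L^\infty(\R;X^*)$ with $\abs{f}\le 1_F$ and $\abs{g}\le 1_E$, and fix a finite collection $\mathbb{P}$ of tiles (so that the Carleson sum $\pair{Cf}{g}$ is a finite sum over $\mathbb{P}$; the final bound will be uniform in $\mathbb{P}$). Apply the last lemma to write $\mathbb{P}=\bigcup_{n\in\Z}\bigcup_j\mathbb{T}_{n,j}\cup\mathbb{P}_\infty$; since $\mathbb{P}_\infty$ has zero density and energy, its contribution vanishes by the Tree lemma, so only the $\mathbb{T}_{n,j}$ matter. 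For each of these trees the Tree lemma gives
\begin{equation*}
  \sum_{P\in\mathbb{T}_{n,j}}\abs{\pair{f}{\phi_P}\pair{\phi_P}{g1_{\{N(\cdot)\in\omega_{P_u}\}}}}
  \lesssim \operatorname{density}(\mathbb{T}_{n,j})\operatorname{energy}(\mathbb{T}_{n,j})\abs{I_{\mathbb{T}_{n,j}}}
  \lesssim \abs{E}2^{-n}\cdot\abs{F}^{\alpha/q}2^{-n\alpha/q}\abs{I_{\mathbb{T}_{n,j}}}.
\end{equation*}

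Next I would sum over $j$ using the packing bound $\sum_j\abs{I_{\mathbb{T}_{n,j}}}\lesssim 2^n$ from the lemma, which collapses the $j$-sum and gives, for each fixed $n$, a contribution $\lesssim \abs{E}\abs{F}^{\alpha/q}2^{-n\alpha/q}$. A competing, cruder estimate for each fixed $n$ is obtained by using the trivial bounds $\operatorname{density}\lesssim\min(1,\abs{E}2^{-n})$ only through the $L^\infty$ size of $g$ on the set $E_{P_u}$ together with the density-type control of the supports: more precisely, one also has the bound $\sum_{P\in\mathbb{T}_{n,j}}\abs{\pair{f}{\phi_P}\pair{\phi_P}{g1_{E_{P_u}}}}\lesssim\abs{F}\cdot(\text{something}\lesssim\abs{I_{\mathbb{T}_{n,j}}})$ coming from $\abs{f}\le 1_F$ and $\operatorname{energy}\lesssim1$; summing over $j$ then gives $\lesssim 2^n$ times the measure factors, i.e.\ roughly $\lesssim\min(\abs{E},\abs{F})\cdot$ (bounded). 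The point is to have \emph{two} bounds for the fixed-$n$ slice, one decaying in $2^n$ and one decaying in $2^{-n}$ (after inserting the trivial $\operatorname{density}\lesssim1$ gives $2^n$ growth, so one must instead use $\operatorname{energy}(\mathbb{T}_{n,j})\le\abs{F}^{\alpha/q}2^{-n\alpha/q}$ against $\sum_j\abs{I_{\mathbb{T}_{n,j}}}\lesssim\min(2^n,\operatorname{density}^{-1}\abs{E})\lesssim\min(2^n,2^n)$ — so the genuinely decaying-in-both-directions estimate comes from playing the two packing bounds $\sum_j\abs{I_{\mathbb{T}_{n,j}}}\lesssim 2^n$ and $\sum_j\abs{I_{\mathbb{T}_{n,j}}}\lesssim\operatorname{energy}(\mathbb{P})^{-q/\alpha}\abs{F}$ at the stopping level off against the decay of $\operatorname{density}$). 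Concretely, one uses the $n$-slice bound $\lesssim\abs{E}2^{-n}\operatorname{energy}(\mathbb{T}_{n,j})\sum_j\abs{I_{\mathbb{T}_{n,j}}}$ for large $n$ and $\lesssim\operatorname{energy}(\mathbb{T}_{n,j})\,\abs{F}^{\,?}$-type summation for small $n$, and geometric summation in $n$ then yields
\begin{equation*}
  \abs{\pair{Cf}{g}}\lesssim \sum_{n}\min\!\big(\abs{E}2^{-n},\ \abs{F}^{\alpha/q}2^{n(1-\alpha/q)}\big)^{\!*}
  \lesssim \abs{F}^{1/p}\abs{E}^{1/p'}
\end{equation*}
for all $p>q/\alpha$ — the exponent range being exactly where the geometric series converges — which by the reduction recalled at the end of Section~3 is equivalent to $C:L^{p,1}(\R;X)\to L^{p,\infty}(\R;X)$, and then real interpolation upgrades this to $L^p(\R;X)$-boundedness for $p\in(q/\alpha,\infty)$. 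Finally, if $X$ has Fourier tile-type $q$, we may take $\alpha\in(0,1)$ arbitrarily close to $1$, so $q/\alpha$ ranges over all numbers greater than $q$, giving $L^p$-boundedness for every $p\in(q,\infty)$.

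The routine parts are the geometric summation in $n$ and the precise bookkeeping of which trivial bound to use in which regime; these are identical to the scalar argument in \cite[Section~3]{LT:MRL} once the Tree, Density, and Energy lemmas are in hand, the only change being that the exponent $q/\alpha$ (rather than $2$) enters because our Energy lemma loses a power $1/\alpha$ and our energy is built on the $L^q$ rather than $L^2$ norm. The main obstacle — and the reason this is only the ``large $p$'' case — is that the argument as written only reaches $p>q/\alpha\ge q>1$ and gives nothing for small $p$; pushing down to all $p\in(1,\infty)$ requires the improved square-function estimates promised for the last sections and is not attempted here.
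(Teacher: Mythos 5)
Your overall strategy is the same as the paper's (decomposition lemma, Tree lemma, packing bound, two-sided geometric summation in $n$), and your conclusion is the correct one. But the summation step — the only nontrivial part of this proof once the three lemmas are in hand — is muddled, and the formula you ultimately write down is wrong.

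The correct assembly is to bound each $n$-slice by the \emph{product} of both trivially-capped bounds times the single packing bound $\sum_j\abs{I_{\mathbb{T}_{n,j}}}\lesssim 2^n$:
\begin{equation*}
  \sum_j \operatorname{density}(\mathbb{T}_{n,j})\operatorname{energy}(\mathbb{T}_{n,j})\abs{I_{\mathbb{T}_{n,j}}}
  \lesssim \min\{1,\abs{E}2^{-n}\}\cdot\min\{1,\abs{F}^{\alpha/q}2^{-n\alpha/q}\}\cdot 2^n,
\end{equation*}
i.e.\ you use both the refined density/energy bounds \emph{and} the universal bounds $\lesssim 1$ simultaneously, never trading one off against the other. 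Your displayed formula $\sum_n\min\bigl(\abs{E}2^{-n},\ \abs{F}^{\alpha/q}2^{n(1-\alpha/q)}\bigr)$ absorbs the $2^n$ into one arm of a single $\min$, and it does \emph{not} dominate the left side. For instance, in the crucial regime $\abs{F}<2^n\le\abs{E}$ (the regime that determines the threshold in $p$), the true slice contribution is $\abs{F}^{\alpha/q}2^{n(1-\alpha/q)}$, which can be $\gg1$, while your $\min$ caps it at $\abs{E}2^{-n}\le 1$; so your formula is too small to be a valid upper bound, and if you compute with it you in fact get a different product of powers of $\abs{E}$ and $\abs{F}$ whose exponents do not even add to~$1$. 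The parenthetical in your middle paragraph, ending with the vacuous ``$\lesssim\min(2^n,2^n)$,'' is symptomatic of the same confusion: you are searching for a second, ``growing'' estimate for the slice, but no such separate estimate is needed — the growth in $2^n$ from the packing bound is tamed precisely because the density and energy mins both kick in as $2^n$ increases.

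To finish correctly: with the product formula, split into cases. If $\abs{E}\le\abs{F}$ the three ranges $2^n\le\abs{E}$, $\abs{E}\le2^n\le\abs{F}$, $2^n\ge\abs{F}$ give $\lesssim\abs{E}\bigl(1+\log(\abs{F}/\abs{E})\bigr)\lesssim\abs{F}^{1/p}\abs{E}^{1/p'}$ for \emph{every} $p\in(1,\infty)$. If $\abs{E}>\abs{F}$ the three ranges $2^n\le\abs{F}$, $\abs{F}\le2^n\le\abs{E}$, $2^n\ge\abs{E}$ give $\lesssim\abs{F}^{\alpha/q}\abs{E}^{1-\alpha/q}$, and this is $\lesssim\abs{F}^{1/p}\abs{E}^{1/p'}$ iff $p\ge q/\alpha$ — this is the case that produces the threshold. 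Your final two paragraphs (the interpolation to $L^p$ and the passage to Fourier tile-type $q$) are fine as stated.
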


\begin{proof}
Let $f\in L^\infty(\R;X)$ and $g\in L^\infty(\R;X^*)$ satisfy $\abs{f}\leq 1_F$, $\abs{g}\leq 1_E$. Then
\begin{equation*}
\begin{split}
  \sum_{P\in\mathbb{P}}
  &\abs{\pair{f}{\phi_P}\pair{\phi_P}{ g 1_{\{N(\cdot)\in \omega_u\}}}} \\
  &\leq \sum_n\sum_j \sum_{P\in\mathbb{T}_{n,j}}
  \abs{\pair{f}{\phi_P}\pair{\phi_P}{ g 1_{\{N(\cdot)\in \omega_u\}}}} \\
  &\lesssim \sum_n\sum_j 
  \operatorname{density}(\mathbb{T}_{n,j})\operatorname{energy}(\mathbb{T}_{n,j})\abs{I_{\mathbb{T}_{n,j}}} \\
  &\lesssim \sum_n\min\{1,\abs{E}2^{-n}\}\min\{1,\abs{F}^{\alpha/q}2^{-n\alpha/q}\}2^n
\end{split}
\end{equation*}

\subsubsection*{Case $\abs{E}\leq\abs{F}$:} Then
\begin{equation*}
\begin{split}
  \sum_n &\min\{1,\abs{E}2^{-n}\}\min\{1,\abs{F}^{\alpha/q}2^{-n\alpha/q}\}2^n \\
  &\leq\sum_{2^n\leq\abs{E}}2^n+\sum_{\abs{E}\leq 2^n\leq\abs{F}}\abs{E}+\sum_{2^n\geq\abs{F}}\abs{E}\abs{F}^{\alpha/q}2^{-n\alpha/q} \\
  &\lesssim \abs{E}\Big(1+\log\frac{\abs{F}}{\abs{E}}\Big)\lesssim\abs{F}^{1/p}\abs{E}^{1/p'}
\end{split}
\end{equation*}
for any $p\in(1,\infty)$.

\subsubsection*{Case $\abs{E}>\abs{F}$:} Then
\begin{equation*}
\begin{split}
  \sum_n &\min\{1,\abs{E}2^{-n}\}\min\{1,\abs{F}^{\alpha/q}2^{-n\alpha/q}\}2^n \\
  &\leq\sum_{2^n\leq\abs{F}}2^n+\sum_{\abs{F}\leq 2^n\leq\abs{E}}\abs{F}^{\alpha/q}2^{n(1-\alpha/q)}
       +\sum_{2^n\geq\abs{E}}\abs{E}\abs{F}^{\alpha/q}2^{-n\alpha/q} \\
  &\lesssim \abs{F}^{\alpha/q}\abs{E}^{1-\alpha/q}
     \lesssim \abs{F}^{1/p}\abs{E}^{1/p'}
\end{split}
\end{equation*}
if and only if $p\geq q/\alpha$. Thus
\begin{equation*}
   \sum_{P\in\mathbb{P}}
  \abs{\pair{f}{\phi_P}\pair{\phi_P}{ g 1_{\{N(\cdot)\in \omega_u\}}}}
  \lesssim\Norm{f}{L^{p,1}(\R;X)}\Norm{g}{L^{p',1}(\R;X^*)}
\end{equation*}
for all $p\in[q/\alpha,\infty)$.
\end{proof}

\section{Improved energy estimates}

In order to extend the proof of Carleson's theorem to all $p>1$, we need an improvement over the universal energy  bound $\operatorname{energy}(\mathbb{P})\lesssim 1$ for $\abs{f}\leq 1_F$. This asks for a development of vector-valued analogues of some results on wavelets and square-functions appearing in \cite[Chapter 2]{Thiele:book}. We start with a boundedness result for a certain mixture of continuous and dyadic Calder\'on--Zygmund operators.

\begin{lemma}\label{lem:mixedOperator}
For every $I\in\mathscr{D}$, let $\psi_I$ be a smooth $L^2$-bump on $I$, i.e.,
\begin{equation*}
  \abs{\psi_I(x)}+ \abs{I}\cdot\abs{\psi_I'(x)}\lesssim\frac{1}{\abs{I}^{1/2}}\Big(1+\frac{\abs{x-c(I)}}{\abs{I}}\Big)^{-20},
\end{equation*}
with vanishing integral. If $X$ is a UMD space and $\mathscr{I}$ is any finite collection of dyadic intervals, then
\begin{equation*}
  f\mapsto \sum_{I\in\mathscr{I}}\pair{f}{\psi_I}h_I
\end{equation*}
is bounded on $L^p(\R;X)$ for $p\in(1,\infty)$ and from $L^1(\R;X)$ to $L^{1,\infty}(\R;X)$, uniformly over the choice of $\mathscr{I}$.
\end{lemma}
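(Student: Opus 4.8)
The plan is to realise the operator $T\colon f\mapsto\sum_{I\in\mathscr I}\pair{f}{\psi_I}h_I$ as an absolutely convergent series of dyadic shift operators, each of which is bounded on $L^p(\R;X)$ and from $L^1(\R;X)$ to $L^{1,\infty}(\R;X)$ precisely because $X$ is UMD, with operator norms growing only polynomially in the ``complexity'' of the shift; the rapid decay of the structure constants will then beat this polynomial growth.

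The first step is to expand each bump in the Haar basis. Since $\{h_J\}$ is an orthonormal basis of $L^2(\R)$ and $\psi_I\in L^2(\R)$, one has $\psi_I=\sum_J\pair{\psi_I}{h_J}h_J$, hence, using the finiteness of $\mathscr I$ and $f\in L^p\cap L^\infty$ to justify the rearrangement, $Tf=\sum_{I\in\mathscr I}\sum_J a_{I,J}\pair{f}{h_J}h_I$ with $a_{I,J}:=\pair{\psi_I}{h_J}$. The next step is to estimate these structure constants. Because $\psi_I$ is a smooth, mean-zero, $L^2$-normalised bump with rapid spatial decay and $h_J$ has mean zero, one obtains, writing $|J|=2^{-m}|I|$, a bound of the shape $|a_{I,J}|\lesssim_N 2^{-m}\bigl(|J|/|I|\bigr)^{1/2}\bigl(1+\dist(c(J),I)/|I|\bigr)^{-N}$ when $m\ge 0$, the extra factor $2^{-m}$ coming from the smoothness of $\psi_I$ played against the cancellation of $h_J$; when $m<0$ the mean-zero property of $\psi_I$ forces $a_{I,J}$ to be negligible unless one of the finitely many jumps of $h_J$ lies within $O(|I|)$ of $I$, which again yields a gain geometric in the scale gap and of arbitrary polynomial order in the spatial separation. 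Thus in every regime the $L^2$-normalised coefficient $a_{I,J}(|J|/|I|)^{-1/2}$ is $\lesssim 2^{-\delta(|m|+\mathrm{sep})}$ for some $\delta>0$.

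I would then sort the pairs $(I,J)$ in the double sum by the integer $m=\log_2(|I|/|J|)$ and by the relative dyadic position of $J$ inside the relevant ancestor of $I$; call the resulting parameter $\tau$. For fixed $\tau$ the partial sum $T_\tau f=\sum_{(I,J)\in\tau}a_{I,J}\pair{f}{h_J}h_I$ is a generalised Haar shift operator of complexity $|\tau|$ whose $L^2$-normalised coefficients are $\lesssim 2^{-\delta|\tau|}$. By the martingale approach to singular integrals (Figiel), every Haar shift operator of complexity $c$ is bounded on $L^p(\R;X)$ for $1<p<\infty$ and from $L^1(\R;X)$ to $L^{1,\infty}(\R;X)$ with norm $\lesssim 1+c$, for any UMD space $X$; this is the one place where UMD enters. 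Hence $\|T_\tau\|_{L^p\to L^p}+\|T_\tau\|_{L^1\to L^{1,\infty}}\lesssim(1+|\tau|)2^{-\delta|\tau|}$, and summing over $\tau$ gives the asserted bounds with constants depending only on $p$, $X$ and the implicit constant in the bump estimate for $\{\psi_I\}$, in particular independent of $\mathscr I$.

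The main obstacle is the second and third steps: carrying out the Figiel-type decomposition cleanly, and in particular handling the fact that the $\psi_I$ are only rapidly decaying rather than compactly supported, so that the ``spatial position'' index in $\tau$ must be set up (for instance via a dyadic annular decomposition of the mass of each $\psi_I$ around $I$) in such a way that the sum over positions converges and each piece is an honest Haar shift of controlled complexity. If one prefers to minimise the use of the shift machinery, an alternative is to prove only the $L^2(\R;X)$ bound this way---there only the scale-gap decomposition $T=\sum_{m\ge0}T^{(m)}$ into complexity-$m$ shifts with coefficients $O(2^{-m})$ is needed---and then deduce the rest by Calder\'on--Zygmund theory: the kernel $K(x,y)=\sum_I h_I(x)\psi_I(y)$ is a standard kernel that is moreover smooth in $y$, which gives $T:L^1(\R;X)\to L^{1,\infty}(\R;X)$ directly, while for $T^*f=\sum_I\pair{f}{h_I}\psi_I$ the identity $\pair{h_I}{b}=0$ whenever $I\not\subseteq Q$ and $b$ is mean-zero on a dyadic cube $Q$ localises $T^*b$ near $Q$ and yields $T^*:L^1(\R;X^*)\to L^{1,\infty}(\R;X^*)$, hence $T:L^p(\R;X)\to L^p(\R;X)$ for $p\ge2$ by duality and for $1<p<2$ by interpolation with $L^2$.
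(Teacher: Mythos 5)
Your argument is correct, but it proceeds in a genuinely different way from the paper's proof. The paper \emph{upgrades} the Haar functions $h_I$ to smooth wavelets: it introduces regular orthonormal wavelets $\tilde\psi_I$, uses Haar unconditionality (this is where UMD enters), the contraction principle, and Stein's martingale inequality to reduce the claim to boundedness of $f\mapsto\sum_I\eps_I\pair{f}{\psi_I}\tilde\psi_I$, and then handles the latter in one stroke via Figiel's $T(1)$ theorem, since it is a single uniformly bounded Calder\'on--Zygmund operator which is bounded on scalar $L^2$ by orthonormality of $\{\tilde\psi_I\}$. You instead \emph{downgrade} the smooth bumps $\psi_I$ to Haar, expand $\psi_I=\sum_J a_{I,J}h_J$, and realise $T$ as an absolutely convergent series of dyadic Haar shift operators, summed against the geometric decay of the structure constants $a_{I,J}$ in the scale gap and the spatial separation; UMD enters through Figiel's polynomial-in-complexity bound on each individual shift. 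Both routes ultimately rest on Figiel's results. The paper's argument is shorter and avoids the bookkeeping of the shift decomposition, in particular the delicate positional indexing that you rightly flag as the main obstacle (because the bumps $\psi_I$ have only rapid decay rather than compact support, the common dyadic ancestor of $I$ and $J$ may be much larger than both, and the complexity parameter must absorb this). Your decomposition, once carried out, gives a more explicit structural statement---a dyadic representation of $T$---which is a useful object in its own right; it is also more robust in that it delivers the weak-$(1,1)$ bound for each shift directly rather than recovering it a posteriori from kernel regularity, although for the present lemma the latter route (which you sketch as an alternative, and which coincides with the paper's handling of the $L^1\to L^{1,\infty}$ endpoint) is clearly simpler. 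One small caveat: the uniform-in-complexity estimate you quote ($\lesssim 1+c$) is not quite what Figiel's theorem gives---the known bounds are polynomial of somewhat higher degree---but as you note, any polynomial growth is dominated by the geometric decay of $|a_{I,J}|$, so the argument is unaffected.
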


\begin{proof}
Let $\tilde\psi_I$, $I\in\mathscr{D}$, be regular orthonormal wavelets, i.e., just like the $\psi_I$, and in addition pairwise orthogonal. The may be taken to be the translations and dilations of a single function $\tilde\psi_{[0,1)}$, which is not identically vanishing on $[0,1)$. Then it follows that $\fint_I\abs{\tilde\psi_I(x)}\ud x\gtrsim\abs{I}^{-1/2}$. By the unconditionality of the Haar functions, the contraction principle and Stein's inequality,
\begin{equation*}
\begin{split}
  &\BNorm{\sum_{I\in\mathscr{I}}\pair{f}{\psi_I}h_I}{L^p(\R;X)}
  \lesssim\BNorm{\sum_{I\in\mathscr{I}}\eps_I\pair{f}{\psi_I}h_I}{L^p(\R\times\Omega;X)} \\
  &\qquad\lesssim\BNorm{\sum_{I\in\mathscr{I}}\eps_I\pair{f}{\psi_I}\frac{1_I}{\abs{I}^{1/2}}}{L^p(\R\times\Omega;X)} 
  \lesssim\BNorm{\sum_{I\in\mathscr{I}}\eps_I\pair{f}{\psi_I}\Exp_I\abs{\tilde\psi_I}}{L^p(\R\times\Omega;X)} \\
  &\qquad\lesssim\BNorm{\sum_{I\in\mathscr{I}}\eps_I\pair{f}{\psi_I}\tilde\psi_I}{L^p(\R\times\Omega;X)}.
\end{split}
\end{equation*}
The operator $f\mapsto \sum_{I\in\mathscr{I}}\eps_I\pair{f}{\psi_I}\tilde\psi_I$ has a standard Calder\'on--Zygmund kernel, uniformly in $\mathscr{I}$ and the choice of the signs $\eps_I$. It is also bounded on the scalar $L^2(\R)$ space, since
\begin{equation*}
  \BNorm{\sum_{I\in\mathscr{I}}\eps_I\pair{f}{\psi_I}\tilde\psi_I}{2}^2
  =\sum_{I\in\mathscr{I}}\abs{\pair{f}{\psi_I}}^2\lesssim\Norm{f}{2}^2,
\end{equation*}
where the first step follows from the orthonormality of the wavelets $\tilde\psi_I$, and the second is \cite[Proposition 2.3.1]{Thiele:book}.
Hence, by Figiel's $T(1)$ theorem in UMD spaces \cite{Figiel}, it is bounded on all $L^p(\R;X)$. By the above chain of estimates, so is the original operator.

For the boundedness from $L^1(\R;X)$ to $L^{1,\infty}(\R;X)$, it is enough to observe that the original operator has kernel $\sum_{I\in\mathscr{I}}h_I(x)\psi_I(y)$, which satisfies the standard Calder\'on--Zygmund size estimate as well as the standard regularity estimate in the $y$-variable. This is enough to deduce the the weak $L^1$ estimate from the $L^p$ boundedness already established.
\end{proof}

\begin{lemma}
For every $I\in\mathscr{D}$, let $\psi_I$ be a smooth $L^2$-bump on $I$ with vanishing integral. If $X$ is a UMD space and $\mathscr{I}$ is any finite collection of dyadic intervals, then
\begin{equation*}
  \BNorm{\sum_{I\in\mathscr{I}}\pair{f}{\psi_I}\psi_I}{L^p(\R;X)}
  \lesssim\BNorm{\sum_{I\in\mathscr{I}}\pair{f}{\psi_I}h_I}{L^p(\R;X)}
\end{equation*}
for $p\in(1,\infty)$, uniformly over the choice of $\mathscr{I}$.
\end{lemma}

\begin{proof}
We argue by duality. Namely, for some $g\in L^{p'}(\R;X^*)$ of norm one,
\begin{equation*}
\begin{split}
  \BNorm{\sum_{I\in\mathscr{I}}\pair{f}{\psi_I}\psi_I}{L^p(\R;X)}
  &\lesssim\sum_{I\in\mathscr{I}}\pair{f}{\psi_I}\pair{\psi_I}{g} \\
  &=\Bpair{\sum_{I\in\mathscr{I}}\pair{f}{\psi_I}h_I}{\sum_{I'\in\mathscr{I}}\pair{g}{\psi_{I'}}h_{I'}} \\
  &\leq\BNorm{\sum_{I\in\mathscr{I}}\pair{f}{\psi_I}h_I}{L^p(\R;X)}\BNorm{\sum_{I'\in\mathscr{I}}\pair{g}{\psi_{I'}}h_{I'}}{L^{p'}(\R;X^*)},
\end{split}
\end{equation*}
and the second factor is bounded by $\Norm{g}{L^{p'}(\R;X^*)}\leq 1$ by the previous lemma.
\end{proof}

The following result is an improved square function estimate, a vector-valued extension of \cite[Proposition 2.4.1]{Thiele:book}:

\begin{proposition}
Let $X$ be a UMD space and $f\in L^1_{\loc}(\R)$.
Let $\psi_I$ be smooth $L^2$-bumps with vanishing integral, and $\mathscr{I}\subseteq\{I\in\mathscr{D}:\inf_I Mf\leq\lambda\}$ be a finite collection of dyadic intervals. Then for all $p\in(1,\infty)$,
\begin{equation*}
  \BNorm{\sum_{\substack{I\in\mathscr{I}\\ I\subseteq K}}\pair{f}{\psi_I}h_I}{L^p(\R;X)}\lesssim \lambda\abs{K}^{1/p}.
\end{equation*}
\end{proposition}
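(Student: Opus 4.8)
The plan is to reduce the estimate to the previous lemma, which bounds $\sum_{I\in\mathscr{I}}\pair{f}{\psi_I}h_I$ in $L^p(\R;X)$ in terms of a Calder\'on--Zygmund operator, and then to exploit the localization hypothesis $\inf_I Mf\le\lambda$ together with the restriction $I\subseteq K$. First I would observe that by the scalar case \cite[Proposition 2.4.1]{Thiele:book}, the operator $f\mapsto\sum_{I\in\mathscr{I},\,I\subseteq K}\pair{f}{\psi_I}h_I$ has a scalar $L^2(\R)$-bound, localized in the sense that when $\inf_I Mf\le\lambda$ for all $I$ in the collection, one gets $\lesssim\lambda|K|^{1/2}$; the content of the proposition is to upgrade this to $L^p(\R;X)$ for a UMD space $X$. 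The most efficient route is to note that, by Lemma~\ref{lem:mixedOperator} (applied with the finite collection $\{I\in\mathscr{I}:I\subseteq K\}$), the map $g\mapsto\sum_{I\in\mathscr{I},\,I\subseteq K}\pair{g}{\psi_I}h_I$ is bounded on $L^p(\R;X)$ and from $L^1(\R;X)$ to $L^{1,\infty}(\R;X)$, uniformly over the collection. We do not want to apply this to $f$ itself, since $f$ is only in $L^1_{\loc}$; instead we want to use the maximal-function hypothesis to replace $f$ by a truncation.

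The key step is a standard Calder\'on--Zygmund localization: by the hypothesis, every $I\in\mathscr{I}$ with $I\subseteq K$ satisfies $\fint_I M f\le\inf_I Mf\le\lambda$, hence in particular $\inf_I Mf\le\lambda$, so none of these intervals is contained in the set $\Omega:=\{Mf>\lambda\}$ unless... more precisely, since $\inf_I Mf\le\lambda$ we have $I\not\subseteq\{Mf>\lambda\}$, i.e. $I$ meets $\Omega^c$. Now perform a Calder\'on--Zygmund decomposition of $f$ at height $C\lambda$ relative to $K$: write $f 1_{3K} = g_{\mathrm{good}} + \sum_Q b_Q$ where the $b_Q$ are supported on the maximal dyadic cubes $Q\subseteq K$ with $\fint_Q|f|>C\lambda$, have mean zero, $\|b_Q\|_1\lesssim\lambda|Q|$, and $\|g_{\mathrm{good}}\|_\infty\lesssim\lambda$, $\|g_{\mathrm{good}}\|_p\lesssim\lambda|K|^{1/p}$. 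Since $\psi_I$ has vanishing integral and, for $I\subseteq K$, the pairing $\pair{f}{\psi_I}$ only sees $f$ on a neighborhood of $K$, we may as well assume $f$ is supported on, say, $3K$; the tails are handled by the rapid decay of $\psi_I$. The good part is then controlled directly by Lemma~\ref{lem:mixedOperator}:
\begin{equation*}
  \BNorm{\sum_{\substack{I\in\mathscr{I}\\ I\subseteq K}}\pair{g_{\mathrm{good}}}{\psi_I}h_I}{L^p(\R;X)}\lesssim\Norm{g_{\mathrm{good}}}{L^p(\R;X)}\lesssim\lambda|K|^{1/p}.
\end{equation*}
For the bad part, one uses that whenever $I\subseteq K$ is one of the selected intervals and $Q$ is a bad cube with $Q\subsetneq I$, the selection stopping rule together with $\inf_I Mf\le\lambda$ forces $I\not\subseteq\Omega$, while $b_Q$ is mean-zero on $Q$; the cancellation of $\psi_I$ against $b_Q$ on scales $|Q|\lesssim|I|$, combined with the smoothness of $\psi_I$, gives a geometric gain in $|Q|/|I|$, so that $\sum_I|\pair{b_Q}{\psi_I}|\,\|h_I\|$ is summable, and $\sum_Q$ over the disjoint $Q$'s with $\sum_Q|Q|\lesssim\lambda^{-1}\|f1_{3K}\|_1$... — but $f$ is only locally integrable, so one must instead argue that the relevant bad cubes all have $Q\subseteq K$ and $\sum|Q|\lesssim|K|$ by the $\inf_K Mf\le\lambda$ type bound. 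Concretely, one shows $\sum_{\substack{I\in\mathscr{I},\,I\subseteq K}}\pair{\sum_Q b_Q}{\psi_I}h_I$ has $L^p(\R;X)$-norm $\lesssim\lambda|K|^{1/p}$ either by the $L^1\to L^{1,\infty}$ bound of Lemma~\ref{lem:mixedOperator} applied to $\sum_Q b_Q$ (whose $L^1$-norm is $\lesssim\lambda|K|$) plus an $L^\infty$-type bound away from $\bigcup 2Q$, interpolated, or directly via the Calder\'on--Zygmund kernel estimate on $\sum_I h_I(x)\psi_I(y)$ in the $y$-variable.

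The main obstacle I anticipate is making the truncation rigorous given that $f\in L^1_{\loc}$ only: one must argue that $\pair{f}{\psi_I}$ for $I\subseteq K$ depends only on $f$ near $K$ up to a negligible tail, so that the Calder\'on--Zygmund decomposition of $f1_{3K}$ genuinely captures the sum, and that the selected intervals $I$ together with the bad cubes $Q$ all live at scale $\lesssim|K|$ so that the total measure of the exceptional set is $\lesssim|K|$ — this is exactly where $\inf_I Mf\le\lambda$ and $I\subseteq K$ are both used. A clean way to organize everything: since we only need an $L^p(\R;X)$-bound and the operator $T_K:g\mapsto\sum_{I\in\mathscr{I},\,I\subseteq K}\pair{g}{\psi_I}h_I$ is, by Lemma~\ref{lem:mixedOperator}, a Calder\'on--Zygmund operator bounded on $L^p(\R;X)$ and $L^1(\R;X)\to L^{1,\infty}(\R;X)$ uniformly, it suffices to prove the \emph{scalar} good-$\lambda$ / localization estimate $\|T_K f\|_{L^p(\R)}\lesssim\lambda|K|^{1/p}$ for scalar $f$ with $\inf_I Mf\le\lambda$, and then transfer: the scalar estimate is \cite[Proposition 2.4.1]{Thiele:book} for $p=2$, and a standard Calder\'on--Zygmund good part / bad part argument promotes it to all $p\in(1,\infty)$; the vector-valued version then follows because $T_K$, being a CZO bounded on $L^2(\R;X)$ with the same kernel, admits the identical localization proof, word for word, the only vector-valued inputs being the $L^p$ and weak-$L^1$ bounds of Lemma~\ref{lem:mixedOperator}. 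I would present the argument in this last form, emphasizing that the proof of \cite[Proposition 2.4.1]{Thiele:book} uses only the size and regularity of the kernel and an $L^2$ bound, all of which are available here in the $X$-valued setting.
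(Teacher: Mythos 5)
Your proposal correctly identifies that Lemma~\ref{lem:mixedOperator} is the engine, and the Calder\'on--Zygmund style decomposition is in the spirit of the argument, but it has a genuine gap in the passage from the bad part to the claimed $L^p$ bound. After you write $f1_{3K}=g_{\mathrm{good}}+\sum_Q b_Q$, the good part indeed yields $\|\cdot\|_{L^p(\R;X)}\lesssim\lambda|K|^{1/p}$ via the $L^p$ boundedness in Lemma~\ref{lem:mixedOperator}. But for the bad part the only available estimates are (i) the $L^1\to L^{1,\infty}$ bound applied to $\sum_Q b_Q$, which produces a weak-$L^1$ control $\lesssim\lambda|K|$, and (ii) pointwise kernel tail bounds off $\bigcup 2Q$. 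Neither of these is an $L^p$ estimate, and they cannot be ``interpolated'' into one: the $b_Q$'s have no useful $L^q$ bound for any $q>1$ (they contain the unbounded part of $f$), so there is no second endpoint to interpolate against, and summing the off-diagonal $L^p$ tail contributions over $Q$ gives $\sum_Q\lambda|Q|^{1/p}$, which is $\lambda|K|^{1/p}$ only when $p=1$. The standard Calder\'on--Zygmund argument you invoke establishes \emph{operator} bounds $L^p\to L^p$ by interpolating weak type against a strong type at another exponent; here the target is a single localization estimate for a fixed $f$, and that mechanism simply does not apply. The same objection defeats your ``clean'' reduction to the scalar case: the estimate $\|T_K f\|_{L^p}\lesssim\lambda|K|^{1/p}$ is not an operator-norm bound for $T_K$, so there is no Figiel- or CZ-type transference principle that automatically upgrades its scalar form to the $X$-valued one.

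The paper closes exactly this gap by routing through BMO. It observes the crucial algebraic identity
\begin{equation*}
  1_K\Big(\tilde f-\ave{\tilde f}_K\Big)=\sum_{\substack{I\in\mathscr{I}\\ I\subseteq K}}\pair{f}{\psi_I}h_I,
  \qquad \tilde f:=\sum_{I\in\mathscr{I}}\pair{f}{\psi_I}h_I,
\end{equation*}
so that the left-hand side of the claim is precisely the $L^p$ oscillation of $\tilde f$ over $K$. Then it splits $\sum_{I\subseteq K}$ along the \emph{maximal} $J\in\mathscr{I}$ with $J\subseteq K$ (these disjoint blocks play the role of your bad cubes, but the maximal-function hypothesis is applied directly to each $J\in\mathscr{I}$, so no separate Calder\'on--Zygmund stopping time is needed). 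For each block it uses the $L^1\to L^{1,\infty}$ bound of Lemma~\ref{lem:mixedOperator} on $1_{2J}f$ (whose $L^1$-norm is $\lesssim\lambda|J|$ because $\inf_J Mf\leq\lambda$), plus the rapid decay of $\psi_I$ for the $1_{(2J)^c}f$ tail, obtaining
\begin{equation*}
  \Norm{1_K\big(\tilde f-\ave{\tilde f}_K\big)}{L^{1,\infty}(\R;X)}\lesssim\lambda|K|\qquad\text{for every }K.
\end{equation*}
This is a weak-$L^1$ oscillation bound uniformly in $K$; by the John--Str\"omberg characterization it gives $\Norm{\tilde f}{\BMO(\R;X)}\lesssim\lambda$, and then the John--Nirenberg inequality upgrades the oscillation control from weak $L^1$ back up to every $L^p$, yielding $\Norm{1_K(\tilde f-\ave{\tilde f}_K)}{L^p(\R;X)}\lesssim\lambda|K|^{1/p}$. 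The John--Str\"omberg / John--Nirenberg step is exactly the missing idea in your write-up; without it, a weak-$L^1$ estimate for the bad part cannot be turned into the claimed $L^p$ bound.
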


\begin{proof}
Note that $\inf_I Mf\leq\lambda$ implies that $\pair{f}{\psi_I}$ is well-defined. We denote
\begin{equation*}
  \tilde{f}:=\sum_{I\in\mathscr{I}}\pair{f}{\psi_I}h_I.
\end{equation*}
Then, denoting by $\mathscr{I}^*(K)$ the maximal elements $I\in\mathscr{I}$ with $I\subseteq K$,
\begin{equation*}
 1_K(\tilde{f}-\ave{\tilde{f}}_K)= \sum_{\substack{I\in\mathscr{I}\\I\subseteq K}}\pair{f}{\psi_I}h_I
  = \sum_{J\in\mathscr{I}^*(K)}\sum_{\substack{I\in\mathscr{I}\\ I\subseteq J} }\pair{f}{\psi_I}h_I.
\end{equation*}

For each $J\in\mathscr{I}^*(K)$, we apply Lemma~\ref{lem:mixedOperator} to the operator
\begin{equation*}
   f\mapsto \sum_{\substack{I\in\mathscr{I}\\ I\subseteq J} }\pair{f}{\psi_I}h_I
\end{equation*}
to deduce its boundedness from $L^1(\R;X)$ into $L^{1,\infty}(\R;X)$. Hence
\begin{equation}\label{eq:JNin}
   \BNorm{\sum_{\substack{I\in\mathscr{I}\\ I\subseteq J} }\pair{1_{2J}f}{\psi_I}h_I}{L^{1,\infty}(\R;X)}
   \lesssim\Norm{1_{2J}f}{L^1(\R;X)}
   \lesssim\abs{J}\inf_J Mf\leq\lambda\abs{J}.
\end{equation}
On the other hand, for $I\subseteq J$ we have
\begin{equation*}
\begin{split}
  \abs{\pair{1_{(2J)^c}f}{\psi_I}}
  &\lesssim\int_{(2J)^c}\frac{\abs{f(x)}}{\abs{I}^{1/2}}\Big(1+\frac{\abs{x-c(I)}}{\abs{I}}\Big)^{-20}\ud x \\
  &\lesssim\abs{I}^{1/2}\Big(\frac{\abs{I}}{\abs{J}}\Big)^{10}\int_{\R}\frac{\abs{f(x)}}{\abs{I}}\Big(1+\frac{\abs{x-c(I)}}{\abs{I}}\Big)^{-10}\ud x \\
  &\lesssim\abs{I}^{1/2}\Big(\frac{\abs{I}}{\abs{J}}\Big)^{9}\int_{\R}\frac{\abs{f(x)}}{\abs{J}}\Big(1+\frac{\abs{x-c(J)}}{\abs{J}}\Big)^{-10}\ud x \\
  &\lesssim\abs{I}^{1/2}\Big(\frac{\abs{I}}{\abs{J}}\Big)^{9}\inf_J Mf \leq\abs{I}^{1/2}\Big(\frac{\abs{I}}{\abs{J}}\Big)^{9}\lambda,
\end{split}
\end{equation*}
hence
\begin{equation*}
\begin{split}
  \Babs{\sum_{\substack{I\in\mathscr{I}\\ I\subseteq J} }\pair{f}{\psi_I}h_I}
  &\lesssim\sum_{\substack{I\in\mathscr{D}\\ I\subseteq J} }\Big(\frac{\abs{I}}{\abs{J}}\Big)^{9}\lambda\cdot 1_I \\
  &=\sum_{k=0}^\infty \sum_{\substack{I\in\mathscr{D}, I\subseteq J \\ \abs{I}=2^{-k}\abs{J} }}2^{-9k}\lambda\cdot 1_I 
  \lesssim \lambda\cdot 1_J,
\end{split}
\end{equation*}
and thus also
\begin{equation}\label{eq:JNout}
   \BNorm{\sum_{\substack{I\in\mathscr{I}\\ I\subseteq J} }\pair{1_{(2J)^c}f}{\psi_I}h_I}{L^{1,\infty}(\R;X)}
   \lesssim \lambda\abs{J}.
\end{equation}
A combination of \eqref{eq:JNin} and \eqref{eq:JNout} shows that
\begin{equation*}
  \BNorm{\sum_{\substack{I\in\mathscr{I}\\ I\subseteq J} }\pair{f}{\psi_I}h_I}{L^{1,\infty}(\R;X)}
  \lesssim \lambda\abs{J}.
\end{equation*}
Since the terms corresponding to different $J$ are disjointly supported, we get the first estimate in
\begin{equation*}
\begin{split}
  \Norm{1_K(\tilde{f}-\ave{\tilde{f}}_K)}{L^{1,\infty}(\R;X)}
  &\leq\sum_{J\in\mathscr{I}^*(K)}\BNorm{\sum_{\substack{I\in\mathscr{I}\\ I\subseteq J} }\pair{f}{\psi_I}h_I}{L^{1,\infty}(\R;X)} \\
  &\lesssim \sum_{J\in\mathscr{I}^*(K)}\lambda\abs{J}
  \leq\lambda\abs{K}.
\end{split}
\end{equation*}
By the John--Str\"omberg inequality, we have $\Norm{\tilde{f}}{\BMO(\R;X)}\lesssim\lambda$, and then by the John--Nirenberg inequality that
\begin{equation*}
  \Norm{1_K(\tilde{f}-\ave{\tilde{f}}_K)}{L^p(\R;X)}\lesssim\lambda\abs{K}^{1/p}.\qedhere
\end{equation*}
\end{proof}

Now finally we are ready for the improved energy estimate:

\begin{corollary}\label{cor:impEnergy}
Let $\mathbb{P}$ be a finite collection of tiles such that $\inf_{I_P}Mf\leq\lambda$ for all $P\in\mathbb{P}$. Then
\begin{equation*}
  \operatorname{energy}(\mathbb{P})\lesssim\lambda.
\end{equation*}
\end{corollary}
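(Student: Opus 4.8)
The plan is to fix a tree $\mathbb{T}\subseteq\mathbb{P}$ and bound $\Delta(\mathbb{T})\lesssim\lambda$ with an absolute implied constant; since $\operatorname{energy}(\mathbb{P})=\sup_{\mathbb{T}\subseteq\mathbb{P}}\Delta(\mathbb{T})$, this will suffice. The quantity $\Delta(\mathbb{T})$ involves only the up-tree part $\mathbb{T}_u$, and the key point is that a single modulation turns $\sum_{P\in\mathbb{T}_u}\pair{f}{\phi_P}\phi_P$ into a sum of exactly the type handled by the two preceding results. I would write $\xi_0:=c(\omega_{\mathbb{T}_u})$, the common frequency of the up-tree, which satisfies $\xi_0\in\omega_{P_u}$ for every $P\in\mathbb{T}_u$; set $g:=\operatorname{Mod}_{-\xi_0}f$; and for $P\in\mathbb{T}_u$ put $\psi_{I_P}:=\operatorname{Mod}_{-\xi_0}\phi_P$. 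Because $\mathbb{T}_u$ is an up-tree, $\omega_P$ is the unique dyadic interval of length $1/\abs{I_P}$ containing $\omega_{\mathbb{T}_u}$, so $P\mapsto I_P$ is injective on $\mathbb{T}_u$ and $\phi_P$, hence $\psi_{I_P}$, depends only on $I_P$; thus $\mathscr{I}:=\{I_P:P\in\mathbb{T}_u\}$ is a finite collection of dyadic intervals, all contained in $I_{\mathbb{T}}$, and $\{\psi_I\}_{I\in\mathscr{I}}$ is well-defined.

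Next I would verify that each $\psi_I$ is a smooth $L^2$-bump on $I$ with vanishing integral, in the sense of Lemma~\ref{lem:mixedOperator}. Indeed $\widehat{\phi_P}$ is supported in $c(\omega_{P_d})+[-\tfrac{1}{20}\abs{I_P}^{-1},\tfrac{1}{20}\abs{I_P}^{-1}]$, and since $\xi_0\in\omega_{P_u}$ with $\omega_{P_u}\cap\omega_{P_d}=\varnothing$, an elementary computation with the two halves of $\omega_P$ gives $\tfrac14\abs{I_P}^{-1}\leq\abs{\xi_0-c(\omega_{P_d})}\leq\tfrac34\abs{I_P}^{-1}$. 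Hence $\widehat{\psi_{I_P}}$ is supported in an interval of length $\sim\abs{I_P}^{-1}$ that stays a definite distance from the origin and within distance $\abs{I_P}^{-1}$ of it; in particular $\widehat{\psi_{I_P}}(0)=\widehat{\phi_P}(\xi_0)=0$, i.e. $\int\psi_{I_P}=0$, and, because the modulation parameter $c(\omega_{P_d})-\xi_0$ is $O(\abs{I_P}^{-1})$ and $\varphi$ is Schwartz, the required pointwise size and smoothness bounds on $\psi_{I_P}$ and $\abs{I_P}\psi_{I_P}'$ follow from those of $\varphi$ and $\varphi'$.

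Granting this, modulation invariance of the $L^q(\R;X)$ norm together with $\pair{f}{\phi_P}=\pair{g}{\psi_{I_P}}$ would give
\[
  \Delta(\mathbb{T})
  =\frac{1}{\abs{I_{\mathbb{T}}}^{1/q}}\BNorm{\operatorname{Mod}_{-\xi_0}\sum_{P\in\mathbb{T}_u}\pair{f}{\phi_P}\phi_P}{L^q(\R;X)}
  =\frac{1}{\abs{I_{\mathbb{T}}}^{1/q}}\BNorm{\sum_{I\in\mathscr{I}}\pair{g}{\psi_I}\psi_I}{L^q(\R;X)}.
\]
Since $\abs{g}=\abs{f}$ pointwise, $Mg=Mf$, so the hypothesis $\inf_{I_P}Mf\leq\lambda$ yields $\mathscr{I}\subseteq\{I\in\mathscr{D}:\inf_I Mg\leq\lambda\}$. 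I would then apply first the lemma passing from $h_I$-sums to $\psi_I$-sums and then the improved square-function proposition (to the function $g$, with exponent $q$ and with $K:=I_{\mathbb{T}}$, which contains every $I\in\mathscr{I}$), obtaining
\[
  \BNorm{\sum_{I\in\mathscr{I}}\pair{g}{\psi_I}\psi_I}{L^q(\R;X)}
  \lesssim\BNorm{\sum_{I\in\mathscr{I}}\pair{g}{\psi_I}h_I}{L^q(\R;X)}
  \lesssim\lambda\abs{I_{\mathbb{T}}}^{1/q},
\]
whence $\Delta(\mathbb{T})\lesssim\lambda$; taking the supremum over trees $\mathbb{T}\subseteq\mathbb{P}$ completes the proof. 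The only genuine work is the second paragraph: the frequency-support bookkeeping that simultaneously produces the bump estimates and the cancellation needed to bring $\operatorname{Mod}_{-\xi_0}\phi_P$ under the hypotheses of the two preceding results. Everything else is a matter of rewriting and invoking the already-established $L^q$ and weak-$(1,1)$ bounds.
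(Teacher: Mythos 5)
Your argument is correct and is essentially the same as the paper's own proof: modulate by a frequency in $\omega_{\mathbb{T}_u}$ to convert the wave packets $\phi_P$ into smooth $L^2$-bumps $\psi_{I_P}$ with vanishing integral, observe $M(\operatorname{Mod}_{-\xi_0}f)=Mf$, and apply the two preceding lemmas. The only cosmetic differences are that you take $\xi_0=c(\omega_{\mathbb{T}_u})$ while the paper uses $c(\omega_{\mathbb{T}})$ (both lie in $\omega_{P_u}$ for every $P$ in the up-tree, so both work), and that you spell out the frequency-support bookkeeping verifying $\widehat{\psi_{I_P}}(0)=0$ and the bump estimates, which the paper asserts without detail.
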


\begin{proof}
Let $\mathbb{T}\subseteq\mathbb{P}$ be an up-tree. The tiles $P\in\mathbb{T}$ are in one-to-one correspondence with their time intervals $I_P$, and hence
\begin{equation*}
  \psi_{I_P}:=\operatorname{Mod}_{-c(\omega_{\mathbb{T}})}\phi_P,\qquad P\in\mathbb{T},
\end{equation*}
is well defined. These functions are smooth $L^2$-bumps with vanishing integral. Hence, by applying the previous results, we deduce that
\begin{equation*}
\begin{split}
  \BNorm{\sum_{P\in\mathbb{T}}\pair{f}{\phi_P}\phi_P}{L^q(\R;X)}
  &=\BNorm{\sum_{P\in\mathbb{T}}\pair{\operatorname{Mod}_{-c(\omega_{\mathbb{T}})}f}{\psi_{I_P}}\psi_{I_P}}{L^q(\R;X)} \\
  &\lesssim\BNorm{\sum_{P\in\mathbb{T}}\pair{\operatorname{Mod}_{-c(\omega_{\mathbb{T}})}f}{\psi_{I_P}}h_{I_P}}{L^q(\R;X)} 
    \lesssim\lambda\abs{I_{\mathbb{T}}}^{1/q},
\end{split}
\end{equation*}
since $\operatorname{Mod}_{-c(\omega_{\mathbb{T}})}f$ has the same maximal function as $f$.
\end{proof}

\section{Carleson's theorem for general $p>1$}

In this final section, we complete the proof of Theorem~\ref{thm:main} in full generality. Thus we want to prove that
\begin{equation*}
  \abs{\pair{Cf}{g}}\lesssim\Norm{f}{L^p(\R;X)}\Norm{g}{L^{p'}(\R;X)}
\end{equation*}
for all $p\in(1,\infty)$ and all functions making the right side finite. By well-known results concerning interpolation of generalized restricted weak-type inequalities, it suffices to show the following: For all sets $E,F\subseteq\R$ of finite measure, there exists a \emph{major subset} $\tilde{E}\subseteq E$ with $\abs{\tilde{E}}\geq\tfrac12\abs{E}$, such that for all $f\in L^\infty(F;X)$ and all $g\in L^\infty(\tilde{E};X^*)$ of norm one, we have
\begin{equation*}
  \abs{\pair{Cf}{g}}\lesssim\abs{F}^{1/p}\abs{E}^{1/p'}
\end{equation*}
for all $p\in(1,\infty)$. The proof splits into two cases according to the relative size of $E$ and $F$. (In the first case, we can simply take $\tilde{E}=E$.)

\begin{lemma}
Let $\abs{E}\leq\abs{F}$. Then
\begin{equation*}
  \abs{\pair{Cf}{g}}\lesssim \abs{E}\Big(1+\log\frac{\abs{F}}{\abs{E}}\Big)
\end{equation*}
for all $f\in L^\infty(F;X)$ and $g\in L^\infty(E;X^*)$ bounded by one.
\end{lemma}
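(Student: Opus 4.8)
The plan is to repeat verbatim the computation already performed in the case $\abs{E}\le\abs{F}$ of Proposition~\ref{prop:largePCarleson}; the hypothesis is the same and the conclusion obtained there is precisely the bound claimed here, so nothing new is needed. Concretely, fix $f\in L^\infty(F;X)$ and $g\in L^\infty(E;X^*)$ with $\abs{f}\le 1_F$ and $\abs{g}\le 1_E$, reduce by a routine limiting argument to a finite collection of tiles $\mathbb{P}$, and apply the iterated Density/Energy decomposition established above: $\mathbb{P}=\bigcup_n\bigcup_j\mathbb{T}_{n,j}\cup\mathbb{P}_\infty$ with $\operatorname{density}(\mathbb{T}_{n,j})\le\min\{1,\abs{E}2^{-n}\}$, $\operatorname{energy}(\mathbb{T}_{n,j})\le\min\{1,\abs{F}^{\alpha/q}2^{-n\alpha/q}\}$, $\sum_j\abs{I_{\mathbb{T}_{n,j}}}\lesssim 2^n$, and $\operatorname{density}(\mathbb{P}_\infty)=\operatorname{energy}(\mathbb{P}_\infty)=0$, so that $\mathbb{P}_\infty$ contributes nothing.

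Next I would bound $\abs{\pair{Cf}{g}}$ by $\sum_{P\in\mathbb{P}}\abs{\pair{f}{\phi_P}\pair{\phi_P}{g1_{\{N(\cdot)\in\omega_{P_u}\}}}}$, split this sum over the trees $\mathbb{T}_{n,j}$, and apply the Tree lemma to each tree to dominate its contribution by $\operatorname{density}(\mathbb{T}_{n,j})\operatorname{energy}(\mathbb{T}_{n,j})\abs{I_{\mathbb{T}_{n,j}}}$. Summing first over $j$ with $\sum_j\abs{I_{\mathbb{T}_{n,j}}}\lesssim 2^n$ and then over $n$ reduces everything to the scalar series $\sum_n\min\{1,\abs{E}2^{-n}\}\min\{1,\abs{F}^{\alpha/q}2^{-n\alpha/q}\}2^n$.

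The final step is the elementary evaluation of this series. I would split the range of $n$ into $2^n\le\abs{E}$, where the summand is $\lesssim 2^n$ and the partial sum is $\lesssim\abs{E}$; into $\abs{E}\le 2^n\le\abs{F}$, where each summand is $\lesssim\abs{E}$ and there are $\lesssim\log(\abs{F}/\abs{E})$ terms; and into $2^n\ge\abs{F}$, where the summand equals $\abs{E}\abs{F}^{\alpha/q}2^{-n\alpha/q}$, which decays geometrically and sums to $\lesssim\abs{E}$. Adding the three pieces yields $\abs{\pair{Cf}{g}}\lesssim\abs{E}(1+\log(\abs{F}/\abs{E}))$. There is no genuine obstacle here; the only point to respect is that the trivial bounds $\operatorname{density}\lesssim 1$ and $\operatorname{energy}\lesssim 1$ (equivalently the $\min\{1,\cdot\}$ truncations) must be invoked in the small-$n$ range, which is exactly what turns a would-be divergent geometric tail into the additive $\abs{E}$. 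In particular this case does not use the improved energy bound of Corollary~\ref{cor:impEnergy}; that refinement will be needed only in the complementary case $\abs{E}>\abs{F}$, where one additionally passes to a major subset $\tilde{E}$.
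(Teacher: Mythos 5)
Your proof is correct and takes essentially the same route as the paper, which simply cites the $\abs{E}\leq\abs{F}$ computation from the proof of Proposition~\ref{prop:largePCarleson}. You have correctly identified that only the trivial density and energy bounds are needed here, correctly split the dyadic sum into three ranges, and correctly evaluated each piece, so the argument closes as stated.
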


\begin{proof}
This is contained in the proof of Proposition~\ref{prop:largePCarleson}.
\end{proof}

The other case is the more involved one; it is here that we need the improved energy estimate from the previous section.

\begin{lemma}
Let $\abs{E}>\abs{F}$. Then there exists $\tilde{E}\subseteq E$ with $\abs{\tilde{E}}\geq\tfrac12\abs{E}$ such that
\begin{equation*}
  \abs{\pair{Cf}{g}}\lesssim \abs{F}\Big(1+\log\frac{\abs{E}}{\abs{F}}\Big)
\end{equation*}
for all $f\in L^\infty(F;X)$ and $g\in L^\infty(\tilde{E};X^*)$ bounded by one.\end{lemma}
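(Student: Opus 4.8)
The plan is to introduce an exceptional set on the $F$-side, which will at once furnish the major subset $\tilde E$ and, through Corollary~\ref{cor:impEnergy}, an improved universal energy bound; we then re-run the density/energy machinery with this sharper input. Put $\Omega:=\{x:M\mathbf 1_F(x)>C_0\abs F/\abs E\}$, with $M$ the Hardy--Littlewood maximal operator. The weak $(1,1)$ bound gives $\abs\Omega\lesssim\abs E/C_0$, so for a suitable absolute $C_0$ we have $\abs\Omega\le\tfrac12\abs E$, and we set $\tilde E:=E\setminus\Omega$, whence $\abs{\tilde E}\ge\tfrac12\abs E$. Fixing $f$ supported on $F$ and $g$ supported on $\tilde E$, both bounded by one, it suffices as in the earlier reductions to bound $\sum_{P\in\mathbb P}\abs{\pair f{\phi_P}\pair{\phi_P}{g\mathbf 1_{\{N(\cdot)\in\omega_{P_u}\}}}}$ uniformly over finite collections of tiles, and we split $\mathbb P=\mathbb P_{\good}\cup\mathbb P_{\bad}$ according to whether $I_P\not\subseteq\Omega$ or $I_P\subseteq\Omega$.

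For $\mathbb P_{\good}$, each $P$ has a point of $I_P$ outside $\Omega$, so $\inf_{I_P}Mf\le\inf_{I_P}M\mathbf 1_F\le C_0\abs F/\abs E$, and Corollary~\ref{cor:impEnergy} gives $\operatorname{energy}(\mathbb P_{\good})\lesssim\abs F/\abs E$; in particular \emph{every} subcollection has energy $\lesssim\abs F/\abs E$. Applying the Density and Energy lemmas and their iteration to $\mathbb P_{\good}$ with $\tilde E$ in place of $E$ (the lemmas are scale-free and only consume the universal bounds, so they go through verbatim) yields trees $\mathbb T_{n,j}$ with $\operatorname{density}(\mathbb T_{n,j})\le\min\{1,\abs E2^{-n}\}$, $\operatorname{energy}(\mathbb T_{n,j})\le\min\{\abs F/\abs E,\ \abs F^{\alpha/q}2^{-n\alpha/q}\}$ (the first bound being the new ingredient) and $\sum_j\abs{I_{\mathbb T_{n,j}}}\lesssim 2^n$. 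By the Tree lemma, the good part is at most $\sum_n\min\{1,\abs E2^{-n}\}\min\{\abs F/\abs E,\ \abs F^{\alpha/q}2^{-n\alpha/q}\}\,2^n$, and splitting the $n$-sum into the ranges $2^n\le\abs F$, $\abs F<2^n\le\abs E$, and $2^n>\abs E$ (the last further according to which term realizes the energy minimum) collapses this geometric-type sum to $\lesssim\abs F\,(1+\log(\abs E/\abs F))$; the logarithm appears exactly on the range $\abs E\le 2^n\lesssim(\abs E/\abs F)^{q/\alpha}\abs F$, where both minima stay flat at $\abs E2^{-n}$ and $\abs F/\abs E$. This refines the case $\abs E>\abs F$ of the computation already carried out in the proof of Proposition~\ref{prop:largePCarleson}.

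For $\mathbb P_{\bad}$ there is no energy gain, and instead the support condition $\supp g\subseteq\tilde E\subseteq\Omega^c$ does the work. Decompose $\Omega$ into its maximal dyadic subintervals $Q_\nu$; every $P\in\mathbb P_{\bad}$ has $I_P$ inside a unique $Q_{\nu(P)}$, while $g\mathbf 1_{\{N(\cdot)\in\omega_{P_u}\}}$ is supported outside $\Omega\supseteq Q_{\nu(P)}$. The rapid spatial decay of $\phi_P$ off $I_P$ gives $\abs{\pair{\phi_P}{g\mathbf 1_{\{N(\cdot)\in\omega_{P_u}\}}}}\lesssim\abs{I_P}^{1/2}\bigl(\abs{I_P}/\dist(I_P,Q_{\nu(P)}^c)\bigr)^{9}$, while maximality of $Q_\nu$ forces $M\mathbf 1_F\le C_0\abs F/\abs E$ at a point of its dyadic parent, hence $\abs{F\cap 2Q_\nu}\lesssim(\abs F/\abs E)\abs{Q_\nu}$ and therefore, since $\abs f\le\mathbf 1_F$, $\abs{\pair f{\phi_P}}\lesssim\abs{I_P}^{1/2}\min\{1,(\abs F/\abs E)\abs{Q_\nu}/\abs{I_P}\}$ up to a negligible tail contribution from $(2Q_\nu)^c$. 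Multiplying these two bounds, summing over the dyadic generations of $I_P$ inside $Q_\nu$ (a convergent geometric series, thanks to the decay factor) and then over $\nu$ with $\sum_\nu\abs{Q_\nu}=\abs\Omega\le\abs E$, again produces $\lesssim\abs F\,(1+\log(\abs E/\abs F))$. This part is entirely elementary, using no phase-plane machinery. Adding the two contributions finishes the proof.

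The step I expect to be the main obstacle is the $\mathbb P_{\bad}$ bookkeeping: one must make the boundary-layer estimate genuinely uniform over tiles whose time interval abuts $\partial\Omega$ --- where the decay factor is only $O(1)$ --- and count such tiles per dyadic generation carefully, so that the geometric series in the scale of $I_P$ relative to $Q_\nu$ together with the series in the density gain $(\abs F/\abs E)2^k$ recombine to exactly $\abs F\,(1+\log(\abs E/\abs F))$ and not something larger. By contrast, $\mathbb P_{\good}$ is routine once the improved energy bound of Corollary~\ref{cor:impEnergy} has been fed into the existing decomposition.
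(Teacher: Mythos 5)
Your approach is correct in outline but genuinely different from the paper's. The paper builds a \emph{two-step} exceptional set $G=\{M\mathbf 1_F>K|F|/|E|\}$ and $\tilde G=\{M\mathbf 1_G>1/8\}$, sets $\tilde E:=E\setminus\tilde G$, and splits tiles according to whether $2I_P\subseteq\tilde G$. For the ``good'' tiles $2I_P\not\subseteq\tilde G$ one still has $I_P\not\subseteq G$, so the improved energy bound applies exactly as in your $\mathbb P_{\good}$; for the ``bad'' tiles the paper organizes by the dilate $k$ with $2^kI_P\subseteq\tilde G$, $2^{k+1}I_P\not\subseteq\tilde G$, pairing an $O(2^{-10k})$ decay on the $g$ side with an $O(2^k\,|F|/|E|)$ bound on the $f$ side (coming from $\inf_{2^{k+1}I_P}Mf$), so that the bad part sums to $\lesssim|F|$ with no logarithm. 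You instead fatten nothing: you take $\Omega=G$, $\tilde E:=E\setminus\Omega$, define $\mathbb P_{\bad}$ by $I_P\subseteq\Omega$, and treat the bad tiles with a Whitney decomposition $\{Q_\nu\}$ of $\Omega$. This makes your $\mathbb P_{\good}$ step simpler (no indirection through $\tilde G$), but it moves the borderline tiles --- those with $I_P\subseteq\Omega$ abutting $\partial\Omega$ --- into $\mathbb P_{\bad}$, where you get no spatial decay from $g$; you save them instead with the density bound $|F\cap 3Q_\nu|\lesssim(|F|/|E|)|Q_\nu|$ inherited from the maximality of $Q_\nu$, which yields $|\langle f,\phi_P\rangle|\lesssim|I_P|^{1/2}\min\{1,(|F|/|E|)2^m\}$ at scale $|I_P|=2^{-m}|Q_\nu|$. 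Summing $\sum_m 2^{-m}\min\{1,(|F|/|E|)2^m\}\lesssim(|F|/|E|)(1+\log(|E|/|F|))$ per Whitney interval and then $\sum_\nu|Q_\nu|\lesssim|E|$ indeed gives $|F|(1+\log(|E|/|F|))$, so the boundary-layer concern you flagged does resolve --- but at the cost of a logarithm on the bad part which the paper avoids.

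One imprecision worth fixing: your bound $|\langle\phi_P,g\mathbf 1_{\{N\in\omega_{P_u}\}}\rangle|\lesssim|I_P|^{1/2}\bigl(|I_P|/\dist(I_P,Q_\nu^c)\bigr)^9$ discards the $\omega$-localization. As written it cannot be summed over the (many) tiles sharing the time interval $I_P$; you need to retain a factor like $\int\mathbf 1_{\omega_{P_u}}(N(x))\tilde v_{I_P}(x)\,dx$ (with $\tilde v_{I_P}$ a version of $v_{I_P}$ with slightly less decay, the remaining decay absorbed into the $\dist(I_P,Q_\nu^c)$ factor) and use disjointness of the $\omega_{P_u}$'s over $\omega$ at fixed $I_P$, exactly as the paper does with its $\int\mathbf 1_{\omega_u}(N(x))v_I(x)\,dx$. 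With that repaired, your $\mathbb P_{\bad}$ bookkeeping goes through. Also, you call the density bound ``the new ingredient'' in the $\mathbb P_{\good}$ decomposition; the new ingredient relative to the large-$p$ case is of course the cap $|F|/|E|$ on the energy, not the $\min\{1,|E|2^{-n}\}$ on the density.
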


\begin{proof}
Let $G:=\{M(1_F)>K\abs{F}/\abs{E}\}$ and $\tilde{G}:=\{M(1_G)>\tfrac18\}$. Then $\abs{\tilde{G}}\leq\tfrac12\abs{E}$, and hence $\tilde{E}:=E\setminus\tilde{G}$ satisfies $\abs{\tilde{E}}\geq\tfrac12\abs{E}$. For $f$ and $g$ as in the assertion, we write
\begin{equation*}
  \sum_{P\in\mathbb{P}}\abs{\pair{f}{\phi_P}\pair{\phi_P}{g1_{E_{P_u}}}}
  =\sum_{\substack{P\in\mathbb{P}\\ 2I_P\not\subseteq \tilde{G}}}
  +\sum_{\substack{P\in\mathbb{P}\\ 2I_P\subseteq \tilde{G}}}
  =\sum_{\substack{P\in\mathbb{P}\\ 2I_P\not\subseteq \tilde{G}}}
  + \sum_{k=1}^\infty\sum_{\substack{P\in\mathbb{P}\\ 2^k I_P\subseteq \tilde{G} \\ 2^{k+1}I_P \not\subseteq \tilde{G}}}.
\end{equation*}

\subsubsection*{The part with $2I_P\subseteq \tilde{G}$}
For the $k$th term in this part, we observe the following estimates. First,
\begin{equation*}
\begin{split}
  \abs{\pair{f}{\phi_P}}
  &\lesssim\abs{I_P}^{1/2}\pair{\abs{f}}{v_{I_{P}}}
  \lesssim 2^k\abs{I_P}^{1/2}\pair{\abs{f}}{v_{2^{k+1}I_{P}}} \\
  &\lesssim 2^k\abs{I_P}^{1/2}\inf_{2^{k+1}I_P}Mf 
  \lesssim 2^k\abs{I_P}^{1/2}\frac{\abs{F}}{\abs{E}},
\end{split}
\end{equation*}
where $2^{k+1}I_P\not\subseteq\tilde{G}$ was used in the last step. Second, since
\begin{equation*}
  \supp g\subseteq\tilde{G}^c\subseteq (2^kI_P)^c,
\end{equation*}
we have
\begin{equation*}
\begin{split}
  \abs{\pair{\phi_P}{g1_{E_{P_u}}}}
  &\lesssim\abs{I_P}^{1/2}\int_{(2^k I_P)^c}\abs{g(x)}1_{E_{P_u}}(x)\frac{1}{\abs{I_P}}\Big(1+\frac{\abs{x-c(I_P)}}{\abs{I_P}}\Big)^{-20}\ud x \\
    &\lesssim\abs{I_P}^{1/2} 2^{-10k}\int 1_{\omega_P}(N(x))v_{I_P}(x)\ud x.
\end{split}
\end{equation*}
We also observe that the different intervals $I_P$ in this $k$th sum have overlap of at most $2$ at any point. In fact, it is easy to check that
\begin{equation*}
  I^{(k)}\subset 2^{k+1}I\subset 2^k I^{(2)},\qquad k\geq 1.
\end{equation*}
Thus, if $I$ satisfies $2^{k+1}I\not\subseteq\tilde{G}$, then no dyadic interval $J\subseteq I^{(2)}$ can satisfy $2^k J\subseteq\tilde{G}$. Hence,
\begin{equation*}
\begin{split}
  \sum_{\substack{P\in\mathbb{P}\\ 2^k I_P\subseteq \tilde{G} \\ 2^{k+1}I_P \not\subseteq \tilde{G}}}
    &\abs{\pair{f}{\phi_P}\pair{\phi_P}{g1_{E_{P_u}}}} \\
  &\leq \sum_{\substack{I: 2^k I\subseteq \tilde{G} \\ 2^{k+1}I \not\subseteq \tilde{G}}}
     \sum_{\omega:\abs{\omega}=1/\abs{I}} 2^k\abs{I}^{1/2}\frac{\abs{F}}{\abs{E}}\cdot 2^{-10k}\abs{I}^{1/2}\int 1_{\omega_u}(N(x))v_I(x)\ud x \\
  &\leq  2^{-9k}\frac{\abs{F}}{\abs{E}}\sum_{\substack{I: 2^k I\subseteq \tilde{G} \\ 2^{k+1}I \not\subseteq \tilde{G}}}\abs{I}
     \int v_I(x)\ud x  \\
  &\lesssim  2^{-9k}\frac{\abs{F}}{\abs{E}}\sum_{\substack{I: 2^k I\subseteq \tilde{G} \\ 2^{k+1}I \not\subseteq \tilde{G}}}\abs{I}
  \lesssim  2^{-9k}\frac{\abs{F}}{\abs{E}}\abs{\tilde{G}}
  \leq 2^{-9k}\abs{F},
\end{split}
\end{equation*}
where we used among other things the disjointness of dyadic $\omega$ of equal length, and the fact that all the $I$ in the sum have bounded overlap and are all contained in $\tilde{G}$. This is summable over $k=1,2,\ldots,$ and shows that
\begin{equation*}
  \sum_{\substack{P\in\mathbb{P}\\ 2I_P\subseteq \tilde{G}}}\abs{\pair{f}{\phi_P}\pair{\phi_P}{g1_{E_{P_u}}}}
  \lesssim\abs{F},
\end{equation*}
which is even a better bound than what we claimed for the full sum over $P\in\mathbb{P}$.

\subsubsection*{The part with $2I_P\not\subseteq \tilde{G}$}
The point is to check the conditions of Corollary~\ref{cor:impEnergy}, which will give us an improved energy estimate for the collection of tiles appearing in this sum. The condition that $2I_P\not\subseteq\tilde{G}$ means that there exists some $y\in 2I_P\subsetneq\tilde{G}$, which, by the definition of $\tilde{G}$ means that
\begin{equation*}
  \frac{\abs{2I_P\cap G}}{\abs{2I_P}}\leq M(1_G)(z)\leq\frac18,
\end{equation*}
and hence $\abs{I_P\cap G}\leq \tfrac18\abs{2I_P}=\tfrac14\abs{I_P}$. In particular, $I_P\not\subseteq G$, which by the definition of $G$ means the existence of some $z\in I_P\setminus G$ so that $M(1_F)\leq K\abs{F}/\abs{E}$. Since $\abs{f}\leq 1_F$, we finally obtain
\begin{equation*}
  \inf_{I_P}Mf\leq M(1_F)(z)\leq K\frac{\abs{F}}{\abs{E}}.
\end{equation*}
This means that the collection
\begin{equation*}
  \mathbb{P}':=\{P\in\mathbb{P}: 2I_P\not\subseteq \tilde{G}\}
\end{equation*}
satisfies the assumption of Corollary~\ref{cor:impEnergy} with $\lambda=K\abs{F}/\abs{E}$, and hence also the conclusion,
\begin{equation*}
  \operatorname{energy}(\mathbb{P}')\lesssim\frac{\abs{F}}{\abs{E}}.
\end{equation*}

After this, we can proceed exactly as in the proof of Proposition~\ref{prop:largePCarleson} in the case $\abs{E}>\abs{F}$. For $\abs{f}\leq 1_F$ and $\abs{g}\leq 1_{\tilde{E}}\leq 1_E$, we have
\begin{equation*}
\begin{split}
  \sum_{P\in\mathbb{P}'}
  &\abs{\pair{f}{\phi_P}\pair{\phi_P}{ g 1_{\{N(\cdot)\in \omega_u\}}}} \\
  &\lesssim \sum_n\sum_j 
  \operatorname{density}(\mathbb{T}_{n,j})\operatorname{energy}(\mathbb{T}_{n,j})\abs{I_{\mathbb{T}_{n,j}}} \\
  &\lesssim \sum_n\min\{1,\abs{E}2^{-n}\}\min\{\frac{\abs{F}}{\abs{E}},\abs{F}^{\alpha/q}2^{-n\alpha/q}\}2^n,
\end{split}
\end{equation*}
where we substituted the improved energy bound from above in place of the universal energy bound $\operatorname{energy}(\mathbb{P})\lesssim 1$ valid for any collection. The estimation then proceeds with
\begin{equation*}
\begin{split}
  &\leq\sum_{2^n\leq\abs{E}}\frac{\abs{F}}{\abs{E}}2^n+\sum_{\abs{E}\leq 2^n\leq\abs{E}^{q/\alpha}\abs{F}^{1-q/\alpha}}\abs{F} \\
   &\qquad \qquad   +\sum_{2^n\geq\abs{E}^{q/\alpha}\abs{F}^{1-q/\alpha}} \abs{E}\abs{F}^{\alpha/q}2^{-n\alpha/q} \\
  &\lesssim \abs{F}\Big(1+\log\frac{\abs{E}}{\abs{F}}\Big),
\end{split}
\end{equation*}
as we wanted to prove.
\end{proof}

As explained in the beginning of the section, the above two lemmas together with well-known interpolation results complete the proof of Carleson's theorem for $L^p(\R;X)$ with any $p\in(1,\infty)$.

\bibliography{carleson}
\bibliographystyle{plain}

\end{document}